\newtheorem*{theorem*}{Theorem}
\newtheorem{prop}{Proposition}[section]
\newtheorem{thm}[prop]{Theorem}
\newtheorem{cor}[prop]{Corollary}
\newtheorem{lem}[prop]{Lemma}
\theoremstyle{definition}
\newtheorem{rem}[prop]{Remark}
\newtheorem{de}[prop]{Definition}
\newtheorem{ex}[prop]{Example}
\DeclareMathOperator{\Set}{Set}
\DeclareMathOperator{\id}{id}
\DeclareMathOperator{\Hom}{Hom}
\newcommand{\iso}{\mathrel{\overset{\cong}{\smash{\longrightarrow}\vrule height.3ex
width0ex\relax}}}
\renewcommand{\ker}{\kerr}
\DeclareMathOperator{\kerr}{ker}
\renewcommand{\lim}{\limi}
\DeclareMathOperator{\limi}{lim}
\DeclareMathOperator{\Ab}{Ab}
\DeclareMathOperator{\Fld}{Fld}
\DeclareMathOperator{\GL}{GL}
\DeclareMathOperator{\rk}{rk}
\DeclareMathOperator{\Frac}{Frac}
\DeclareMathOperator{\fg}{fg}
\DeclareMathOperator{\SH}{SH}
\DeclareMathOperator{\Sm}{Sm}
\DeclareMathOperator{\Op}{Op}
\DeclareMathOperator{\Spec}{Spec}
\DeclareMathOperator{\HHom}{\underline{\Hom}}
\DeclareMathOperator{\codim}{codim}
\DeclareMathOperator{\G}{\mathbb{G}_m}
\DeclareMathOperator{\K}{K}
\DeclareMathOperator{\KK}{\underline{K}}
\DeclareMathOperator{\II}{\underline{I}}
\DeclareMathOperator{\GGW}{\underline{GW}}
\DeclareMathOperator{\WW}{\underline{W}}
\DeclareMathOperator{\Pf}{Pf}
\DeclareMathOperator{\MM}{M}
\DeclareMathOperator{\W}{W}
\DeclareMathOperator{\MW}{MW}
\DeclareMathOperator{\Q}{Q}
\DeclareMathOperator{\RR}{R}
\DeclareMathOperator{\HH}{H}
\DeclareMathOperator{\Z}{\underline{\mathbb{Z}}}
\DeclareMathOperator{\Gn}{\mathbb{G}^{\wedge \textit{n}}_m}
\DeclareMathOperator{\Gm}{\mathbb{G}}
\renewcommand{\sp}{\spp}
\DeclareMathOperator{\spp}{sp}
\DeclareMathOperator{\st}{st}
\DeclareMathOperator{\ttt}{tt}
\renewcommand{\tt}{\ttt}
\DeclareMathOperator{\mwkx}{\mathrel{\overset{\oplus \partial_{\nu_y}^{\pi_y}}{\smash{\longrightarrow}\vrule height.9ex
width0ex\relax}}}
\DeclareMathOperator{\MGL}{MGL}
\DeclareMathOperator{\MSL}{MSL}
\DeclareMathOperator{\KGL}{KGL}
\DeclareMathOperator{\KQ}{KQ}
\DeclareMathOperator{\BO}{BO}
\DeclareMathOperator{\BGL}{BGL}
\tikzset{%
    symbol/.style={%
        draw=none,
        every to/.append style={%
            edge node={node [sloped, allow upside down, auto=false]{$#1$}}}
    }
}
\begin{document}

\title{Operations on Milnor-Witt K-theory}
\author{Thor Wittich}
\address{\hspace*{1.5em}Mathematisches Institut\\
Heinrich-Heine-Universit\"at D\"usseldorf\\
40204 D\"us\-sel\-dorf\\
Germany}
\email{thor.wittich@hhu.de}

\begin{abstract}
For all positive integers $n$ and all homotopy modules $M_*$, we define certain operations $\KK^{\MW}_n \rightarrow M_*$ and show that these generate the $M_*(k)$-module of all (in general non-additive) operations $\KK^{\MW}_n \rightarrow M_*$ in a suitable sense, if $M_*$ is $\mathbb{N}$-graded and has a ring structure. This also allows us to explicitly compute the abelian group $\Op(\KK_n^{\MW},\KK_m^{\MW})$ and all operations between related theories such as Milnor, Witt and Milnor-Witt K-theory.
\end{abstract}
\maketitle

{ \hypersetup{hidelinks} \tableofcontents }

\section{Introduction}
Operations between invariants such as (co-)homology theories have shown to be very useful, especially in algebraic topology and algebraic geometry. Famous examples include Adams' treatment of vector fields on spheres \cite{MR139178} and Adams' and Atiyah's proof of the Hopf invariant one problem \cite{MR198460}, both using the so-called Adams' operations $\psi^l \colon K \rightarrow K$ on topological K-theory. Also stable operations, i.e. families of operations on a (co-)homology theory which respect the suspension functor are of major interest. Such operations on mod $p$ motivic cohomology $H^{*,*}(-;\mathbb{Z}/p\mathbb{Z})$ are for example constructed and used by Voevodsky in his proof of the Bloch-Kato conjecture \cite{MR2031199},\cite{MR2031198} and \cite{MR2811603}, nowadays known as the norm residue isomorphism theorem. If $F$ is a field, the motivic cohomology group $H^{n,n}(\Spec(F);\mathbb{Z}/p\mathbb{Z})$ is given by mod $p$ Milnor K-theory $K^{\MM}_n(F) / p$ and by integral Milnor K-theory $K^{\MM}_n(F)$ in the case of integral coefficients. For these theories defined over field extensions of a fixed base field $k$, Vial \cite{MR2497581} determines the $M_*(k) /p$-module of all uniformly bounded operations $K^{\MM}_n / p \rightarrow M_*$ and the $M_*(k)$-module of uniformly bounded operations $K^{\MM}_n \rightarrow M_*$, where $M_*$ is a cycle module with ring structure such as Milnor K-theory or algebraic K-theory. In both cases these modules turn out to be spanned by divided power operations. Actually, Vial also lifts the respective computations to the case of operations defined for smooth schemes over $k$. Note that the explicit assumption of uniform boundedness was forgotten by the author, as also Garrel states in Section 10 of \cite{MR4113769} with respect to the $p = 2$ case. 
\\Let $k$ be a perfect base field of characteristic not $2$. In motivic homotopy theory, there is a bigraded family of spheres $S^{n,m} = (S^1)^{\wedge (n-m)} \wedge \Gm_m^{\wedge m}$, where $n$ is the topological degree and $m$ is the algebraic degree or weight. These spheres give rise to the motivic sphere spectrum $\mathbbm{1}_k$ whose homotopy sheaves $\underline{\pi}_{n,m}(\mathbbm{1}_{k})$ are of central interest to motivic, but also to classical stable homotopy theory. The latter being the case due to work of Levine \cite{MR3217623}, which in particular yields isomorphisms $\pi_n(\mathbbm{1}) \cong \underline{\pi}_{n,0}(\mathbbm{1}_{\mathbb{C}})(\Spec(\mathbb{C}))$, where $\mathbbm{1}$ denotes the usual sphere spectrum from stable homotopy theory. In the other direction, Morel showed in \cite{MR2061856} that for all integers $m$, the sheaf $\underline{\pi}_{-m,-m}(\mathbbm{1}_k)$ has a purely algebraic description, called Milnor-Witt K-theory $\KK^{\MW}_m$ in degree $m$. By further work of Morel in \cite{MR2099118}, it arises as a pullback  square
\begin{center}
\begin{tikzcd}
 \KK^{\MW}_m \arrow[r] \arrow[d] & \KK^{\MM}_m \arrow[d]\\
 \II^m \arrow[r] & \KK^{\MM}_m / 2
\end{tikzcd}
\end{center}
of Nisnevich sheaves on $\Sm \! / k$, where $\II$ is the fundamental ideal of the Witt ring of quadratic forms. This allows us to view Milnor-Witt K-theory as a quadratic refinement of Milnor K-theory and also explains the name Milnor-Witt K-theory based on the fact that $\II^m$ also has an algebraic description called Witt K-theory, see \cite{MR2099118}. This leads to the natural question whether Vial's aforementioned results can be generalized to Milnor-Witt K-theory. The main goal of this paper is to explore and prove such generalizations. Our main strategy is to make tools of Garrel from the theory of quadratic forms available for Milnor-Witt K-theory, which we will describe now.

In \cite{MR4113769}, Garrel computes the modules of all operations $I^n \rightarrow W$ and $I^n \rightarrow H^*(-;\mu_2)$ defined over field extensions of a fixed base field $k$ of characteristic not $2$. Here also the natural filtrations of $W$ and $H^*(-;\mu_2)$ are respected, which therefore in particular gives all operations $I^n \rightarrow I^m$. Garrel's strategy relies on Theorem 18.1 of Serre \cite{MR1999384}, which describes all operations $\Pf_n \rightarrow W$ and $\Pf^n \rightarrow  H^*(-;\mu_2)$ as free modules of rank $2$. Here $\Pf_n$ are isomorphism classes of $n$-Pfister forms, i.e., the generators of $I^n$. Garrel then defines so-called shifts, which measure how an operation on $I^n$ changes when adding and subtracting generators $x \in \Pf_n$. In other words, the idea is to start with operations on generators and extend these to the entire theory, even though the operations must not be additive. This method is not available for Milnor-Witt K-theory, since operations on generators of Milnor-Witt K-theory are not known. This subsheaf of generators will be denoted by $[-_1,\dotsc,-_n]$ and our first main result clarifies operations on $[-_1,\dotsc,-_n]$.
\begin{theorem*}[see Theorem \ref{Operations on (K_1^M)^r that factorize through the multiplication map}]
For any homotopy module $M_*$ with ring structure and any positive integer $n$, the $M_*(k)$-module of operations $[-_1,\dotsc,-_n] \rightarrow M_*$ is free of rank $2$ generated by the constant operation $1$ and the action of $[-_1,\dotsc,-_n]$ on $1 \in M_*(k)$.
\end{theorem*}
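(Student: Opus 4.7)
The plan is to reduce any operation $\phi \colon [-_1, \dotsc, -_n] \to M_*$ to its universal value $U := \phi_E([t_1, \dotsc, t_n]) \in M_*(E)$ over $E = k(t_1, \dotsc, t_n)$, decompose $U$ using the homotopy-module structure of $M_*$ on rational function fields, and use naturality to show that only two kinds of components can survive. I would start by extracting a constant: since $(1, \dotsc, 1) \in (k^\times)^n$ maps to $[1, \dotsc, 1] = 0$ in $[-_1, \dotsc, -_n](k)$, the element $a := \phi_k(0) \in M_*(k)$ is well-defined and equal to $\phi_F(0)$ for every extension $F/k$ by naturality. Subtracting the constant operation $a \cdot \mathbf{1}$, one reduces to showing that any operation $\psi$ with $\psi(0) = 0$ is of the form $\psi([u_1, \dotsc, u_n]) = b \cdot [u_1, \dotsc, u_n] \cdot 1$ for a unique $b \in M_*(k)$.

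For the main step, I would apply the iterated Gersten-type short exact sequences
\[
0 \to M_*(F) \to M_*(F(t)) \to \bigoplus_{x \in (\mathbb{A}^1_F)^{(1)}} M_{*-1}(F(x)) \to 0,
\]
together with the splittings obtained by specialising $t = 1$, in order to write the universal value $U$ as a sum indexed by subsets $S \subseteq \{1, \dotsc, n\}$ of ``active'' variables and residue data in the remaining variables. Naturality under the substitution $t_i \mapsto 1$ sends $[t_1, \dotsc, t_n]$ to $0$ and therefore annihilates $U$, forcing the vanishing of every component indexed by a proper subset $S$. The surviving top-level component is then analysed using naturality with respect to the scalings $t_i \mapsto u\,t_i$ together with the Milnor--Witt identity $[u t_i] = [u] + \langle u \rangle [t_i]$; this should pin it down to the rank-one $M_*(k)$-submodule generated by $[t_1, \dotsc, t_n] \cdot 1$, yielding the scalar $b$. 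Linear independence of the two generators is then immediate: evaluating at $0$ isolates $\mathbf{1}$, and applying the iterated residues $\partial_{t_n} \circ \dotsb \circ \partial_{t_1}$ (with uniformisers $t_i$) to $b \cdot [t_1, \dotsc, t_n] \cdot 1$ recovers the coefficient $b \in M_*(k)$.

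The main obstacle is controlling the surviving top component of $U$. While the specialisations $t_i = 1$ cleanly kill the lower-dimensional components, pinning down the top component requires a careful interplay between the residue maps on $M_*$, the ring structure on $M_*$, and the Steinberg-type relations that force $\psi$ to descend from $(\G)^n$ to the quotient $[-_1, \dotsc, -_n]$. I therefore expect an induction on $n$ to be the cleanest route: the base case $n = 1$ should reduce to identifying operations $\G \to M_*$ with $M_*(k[t^{\pm 1}]) = M_*(k) \oplus [t] \cdot M_{*-1}(k)$ (the map $\G \to [-]$ being a bijection, no descent condition is needed), and the inductive step would fix all but one of the variables generically, reducing the residue analysis in the last variable to the rank-one base case while tracking how the universal value in the remaining variables enters as a scalar.
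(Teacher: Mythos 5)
The central gap is in your main step: the decomposition of the universal value $U=\psi_E([t_1,\dotsc,t_n])$ ``indexed by subsets of active variables'' does not exist until you have shown that $U$ is unramified away from the divisors $t_i=0$ (and $\infty$). The Milnor exact sequence for $M_*(F(t))$ has residue summands at \emph{all} closed points of $\mathbb{A}^1_F$, and the only constraints you actually impose on $U$ are the $n$ conditions $s_{t_i=1}(U)=0$ coming from $[t_1,\dotsc,1,\dotsc,t_n]=0$. These conditions do not confine $U$ to $M_*(k)\oplus[t_1,\dotsc,t_n]M_*(k)$: already for $n=1$ the class $U=[2-t]\,c$ with $0\neq c\in M_{*-1}(k)$ satisfies $s_{t=1}(U)=[1]c=0$ but has nonzero residue at $t=2$, hence is not of the form $b+[t]a$. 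Ruling out such ramified contributions is exactly where commutation with specializations at \emph{arbitrary} closed points must be used, and it is not formal: in the paper this is Step 2 of the proof of Theorem \ref{Operations on (K_1^M)^r} (the Vial-type structure theorem), which base-changes to $k(u)$, compares the specializations $s^{-up}_{\nu_p}$ and $s^{p}_{\nu_p}$ via Proposition \ref{Properties of residue und specialization maps}(iii), and then applies $\partial^{-u}_{\nu_{-u}}$ to extract $\partial^{p}_{\nu_p}(\varphi(\lbrace t\rbrace))=0$. Your proposal never invokes specializations other than $t_i\mapsto 1$ (and the final residues at $t_i=0$), so the assertion that ``the specialisations $t_i=1$ cleanly kill the lower-dimensional components'' is unjustified, and the subsequent analysis of the ``top component'' by scalings $t_i\mapsto ut_i$ rests on a decomposition you have not established. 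Your one-variable base case has the same issue: identifying operations $\G\rightarrow M_*$ with $M_*(k)\oplus[t]M_{*-1}(k)$ is precisely the unramifiedness statement, not something that follows from evaluating on $k[t^{\pm1}]$.

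Apart from this, your architecture is reasonable and partly parallels the paper: the paper first proves the full structure theorem on $(K^{\MM}_1)^n$ (free of rank $2^n$ on the operations $[-_{i_1},\dotsc,-_{i_l}]$) and then, by induction, isolates those combinations factoring through the universal symbol, killing the mixed term $[-_{n+1}]y$ with the relation $[t,t]=[t,-1]$ together with Proposition \ref{[t]x = 0}. Your idea of killing all proper-subset terms by substituting $t_i=1$ and using $[\dotsc,1,\dotsc]=0$ is a perfectly good alternative to that last step \emph{once} the structure theorem (or at least the unramifiedness of $U$) is in hand, and your residue-based proof of linear independence matches the paper's. So the proposal is salvageable, but only by adding the unramifiedness argument, which is the real content of the theorem you are implicitly assuming.
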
 
This allows us to define Garrel's shifts for operations on Milnor-Witt K-theory, but they are not as nicely behaved as in the case of quadratic forms, which mainly comes from the fact that Milnor-Witt K-theory is not commutative. 

In the case of quadratic forms, quite some operations are known, namely the pre-$\lambda$ operations of McGarraghy \cite{MR1901274} which come from exterior powers of quadratic forms. As shown by Garrel \cite{MR4113769}, these turn out to generate all operations in a suitable sense. So both for Milnor K-theory and for quadratic forms there are certain operations which are defined quite similarly and end up generating all other operations. There is also the recent preprint \cite{2209.08634} of Totaro in which a comparison of divided and exterior powers of quadratic forms is given. Additionally, it is shown, that these divided power operations are in fact compatible with the ones of Vial \cite{MR2497581} on Milnor K-theory mod $2$.
All in all, the natural guess is of course that also the operations on Milnor-Witt K-theory are spanned by such kinds of operations. This leads us to define operations $\lambda^n_l \cdot y \colon K^{\MW}_n \rightarrow M_*$  for every homotopy module $M_*$, which map sums of symbols
$$x = [a_{1,1},\dotsc,a_{1,n}] + \dotsc + [a_{r,1},\dotsc,a_{r,n}]$$
to the action of  
$$\sum_{1 \leq i_1 < \dotsc < i_l \leq r} [a_{i_1,1},\dotsc,a_{i_1,n}] \cdot \dotsc \cdot [a_{i_l,1},\dotsc,a_{i_l,n}]$$
on an element $y \in M_*(k)$. Such operations are not always well-defined. If $n$ is odd, the element $y$ needs to be $h$-torsion, which we denote by $y \in \prescript{}{h} M_*(k)$ (see Proposition \ref{Definition of Lambda} and Definition \ref{Definition of lamda}). Now that we also have some natural candidates for generators of all operations, we continue translating Garrel's strategy to Milnor-Witt K-theory. For this to work we restrict to $\mathbb{N}$-graded homotopy modules with ring structure $M_*$, which we call $\mathbb{N}$-graded homotopy algebras. Furthermore we consider certain linear combinations of operations of the form $\lambda^n_d$, namely
$$\sigma_l^n = \sum^{\lfloor \frac{l-1}{2} \rfloor}_{j = 0} \binom{\lfloor \frac{l-1}{2} \rfloor }{j}[-1]^{n(l-j)}\lambda_{l-j}^n$$
for all integers $l \geq 1$ and $\sigma_0^n = \lambda_0^n$
(see before Proposition \ref{Derivatives of new Lambdas}). Our next main result is the following.
\begin{theorem*}[see Theorem \ref{Operations K_n^MW -> M_*}]
Let $n$ be a positive integer. For all $\mathbb{N}$-graded homotopy algebras $M_*$, the map
$$f \colon M_*(k)^2 \times \prescript{}{\delta_n h} M_*(k)^{\mathbb{N} \setminus \lbrace 0,1 \rbrace} \rightarrow \Hom(\KK^{\MW}_n, M_*), \, (a_l)_{l \geq 0} \mapsto \sum_{l \geq 0} \sigma_l^n \cdot a_l$$
is an isomorphism of $M_*(k)$-modules, where $\delta_n $ is $1$ if $n$ is odd and $0$ if $n$ is even.
\end{theorem*}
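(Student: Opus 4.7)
The plan is to prove injectivity and surjectivity of $f$ separately, with surjectivity reduced by an inductive shift argument to the classification of operations on single symbols furnished by Theorem \ref{Operations on (K_1^M)^r that factorize through the multiplication map}.

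\textbf{Injectivity.} Suppose $\sum_{l \geq 0} \sigma_l^n \cdot a_l = 0$ as an operation. We evaluate on sums of symbols $x_1 + \ldots + x_r$ with $x_i = [a_{i,1},\ldots,a_{i,n}]$. Since $\lambda^n_l$, and hence $\sigma_l^n$, vanishes on sums of fewer than $l$ generators, the case $r = 0$ immediately gives $a_0 = 0$. Inductively, once $a_0, \ldots, a_{r-1}$ are known to vanish, the $r$-th evaluation reduces to an equation whose leading term is proportional to the action of the product $x_1 \cdots x_r$ on $a_r$; varying the $x_i$ so that this product exhausts the relevant degree of $\KK^{\MW}_*(k)$ forces $a_r = 0$.

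\textbf{Surjectivity.} Given $\phi \colon \KK^{\MW}_n \to M_*$, we build $(a_l)_{l \geq 0}$ recursively so that $\phi_l := \phi - \sum_{l' < l} \sigma_{l'}^n \cdot a_{l'}$ vanishes on sums of at most $l - 1$ symbols. For $l = 1$, the restriction of $\phi$ to the subsheaf $[-_1,\ldots,-_n]$ of single symbols is by Theorem \ref{Operations on (K_1^M)^r that factorize through the multiplication map} uniquely a combination of the constant operation and the action of a symbol, yielding $a_0, a_1 \in M_*(k)$. For $l \geq 2$, fix a sum $y = x_1 + \ldots + x_{l-1}$ and consider $\psi(x) := \phi_l(y + x)$ as an operation on $[-_1,\ldots,-_n]$. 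Combining the rank $2$ classification with Proposition \ref{Derivatives of new Lambdas}, which records the shift of $\sigma_l^n$ under adjoining a new symbol, singles out a well-defined $a_l \in M_*(k)$ for which $\phi_l$ agrees with $\sigma_l^n \cdot a_l$ on sums of $l$ symbols. For odd $n$, Proposition \ref{Definition of Lambda} forces $a_l$ to be $h$-torsion, matching the $\delta_n h$-torsion condition in the domain of $f$.

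\textbf{Main obstacle.} The hardest ingredient is managing the shift behavior of the $\sigma_l^n$. Unlike Garrel's setting for quadratic forms, Milnor-Witt K-theory is non-commutative and the naive expansion of $\lambda^n_l(x + [y_1,\ldots,y_n])$ produces correction terms in $[-1]$ and $h$ that do not vanish. The binomial coefficients $\binom{\lfloor (l-1)/2 \rfloor}{j}$ in the definition of $\sigma_l^n$ are tailored so these corrections telescope into a clean derivative formula; verifying this telescoping and ensuring no relations arise beyond the stated $\delta_n h$-torsion condition is the technical core of the argument.
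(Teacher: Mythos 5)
Your plan breaks down at the point where you transfer the vanishing property of $\lambda^n_l$ to $\sigma^n_l$. It is true that $\lambda^n_l$ kills sums of fewer than $l$ pure symbols, but $\sigma^n_l$ is a combination of $\lambda^n_j$ for $\lfloor l/2\rfloor +1 \leq j \leq l$; for instance $\sigma^n_3 = \lambda^n_3 + [-1]^n\lambda^n_2$ does \emph{not} vanish on $x_1+x_2$. Consequently, evaluating $\sum_{l}\sigma^n_l\cdot a_l = 0$ on positive sums of $r$ generic symbols does not isolate $a_r$: after $a_0=a_1=0$ you only obtain band-shaped relations such as $a_2+[-1]^n a_3=0$ and $a_3+[-1]^n a_4+[-1]^{2n}a_5=0$, and such a system has nonzero formal solutions, so positive sums alone cannot prove injectivity. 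One genuinely needs differences of symbols as well; this is exactly why the paper's inverse $g$ evaluates the mixed shifts $\varphi^{(+\lfloor \frac{l+1}{2}\rfloor,-\lfloor \frac{l}{2}\rfloor)}$ at $0$, and why the binomial coefficients in $\sigma^n_l$ are calibrated (Proposition \ref{Derivatives of new Lambdas}) so that precisely these mixed positive/negative shifts return $a_l$.

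The surjectivity sketch has the parallel problem and several omissions. Controlling $\phi_l$ on positive sums of at most $l-1$ symbols does not pin down the operation: unlike Milnor K-theory, $-[\underline{b}]$ is not a pure symbol in $K^{\MW}_n$, so general elements require differences. You also need the recursively chosen $a_l$ to be independent of the auxiliary element $y$ and to lie in $M_*(k)$ rather than $M_*(F)$ (Theorem \ref{Operations on (K_1^M)^r that factorize through the multiplication map} applied over $F$ only yields coefficients in $M_*(F)$), and you never use the $\mathbb{N}$-grading: the paper needs the separated filtration $F_dM_*=M_{\geq d}$ both to show that an operation annihilated by the shifts is constant (Proposition \ref{ker of shifts}) and to pass from generation modulo each $F_d$ (Proposition \ref{"locally" gen by div powers}) to the actual identity $\varphi=\sum_{l}\sigma^n_l\cdot a_l$; without some such convergence argument your recursion only gives agreement on a class of elements, not equality of operations. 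Two further points: the well-definedness of $f$ (finiteness of the sum on every element) is a separate lemma in the paper and is unaddressed, and the necessity of the $h$-torsion condition on $a_l$ for $l\geq 2$ does not follow from Proposition \ref{Definition of Lambda} (which only gives sufficiency) but from the analogue of Lemma \ref{Properties of derivatives}(ii).
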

Actually, we also have an appropriate filtration on both sides, which this isomorphism also respects. Here the right hand side is equipped with the natural filtration induced by the filtration $F_dM_* = M_{\geq d}$. It is not completely obvious that the map $f$ is well-defined, i.e. that infinite sums of the form $\sum_{l \geq 0} \sigma_l^n \cdot a_l$ with suitable coefficients $(a_l)_{l \geq 0}$ make sense. For this we show that while these sums are infinite, they become finite when evaluating at any element of $\KK^{\MW}_n$.

We then also recover and generalize the aforementioned results of Garrel and Vial, which leads to the following result.
\begin{theorem*}[see Theorem \ref{Operations between homotopy modules}]
Let $n$ be a positive integer. Table 1 gives a complete list of operations of type $(n,m)$ between Milnor, Witt and Milnor-Witt K-theory.
\end{theorem*}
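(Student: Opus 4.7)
The plan is to deduce all nine entries of Table 1 from the main classification theorem (Theorem Operations $\KK^{\MW}_n \to M_*$) by combining it with the quotient presentations of Milnor and Witt K-theory, both as targets and as sources of operations.

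First, observe that each of the target theories $K^{\MM}_*$, $\II^*$, $\KK^{\MW}_*$ is an $\mathbb{N}$-graded homotopy algebra, so the main theorem applies and gives
\[
\Hom(\KK^{\MW}_n, M_*) \;\cong\; M_*(k)^2 \times \prescript{}{\delta_n h} M_*(k)^{\mathbb{N}\setminus\{0,1\}},
\]
with the operations being $\sum_{l\geq 0}\sigma^n_l \cdot a_l$. Since $\sigma^n_l \cdot a_l$ lies in degree $\deg(a_l)+ln$, restricting to the degree-$m$ piece of $M_*$ yields operations of type $(n,m)$ out of $\KK^{\MW}_n$; and because each target is $\mathbb{N}$-graded, only finitely many indices $l\leq m/n$ can contribute, so the infinite sum is actually finite on each $M_*$.

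Next, make the $h$-torsion condition explicit for each target. In $K^{\MM}_*(k)$ the element $h$ acts as multiplication by $2$, so $\prescript{}{h} K^{\MM}_*(k)$ is the $2$-torsion subgroup. In Witt K-theory $\II^*(k)$ the class of $h$ vanishes, because the hyperbolic form is zero in the Witt ring, so every element is $h$-torsion. In $\KK^{\MW}_*(k)$ the $h$-torsion is read off from Morel's pullback square, giving the standard description in terms of $K^{\MM}_*(k)$ and $\II^*(k)$. This pins down the three rows of Table 1 with source $\KK^{\MW}_n$.

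For operations whose source is $K^{\MM}_n$ or $\II^n$, I would use the canonical surjections $\KK^{\MW}_n \twoheadrightarrow K^{\MM}_n$ and $\KK^{\MW}_n \twoheadrightarrow \II^n$, whose kernels are generated by $\eta$-multiples and by $h$-multiples, respectively. Operations out of $K^{\MM}_n$ (resp.\ $\II^n$) then correspond bijectively to operations out of $\KK^{\MW}_n$ that vanish on the corresponding kernel. Using the derivative formulas for the $\sigma^n_l$ (Proposition Derivatives of new Lambdas in the excerpt), one can check directly on kernel generators which tuples $(a_l)_{l\geq 0}$ produce operations descending to the quotient, giving the remaining six entries of Table 1.

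The main obstacle will be Step 3: evaluating $\sigma^n_l \cdot a_l$ on kernel generators and translating the vanishing conditions into explicit constraints on the coefficients $a_l$. This requires careful bookkeeping of how $\eta$ and $h$ interact with the divided-power-type operations $\lambda^n_l$ entering the definition of $\sigma^n_l$, together with a separate analysis for even and odd $n$ dictated by the factor $\delta_n$ in the main theorem. Once these computations are carried out for each combination of source and target, assembling them into the $3\times 3$ table is a direct bookkeeping exercise.
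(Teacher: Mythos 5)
Your overall architecture matches the paper's (specialize the classification theorem to the three targets, then treat the quotient sources by a descent analysis using the shifts), but two steps as written do not go through. First, neither Witt nor Milnor--Witt K-theory is an $\mathbb{N}$-graded homotopy algebra: $K^{\W}_{m}=W\neq 0$ for $m\leq 0$ and $K^{\MW}_{m}\cong W\neq 0$ for $m<0$, and the degree filtration on $\KK^{\MW}_*$ is not by ideals (multiplication by $\eta$ lowers degree). So Theorem \ref{Operations K_n^MW -> M_*} does not apply to these targets as you claim. The paper handles the Witt-valued case by a separate remark (the arguments work for $W$ with the separated filtration by powers of $I$), and it never applies the classification directly to the target $\KK^{\MW}_m$: the rows of Table 1 with target $\KK^{\MW}_m$ are obtained, as in Corollary \ref{Operations K^MW -> K^MW}, by writing $K^{\MW}_m$ as the pullback of $K^{\W}_m\rightarrow K^{\MM}_m/2\leftarrow K^{\MM}_m$ and using that operation modules into a pullback form the pullback. ``Reading off the $h$-torsion from Morel's square'' does not replace this, since the issue is not computing $\prescript{}{h}K^{\MW}_*(k)$ but justifying that operations into $\KK^{\MW}_m$ have the claimed shape at all. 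Relatedly, your finiteness claim ``only finitely many $l\leq m/n$ contribute'' is false for the Witt and Milnor--Witt targets: Table 1 has genuinely infinite products there because $K^{\W}_{m-nl}(k)=W(k)$ for $nl\geq m$; the infinite sums make sense because $\sigma^n_l\cdot a_l(x)=0$ for $l$ large depending on $x$, not by degree truncation.

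Second, for the quotient sources the correct correspondence is not ``operations vanishing on the kernel'' (the operations are non-additive) but operations invariant under translation by the kernel, $\varphi(x+hK^{\MW}_n(F))=\varphi(x)$ resp. $\varphi(x+\eta K^{\MW}_{n+1}(F))=\varphi(x)$. You later say ``descending to the quotient,'' which is the right condition, but the content of the paper's argument is exactly the computation you defer: Lemma \ref{Operations when adding hK^MW_*} and Lemma \ref{Operations when adding etaK^MW_*} give $\varphi(x\pm h[\underline{a}])=\varphi(x)\pm h[\underline{a}]\varphi^{(\pm)}(x)$ and $\varphi(x\pm\eta[a,b,\underline{c}])=\varphi(x)-[a,b,\underline{c}][-1]^{n-1}\varphi^{(\mp 2)}(x)$ (the latter only when $\eta$ acts trivially on the target), and the converse direction uses Proposition \ref{[t]x = 0} to convert invariance into the torsion constraints $\prescript{}{h}$, $\prescript{}{2}$, $\prescript{}{\tau_n}$ on the coefficients appearing in Table 1. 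Without carrying this out (and without invoking Proposition \ref{Morphisms between Unramified Sheaves} to pass from $\Op_{\sp}$ over finitely generated fields to morphisms of sheaves), the specific entries of the table are not established; as it stands the proposal would fail for all rows with target $\KK^{\W}_m$ or $\KK^{\MW}_m$ and leaves the six quotient-source rows unproven.
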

\begin{center}
Table 1
\end{center}
\begin{center}
 \def\arraystretch{1.5}
  \begin{tabular}{ p{0.8cm} p{0.8cm} p{10.17cm}@{}  }
     $A$ & $B$ & $\Hom(A,B)$ \\ \midrule
   $\KK^{\MM}_n$ & $\KK^{\MM}_m$ & $\displaystyle{\Bigl\{ \sum_{l \geq 0} \overline{\sigma}_l^n \cdot a_l \Bigm\lvert (a_l)_l \in \hspace*{-9pt}\prod_{\min(\frac{m}{n},1) \geq l \geq 0} \hspace*{-9pt}K^{\MM}_{m-nl}(k) \times \hspace*{-5pt}\prod_{\frac{m}{n}\hspace*{-1pt} \geq l \geq 2} \prescript{}{\delta_n 2}{ (\prescript{}{\tau_n} K_{m-nl}^{\MM}(k) )}  \Bigr\}}$ \\
   $\KK^{\MM}_n$ & $\KK^{\W}_m$ & $\displaystyle{\Bigl\{ \sum_{l \geq 0} \overline{\sigma}_l^n \cdot a_l \Bigm\lvert (a_l)_l \in \prod_{l = 0}^1 K^{\W}_{m-nl}(k) \times \prod_{l \geq 2} \,\prescript{}{\delta_n 2}{ (\prescript{}{\tau_n} K_{m-nl}^{\W}(k) )}  \Bigr\}}$ \\ 
   $\KK^{\MM}_n$ & $\KK^{\MW}_m$ & $\displaystyle{\Bigl\{ \sum_{l \geq 0} \overline{\sigma}_l^n \cdot a_l \Bigm\lvert (a_l)_l \in \prod_{l = 0}^1 K^{\MW}_{m-nl}(k) \times \prod_{l \geq 2} \,\prescript{}{\delta_n 2}{ (\prescript{}{\tau_n} K_{m-nl}^{\MW}(k) )}  \Bigr\}}$ \\
   $\KK^{\W}_n$ & $\KK^{\MM}_m$ &$\displaystyle{\Bigl\{ \sum_{l \geq 0} \overline{\sigma}_l^n \cdot a_l \Bigm\lvert (a_l)_l \in K^{\MM}_m(k) \times \prod_{\frac{m}{n}\hspace*{-1pt} \geq l \geq 1} \,\prescript{}{2}K^{\MM}_{m-nl}(k) \Bigr\}}$ \\
   $\KK^{\W}_n$ & $\KK^{\W}_m$ & $\displaystyle{\Bigl\{ \sum_{l \geq 0} \overline{\sigma}_l^n \cdot a_l \Bigm\lvert (a_l)_l \in \prod_{l \geq 0} K^{\W}_{m-nl}(k) \Bigr\}}$ \\ 
   $\KK^{\W}_n$ & $\KK^{\MW}_m$ & $\displaystyle{\Bigl\{ \sum_{l \geq 0} \overline{\sigma}_l^n \cdot a_l \Bigm\lvert (a_l)_l \in K^{\MW}_m(k) \times \prod_{l \geq 1} \,\prescript{}{h}K^{\MW}_{m-nl}(k) \Bigr\}}$ \\
   $\KK^{\MW}_n$ & $\KK^{\MM}_m$ & $\displaystyle{\Bigl\{ \sum_{l \geq 0} \sigma_l^n \cdot a_l \Bigm\lvert (a_l)_l \in \prod_{\min(\frac{m}{n},1) \geq l \geq 0} K^{\MM}_{m-nl}(k) \times \hspace*{-5pt}\prod_{\frac{m}{n}\hspace*{-1pt} \geq l \geq 2} \,\prescript{}{\delta_n 2}K^{\MM}_{m-nl}(k) \Bigr\}}$ \\
   $\KK^{\MW}_n$ & $\KK^{\W}_m$ & $\displaystyle{\Bigl\{ \sum_{l \geq 0} \sigma_l^n \cdot a_l \Bigm\lvert (a_l)_l \in \prod_{l \geq 0} K^{\W}_{m-nl}(k) \Bigr\}}$ \\ 
   $\KK^{\MW}_n$ & $\KK^{\MW}_m$ & $\displaystyle{\Bigl\{ \sum_{l \geq 0} \sigma_l^n \cdot a_l \Bigm\lvert (a_l)_l \in \prod_{l = 0}^1 K^{\MW}_{m-nl}(k) \times \prod_{l \geq 2} \,\prescript{}{\delta_n h}K^{\MW}_{m-nl}(k) \Bigr\}}$ \\
  \end{tabular}
\end{center}
Here $\tau_n$ is the action of $[-1]^{n-1}$ on the respective homotopy module. 

The paper is organized as follows: In Section \ref{Section 2} we state our  conventions and give a list of some of our notations. Section \ref{Section 3} is an introduction to Milnor-Witt K-theory. We start by defining it for fields and list all relations that we consider essential. Afterwards we discuss generators and recall how to lift Milnor-Witt K-theory from fields to smooth schemes. Here we also explain how Milnor-Witt K-theory arises as a pullback of other K-theories. In Section \ref{Section 4}, we give a quick recollection of homotopy modules and some of their properties. Now that the preliminaries are done, we quickly compute the additive and $\Gm_m$-stable operations on Milnor-Witt K-theory in Section \ref{Section 5}, using the fact that Milnor-Witt K-theory in degree $n \geq 1$ is the free strictly $\mathbb{A}^1$-invariant sheaf on $\Gm_m^{\wedge n}$. In Section \ref{Section 6} we define those operations which in a suitable sense turn out to generate all operations on Milnor-Witt K-theory. In Section \ref{Section 7} we compute all operations on generators of Milnor-Witt K-theory, which allows us to define Garrel's shifts in our setting. Afterwards we start investigating their properties. Section \ref{Section 8} deals with the computation of all operations on Milnor-Witt K-theory based on the results from the previous sections. Finally, we recover the aforementioned results of Garrel and Vial in Section \ref{Section 9}.

\subsection*{Acknowledgement} This research was conducted in the framework of the research training group 2240: Algebro-geometric Methods in Algebra, Arithmetic and Topology. I would like to thank my supervisor Immanuel Halup\-czok for having interest in my work even though it is not related to his own research interests and, of course, for his help. I also wish to thank Marcus Zibrowius for bringing the problem of computing operations on Milnor-Witt K-theory to our attention. Finally, I thank Marcus Zibrowius, Matthias Wendt and Jan Hennig for fruitful discussions at various stages of this project and Holger Kammeyer for some useful comments.

\section{Conventions and Notations}\label{Section 2}
Throughout the article we let $k$ be a perfect base field of characteristic not $2$. Furthermore we assume all schemes to be separated and of finite type over $k$ and rings are not necessarily commutative. For the convenience of the reader, we give the following table of notations:
\begin{center}
\begin{tabular}{ l l l }
$\Fld_k$ & & Category of field extensions of $k$\\
$\Fld^{\fg}_k$ & & Category of finitely generated field extensions of $k$\\
$K_*^{\MM}$ & & Milnor K-theory as a functor on $\Fld \! / k$ or $\Fld^{\fg} \! / k$\\
$K_*^{\W}$ & & Witt K-theory as a functor on $\Fld \! / k$ or $\Fld^{\fg} \! / k$\\
$K_*^{\MW}$ & & Milnor-Witt K-theory as a functor on $\Fld \! / k$ or $\Fld^{\fg} \! / k$\\
$GW$ & & Grothendieck-Witt ring as a functor on $\Fld \! / k$ or $\Fld^{\fg} \! / k$\\
$W$ & & Witt ring as a functor on $\Fld \! / k$ or $\Fld^{\fg} \! / k$\\
$I$ & & Fundamental Ideal as a functor on $\Fld \! / k$ or $\Fld^{\fg} \! / k$\\
$\Sm \! / k$ & & Category of smooth schemes over $k$ with the Nisnevich topology\\
$\Set \! / k$ & & Category of sheaves (of sets) on $\Sm \! /k$\\ 
$\Set_* \! / k$ & & Category of sheaves of pointed sets on $\Sm \! /k$\\
$\Ab \! / k$ & & Category of abelian sheaves on $\Sm \! /k$\\ 
$\Ab_{\mathbb{A}^1} \! / k$ & & Category of strictly $\mathbb{A}^1$-invariant sheaves on $\Sm \! /k$\\
$\Z_{\mathbb{A}^1}[X]$ & & Free strictly $\mathbb{A}^1$- invariant sheaf on $X$\\
$\tilde{\Z}_{\mathbb{A}^1}[X]$ & & Reduced free strictly $\mathbb{A}^1$- invariant sheaf on $X$\\
$\SH(k)$ & & Motivic stable homotopy category over $k$\\
$\Pi_*(k)$ & & Category of homotopy modules over $k$\\
$\KK_*^{\MM}$ & & Milnor K-theory as a homotopy module\\
$\KK_*^{\W}$ & & Witt K-theory as a homotopy module\\
$\KK_*^{\MW}$ & & Milnor-Witt K-theory as a homotopy module\\
$\GGW$ & & Grothendieck-Witt ring as an unramified sheaf on $\Sm \! / k$\\
$\WW$ & & Witt ring as an unramified sheaf on $\Sm \! / k$\\
$\II$ & & Fundamental Ideal as an unramified sheaf on $\Sm \! / k$\\
$\prescript{}{x} M_*$ & & $x$-torsion of some homotopy module $M_*$\\
$\delta_n$ & & $1$ if $n$ is odd and $0$ if $n$ is even\\
$\Op_{\sp}$ & & Operations on field extensions commuting with specialization maps \\
$\tau_n$ & & action of $[-1]^{n-1}$ on some homotopy module \\

\end{tabular}
\end{center}
\section{Recollection of Milnor-Witt K-theory}\label{Section 3}
Let us quickly recall some basics of Milnor-Witt K-theory. For more details we refer to Chapters 3.1 and 3.2 of \cite{MR2934577}.
\begin{de}
The Milnor-Witt K-theory ring $K^{\MW}_*(F)$ of a field $F$ is the free unital $\mathbb{Z}$-graded ring generated by symbols $[a]$ of degree $1$ for all $a \in F^\times$ and a symbol $\eta$ of degree $-1$ subject to the following relations:
\begin{enumerate}[leftmargin=1.89cm]
\item[(MW1)] $[a][1-a] = 0$ for all $a \in F^\times \setminus \lbrace 1 \rbrace$ \hspace*{\fill} (Steinberg relation);
\item[(MW2)] $[ab] = [a] + [b] + \eta[a][b]$ for all $a,b \in F^\times$\hspace*{\fill} (Twisted tensor relation);
\item[(MW3)] $\eta [a] = [a] \eta$ for all $a \in F^\times$;
\item[(MW4)] $\eta(2 + \eta[-1]) = 0$\hspace*{\fill} (Witt relation).
\end{enumerate}
\end{de}
So an arbitrary element of degree $n$ in $K^{\MW}_*(F)$ is a sum of elements of the form $\eta^d[a_1,\dotsc,a_r]$ with $r - d = n$,  where we denote the product $[a_1] \cdot \dotsc \cdot [a_r]$ by $[a_1,\dotsc,a_r]$.  Furthermore we set $\langle a \rangle = 1 + \eta[a] \in K_0^{\MW}(F)$ for all $a \in F^\times$, $h = \langle 1 \rangle + \langle -1 \rangle$ and $\epsilon = -\langle -1 \rangle$. A summary of essential relations is:
\begin{lem}\label{List of relations of Milnor-Witt K-theory}
We have the following:
\begin{enumerate}
\item[(i)] $0 = [1]$ and $1 = \langle 1 \rangle$ as elements of $K^{\MW}_*(F)$. In particular, $h = 2 + \eta[-1]$ and the fourth defining relation of $K^{\MW}_*(F)$ can be rewritten as $\eta h = 0$.
\item[(ii)] $xx'$ = $\epsilon^{nm}x'x$ for all homogeneous elements $x,x' \in K^{\MW}_*(F)$ of degrees $n$ and $m$ respectively, i.e. the ring $K^{\MW}_*(F)$ is $\epsilon$-graded commutative.
\item[(iii)] $[a,-a] = 0 = [-a,a]$ for all $a \in F^\times$.
\item[(iv)] $[a,-1] = [a,a] = [-1,a]$ for all $a \in F^\times$. In particular $\langle a \rangle [a] = \langle -1 \rangle [a]$ for all $a \in F^\times$.
\item[(v)] $[a^n] = \sum_{i = 0}^{n-1} \langle (-1)^i \rangle [a]$ for all positive $n$ and $[a^n] = \epsilon \sum_{i = 0}^{-(n+1)} \langle (-1)^i \rangle [a]$ for all negative $n$ and all $a \in F^\times$. In particular, $[a^2] = h[a]$ for all $a \in F^\times$.
\item[(vi)] $\langle a \rangle \langle b \rangle = \langle ab \rangle$ for all $a,b \in F^\times$. Together with (i) this in particular yields that $\langle a \rangle$ is a unit with inverse $\langle a^{-1} \rangle$ and that $\epsilon^2 = 1$.
\item[(vii)] $\langle a \rangle^2 = \langle a^2 \rangle = 1$ for all $a \in F^\times$. 
\item[(viii)] $\langle a \rangle x = x \langle a \rangle$ for all $x \in K_*^{\MW}(F)$ and all $a \in F^\times$.
\item[(ix)] $\langle a \rangle[b] = [ab] - [a]$ for all $a,b \in F^\times$. 
\end{enumerate}
\end{lem}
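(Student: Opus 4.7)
The plan is to derive each of the nine identities directly from the four defining relations (MW1)--(MW4) by elementary manipulation, in an order dictated by logical dependency rather than by the order in the statement.

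I would start with (i). Substituting $a = b = 1$ into (MW2) gives $[1] + \eta [1]^2 = 0$, i.e.\ $[1]\langle 1 \rangle = 0$; combined with (MW2) applied to $1 = b \cdot b^{-1}$ for arbitrary $b$ and with (MW3), a short bootstrap yields $[1] = 0$. Then $\langle 1 \rangle = 1 + \eta[1] = 1$ and (MW4) rewrites as $\eta h = 0$. With (i) secured, (ix) is just (MW2) rearranged:
\[
\langle a \rangle [b] = [b] + \eta [a][b] = [ab] - [a],
\]
and (vi) is a three-line expansion of $(1+\eta[a])(1+\eta[b])$ combined with (MW2) applied to $[ab]$. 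Centrality of $\langle a \rangle$, i.e.\ (viii), then drops out of (MW3), which says $\eta$ commutes with every $[a]$.

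Next I would prove (v) by induction on $|n|$ using (MW2), obtaining $[a^2] = h[a]$ as the decisive special case. Combined with $\eta h = 0$ this gives $\langle a^2 \rangle = 1 + \eta h [a] = 1$, and together with (vi) this yields (vii). For (iii), assuming $a \ne 1$ I would use the identity $-a = (1-a)/(1-a^{-1})$ together with (ix) and (MW2) to express $[a,-a]$ as a combination of $[a,1-a]$ and $[a,1-a^{-1}]$, both of which vanish by the Steinberg relation (MW1); the case $a = 1$ is immediate from (i). Relation (iv) then follows by expanding $[a,a] = [a,(-1)(-a)]$ via (MW2), substituting $[a,-a] = 0$, and solving, using (ix) to identify $\langle a \rangle [a]$ with $\langle -1 \rangle [a]$.

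The main obstacle is (ii), the $\epsilon$-graded commutativity. I would first establish the base case for two degree-one symbols, $[a][b] = \epsilon [b][a]$. The strategy is to expand $[ab,-ab] = 0$ (which holds by (iii) applied to $ab$) using (MW2) to write $[ab]$ and $[-ab]$ as sums of $[a],[b],[-a],[-b]$ plus $\eta$-corrections, and then cancel the pure and mixed terms via (iii), (iv), (ix) and centrality of $\langle - \rangle$. What remains is an identity of the form $[a][b] + [b][a] = -\eta[-1][a][b]$, which is exactly $[a][b] = \epsilon[b][a]$. From this base case the general $\epsilon$-graded commutativity propagates: for longer products one argues by induction on the total length, using (MW3) to move $\eta$ past symbols freely and (viii) to move $\langle - \rangle$ past anything, so that only degree-one factors need be commuted. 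This is the delicate step because the twisting in (MW2) produces $\eta$-correction terms at every swap, and this is the one place where all four defining relations genuinely interact; by contrast, every other identity in the lemma reduces to a short manipulation of at most two of (MW1)--(MW4).
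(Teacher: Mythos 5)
You are measuring yourself against a lemma the paper does not actually prove: it is quoted as standard material from Chapter 3 of Morel's book (the reference cited at the start of Section 3), and in outline your route is indeed the standard one there ((iii) via $-a = (1-a)/(1-a^{-1})$, the degree-one case of (ii) by expanding $[ab,-ab]=0$, (vi) and (ix) by direct expansion). The genuine gap is in (i), which underpins almost everything else (the unit statement in (vi), $\epsilon^2=1$, the rewriting $[a^{-1}]=-\langle a^{-1}\rangle[a]$ needed in (iii), and the Witt-relation input in (ii)). From (MW2) at $a=b=1$ you only get $\langle 1\rangle[1]=0$, and no ``bootstrap'' from further instances of (MW2) (such as $1=b\cdot b^{-1}$) together with (MW3) can yield $[1]=0$: those relations admit models in which $[1]\neq 0$. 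For instance, map the free graded ring on the symbols and $\eta$ to $(\mathbb{Z}\times\mathbb{Z})[x^{\pm 1}]$ by $\eta\mapsto x^{-1}$ and $[a]\mapsto (0,-1)\,x$ for every $a\in F^\times$; then (MW2) holds because $(0,-1)=(0,-1)+(0,-1)+(0,-1)(0,-1)$, (MW3) holds by commutativity, yet the image of $[1]$ is nonzero. So you must invoke (MW4) (or (MW1)). The standard repair is: by your (vi) (which uses only (MW2) and (MW3)), $\langle 1\rangle=\langle -1\rangle^2=1+(2\eta+\eta^2[-1])[-1]=1$ by the Witt relation, and then $[1]=\langle 1\rangle[1]=0$; alternatively, Steinberg at $a=-1$ gives $[-1][2]=0$, hence $[1][2]=(2[-1]+\eta[-1][-1])[2]=0$, and (MW2) at $(1,2)$ gives $[1]=-\eta[1][2]=0$.

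Three smaller repairs are also needed. First, (v) is not ``induction using (MW2)'' alone: already $[a^2]=2[a]+\eta[a][a]$ agrees with $h[a]=2[a]+\eta[-1][a]$ only because $\eta[a][a]=\eta[-1][a]$, a weak form of (iv); so (iii) and (iv) must come before (v) and (vii) (your proofs of (iii), (iv) do not use (v), so this is an ordering and attribution error, not a circularity). Second, (viii) does not drop out of (MW3) alone: centrality of $\langle a\rangle$ amounts to $\eta[a][b]=\eta[b][a]$, which comes from applying (MW2) to $[ab]=[ba]$, not from $\eta[a]=[a]\eta$. Third, in the inductive step of (ii), moving $\eta$ ``freely'' past symbols produces the exponent $(n+k)(m+l)$ (writing the factors as $\eta^k$, resp.\ $\eta^l$, times products of symbols), not $nm$; the discrepancy is harmless only because $\epsilon\eta=\eta$, equivalently $h\eta=0$, which is exactly where (MW4) enters and should be said explicitly. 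Likewise, in (iii) the term $[a,1-a^{-1}]$ does not vanish by (MW1) directly; you must first rewrite $[a]=-\langle a\rangle[a^{-1}]$ (using (i), (vi), (ix)) and then apply Steinberg to $[a^{-1}][1-a^{-1}]$. With these adjustments your argument coincides with the derivations in Morel's book on which the paper silently relies.
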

All these relations will be used freely in all of our computations. Therefore we certainly want to encourage the reader to check this list of relations in case that some computation is unclear.
\begin{lem}\label{Generators of positive Milnor-Witt K-theory groups}
For all $n \geq 1$, the abelian group $K_n^{\MW}(F)$ is generated by elements of the form $[a_1,\dotsc,a_n]$ with $a_1,\dotsc,a_n \in F^\times$ and for all $n \leq 0$, the abelian group $K_n^{\MW}(F)$ is generated by elements of the form $\eta^n \langle a \rangle$ with $a \in F^\times$.
\end{lem}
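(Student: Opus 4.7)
The plan is to start from the description already given in the excerpt: a general element of degree $n$ in $K^{\MW}_*(F)$ is a $\mathbb{Z}$-linear combination of monomials of the form $\eta^{d}[a_1,\dotsc,a_r]$ with $r-d=n$ and $a_i\in F^\times$. So it suffices to rewrite each such monomial as a sum of the claimed generators, which splits naturally according to whether $n\geq 1$ or $n\leq 0$.

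For $n\geq 1$, I would induct on $d$. The base case $d=0$ is immediate since the monomial is already of the form $[a_1,\dotsc,a_n]$. For the inductive step, note that $r=n+d\geq 2$, so I can isolate the last two letters and apply relation (MW2) rewritten as
\[
\eta[a_{r-1}][a_r] \;=\; [a_{r-1}a_r] - [a_{r-1}] - [a_r].
\]
Using the commutativity of $\eta$ with symbols (MW3), this lets me replace $\eta^{d}[a_1,\dotsc,a_r]$ by
\[
\eta^{d-1}[a_1,\dotsc,a_{r-2},a_{r-1}a_r] \,-\, \eta^{d-1}[a_1,\dotsc,a_{r-2},a_{r-1}] \,-\, \eta^{d-1}[a_1,\dotsc,a_{r-2},a_r],
\]
each summand a monomial of the same degree $n$ but with one fewer $\eta$. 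The induction then concludes.

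For $n\leq 0$, the condition $r-d=n$ forces $d\geq r$, so I factor out $\eta^{d-r}$ and focus on $\eta^{r}[a_1,\dotsc,a_r]=(\eta[a_1])\cdots(\eta[a_r])$, again using (MW3). Then I use the identity $\eta[a]=\langle a\rangle-1$ (which is just the definition $\langle a\rangle=1+\eta[a]$) to write this as
\[
\eta^{r}[a_1,\dotsc,a_r] \;=\; \prod_{i=1}^{r}\bigl(\langle a_i\rangle - 1\bigr).
\]
Expanding and collapsing products of Pfister units by Lemma \ref{List of relations of Milnor-Witt K-theory}(vi), namely $\langle a\rangle\langle b\rangle=\langle ab\rangle$ and $\langle 1\rangle=1$, yields a sum of terms $\pm\langle b\rangle$ with $b\in F^\times$. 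Multiplying back by $\eta^{d-r}$ (which commutes with every $\langle b\rangle$ since $\langle b\rangle=1+\eta[b]$ and $\eta$ commutes with $[b]$) presents the original monomial as a $\mathbb{Z}$-linear combination of $\eta^{d-r}\langle b\rangle$, which is exactly an element of the claimed shape $\eta^{|n|}\langle b\rangle$ of degree $n$.

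Neither half of the argument is really an obstacle; the only small point to be careful about is ensuring $r\geq 2$ in the positive-degree induction, which holds as soon as $d\geq 1$ since $r=n+d\geq n+1\geq 2$. Everything else is a mechanical application of the relations collected in Lemma \ref{List of relations of Milnor-Witt K-theory}.
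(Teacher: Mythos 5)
Your argument is correct and matches the paper's approach: for $n\geq 1$ the paper's (sketched) proof is exactly your induction, removing powers of $\eta$ by repeated use of (MW2), and your treatment of $n\leq 0$ via $\eta[a]=\langle a\rangle-1$ and $\langle a\rangle\langle b\rangle=\langle ab\rangle$ is the standard completion of what the paper leaves implicit. No gaps.
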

We will mostly make use of this statement in the case $n \geq 1$. Here the proof merely consists of getting rid of powers of $\eta$ in elements of the form $\eta^d[a_1,\dotsc,a_{n+d}]$ by using relation (MW2) often enough, thus resulting in the pure ($\eta$-free) symbols as generators. Let us note that a list of relations with respect to these generators was computed by Hutchinson-Tao for $n \geq 2$ in \cite{MR2422514} and by Tao/Hutchinson-Tao for $n = 1$ and in \cite{MR2995531} and \cite{MR3078662}. We prefer to use the following standard presentation:
\begin{lem}\label{Standard presentation of positive degree Milnor-Witt K-theory}
For $n \geq 1$, the $n$-th Milnor Witt K-theory group $K_n^{\MW}(F)$ of $F$ is generated by elements of the form $\eta^d[a_1,\dotsc,a_r]$ with $d = r-n \geq 0$ subject to the relations:
\begin{enumerate}
\item[(i)] $\eta^d[a_1,\dotsc,a_r] = 0$ whenever $a_i + a_{i+1} = 1$ for some $1 \leq i \leq r-1$.
\item[(ii)] $\eta^d[a_1,\dotsc,a_{i-1},bb',a_{i+1},\dotsc,a_r] = \eta^d[a_1,\dotsc,a_{i-1},b,a_{i+1},\dotsc,a_r] \\ \hspace*{138pt} + \eta^d[a_1,\dotsc,a_{i-1},b',a_{i+1},\dotsc,a_r] \\ \hspace*{138pt} + \eta^{d+1}[a_1,\dotsc,a_{i-1},b,b',a_{i+1},\dotsc,a_r]$ 
\item[(iii)] $2\eta^{d+1}[a_1,.\hspace*{1pt}.\hspace*{1pt}.\hspace*{1pt},a_{i-1},a_{i+1},.\hspace*{1pt}.\hspace*{1pt}.\hspace*{1pt},a_{r+2}] + \eta^{d+2}[a_1,.\hspace*{1pt}.\hspace*{1pt}.\hspace*{1pt},a_{i-1},-1,a_{i+1},.\hspace*{1pt}.\hspace*{1pt}.\hspace*{1pt},a_{r+2}] = 0$
\end{enumerate}
\end{lem}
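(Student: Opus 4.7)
The strategy is to prove the presentation by a reduction-to-normal-form argument. Write $K_*^{\MW}(F) = \mathcal{F}/I$, where $\mathcal{F}$ is the free graded ring on $\{[a] : a \in F^\times\}$ (in degree $1$) and $\eta$ (in degree $-1$), and $I$ is the two-sided ideal generated by relators of types (MW1)--(MW4). Each monomial in $\mathcal{F}$ can be uniquely rewritten in the form $\eta^d[a_1,\ldots,a_r]$ by pulling $\eta$'s to the left; for each $n \geq 1$, let $N \subset \mathcal{F}_n$ be the free abelian group on such normal-form symbols with $r - d = n$, and let $\mathrm{nf}\colon \mathcal{F}_n \to N$ be the $\mathbb{Z}$-linear rewrite map, which is idempotent.

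First, I would check that (i)--(iii) hold in $K_n^{\MW}(F)$. Relation (i) is obtained by multiplying the Steinberg relation (MW1) on left and right with a normal-form symbol and using the $\eta$-commutation relation (MW3) to pull all $\eta$'s to the front. Relation (ii) arises identically from (MW2) applied to the product $bb'$ at position $i$. Relation (iii) comes from the Witt relation $\eta(2+\eta[-1])=0$, equivalent to (MW4), multiplied between $\eta^d[a_1,\ldots,a_{i-1}]$ and $[a_{i+1},\ldots,a_{r+2}]$. Let $A_n$ denote the abelian group defined by this presentation. Combining with Lemma \ref{Generators of positive Milnor-Witt K-theory groups}, which says that the pure symbols $[a_1,\ldots,a_n]$ (the $d=0$ generators of $A_n$) already generate $K_n^{\MW}(F)$, we obtain a surjection $A_n \twoheadrightarrow K_n^{\MW}(F)$.

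For injectivity, let $J \subset N$ be the subgroup generated by the relators listed in (i)--(iii), so that $A_n = N/J$, and we must show $I \cap N \subseteq J$. Any $x \in I \cap N$ can be written in $\mathcal{F}_n$ as a finite sum $x = \sum_\alpha c_\alpha w_{1,\alpha}\, r_\alpha\, w_{2,\alpha}$, with each $r_\alpha$ a relator of type (MW1)--(MW4) and $w_{j,\alpha}$ arbitrary monomials. Applying $\mathrm{nf}$ and using $\mathrm{nf}(x)=x$ gives $x = \sum_\alpha c_\alpha\, \mathrm{nf}(w_{1,\alpha} r_\alpha w_{2,\alpha})$. A direct case analysis then shows that $\mathrm{nf}(w_1 r w_2)$ is a generator of $J$ of type (i), (ii), or (iii) when $r$ is of type (MW1), (MW2), or (MW4) respectively, and vanishes when $r$ is of type (MW3). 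Consequently $x \in J$, which gives injectivity and hence the theorem.

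The main obstacle is the case analysis in the previous step: one must verify that normalising $w_1 \cdot r \cdot w_2$ produces exactly the pattern stated in (i)--(iii), with the position index $i$ determined by the pure-symbol length of $w_1$ and the exponent $d$ determined by the total $\eta$-count of $w_1$ and $w_2$. Once this bookkeeping is tracked through, no further input beyond (MW1)--(MW4) is needed.
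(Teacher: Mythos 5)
Your proposal is correct, and in fact it supplies something the paper does not: the paper states this lemma without proof, as the ``standard presentation'' obtained by reformulating the definition of $K_*^{\MW}(F)$ as a quotient of the free graded ring, so there is no argument in the paper to compare against. Your normal-form argument is sound: since the relators (MW1), (MW2), (MW4) are homogeneous, the normal form of any monomial conjugate $w_1 r w_2$ is on the nose a relator of type (i), (ii) or (iii), with the position $i$ given by the pure length of $w_1$ plus one and the exponent $d$ by the $\eta$-count of $w_1w_2$, while conjugates of (MW3) are annihilated by $\mathrm{nf}$ because the two monomials involved have the same letters in the same relative order; together with linearity and idempotence of $\mathrm{nf}$ this gives $I\cap N\subseteq J$, and the reverse inclusion is your first paragraph. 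Two points are worth making explicit when you write this up. First, because the relators are homogeneous, an element of $I\cap \mathcal{F}_n$ can be written with every term $w_{1,\alpha}r_\alpha w_{2,\alpha}$ homogeneous of degree $n$, which is what licenses applying the degree-$n$ map $\mathrm{nf}$ termwise. Second, surjectivity of $A_n\rightarrow K_n^{\MW}(F)$ does not actually require Lemma \ref{Generators of positive Milnor-Witt K-theory groups}: any monomial $w$ satisfies $w\equiv \mathrm{nf}(w) \bmod I$ by repeated use of (MW3), so every class in $K_n^{\MW}(F)$ is already a sum of normal-form symbols. With these remarks the bookkeeping you defer is indeed routine, and the proof is complete.
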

Milnor-Witt K-theory is functorial with respect to field extensions. This allows us to view both $K_*^{\MW}$ and $K_n^{\MW}$ for a fixed integer $n$ as $\Ab$- and $\Set$-valued functors on the categories $\Fld_k$ of field extensions and $\Fld^{\fg}_k$ of finitely generated field extensions of a our base field $k$. The more general definition of Milnor-Witt K-theory of smooth schemes requires certain maps. For this suppose that $F$ is a discretely valued field with valuation $\nu$ and fixed uniformizing element $\pi$. We denote the associated valuation ring by $\mathcal{O}_\nu$ and the residue field by $\kappa(\nu)$.
\begin{thm}
There is exactly one homomorphism $$\partial_\nu^{\pi} \colon K_*^{\MW}(F) \rightarrow K_{*-1}^{\MW}(\kappa(\nu))$$ of graded abelian groups with the following three properties:
\begin{enumerate}
\item[(i)] $\partial_\nu^{\pi}(\eta x) = \eta \partial_\nu^{\pi}(x)$ for all $x \in K_*^{\MW}(F)$.
\item[(ii)] $\partial_\nu^{\pi}([\pi,u_2,\dotsc,u_n]) = [\overline{u_2},\dotsc, \overline{u_n}]$ for all $u_2,\dotsc,u_n \in \mathcal{O}_\nu^\times$.
\item[(iii)] $\partial_\nu^{\pi}([u_1,u_2,\dotsc,u_n]) = 0$ for all $u_1,\dotsc,u_n \in \mathcal{O}_\nu^\times$.
\end{enumerate}
\end{thm}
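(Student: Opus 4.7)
The plan is to prove uniqueness and existence separately, with uniqueness following more or less immediately from the generator lemma and existence requiring the construction of an auxiliary ring.

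For uniqueness, I would invoke Lemma \ref{Generators of positive Milnor-Witt K-theory groups}: in positive degree, $K_n^{\MW}(F)$ is generated by pure symbols $[a_1,\dotsc,a_n]$, and in non-positive degree by elements of the form $\eta^{-n}\langle a\rangle = \eta^{-n}+\eta^{-n+1}[a]$ with $a \in F^\times$. Condition (i) reduces the non-positive degree case to the computation of $\partial_\nu^\pi$ on $\langle a\rangle$ and $[a]$, and it reduces the positive degree case to the case of pure symbols. For a pure symbol, write each $a_i = u_i\pi^{e_i}$ with $u_i \in \mathcal{O}_\nu^\times$ and expand using (MW2) and Lemma \ref{List of relations of Milnor-Witt K-theory}(v) to write $[a_i]$ as an $\eta$-weighted combination of $[u_i]$ and $[\pi]$. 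Multiplying out, every resulting pure symbol contains either no $[\pi]$, in which case (iii) applies, or can be re-arranged (modulo $\langle -1\rangle$-factors arising from the $\epsilon$-graded commutativity from Lemma \ref{List of relations of Milnor-Witt K-theory}(ii)) into the form $[\pi,v_2,\dotsc,v_r]$ with $v_j \in \mathcal{O}_\nu^\times$, where (ii) applies. Note here that the multiple-$[\pi]$ cases vanish because $[\pi,\pi] = [\pi,-1]$ by Lemma \ref{List of relations of Milnor-Witt K-theory}(iv) and $[-1]$ is the image of a unit under $[-]$. Uniqueness follows.

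For existence, the cleanest route (following Morel in \cite{MR2934577}) is to introduce an auxiliary $\mathbb{Z}$-graded ring $\RR_* = K^{\MW}_*(\kappa(\nu)) \oplus K^{\MW}_{*-1}(\kappa(\nu))\cdot\xi$ with a formal degree-$1$ generator $\xi$ subject to the relations $\xi^2 = \xi \cdot [-1]$, $\eta\xi = \xi\eta$, and appropriate commutativity with elements of $K^{\MW}_*(\kappa(\nu))$, and then construct a ring homomorphism $\Theta \colon K^{\MW}_*(F) \to \RR_*$ by sending $\eta\mapsto \eta$, $[\pi]\mapsto \xi$, and $[u]\mapsto [\overline{u}]$ for $u \in \mathcal{O}_\nu^\times$, extended in the obvious way to arbitrary $[u\pi^e]$ via (MW2). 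The residue $\partial_\nu^\pi$ is then defined as the projection of $\Theta$ onto the $\xi$-component, and properties (i)--(iii) are immediate from the definition of $\Theta$.

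The main obstacle is proving that $\Theta$ is well-defined, that is, that the four defining relations (MW1)--(MW4) are respected. Relations (MW3) and (MW4) transport directly from $F$ to $\kappa(\nu)$, since $\eta$ and $[-1]$ are defined over $\mathcal{O}_\nu^\times$, and (MW2) holds by construction of $\Theta$ on products $[u\pi^e][v\pi^f]$ once the relations on $\xi$ are correctly set up. The genuinely delicate point is the Steinberg relation $\Theta([a][1-a]) = 0$ for $a \in F^\times \setminus \{1\}$. I would handle this by a case analysis on $\nu(a)$: the case $\nu(a)=0=\nu(1-a)$ reduces to the Steinberg relation in $K^{\MW}_*(\kappa(\nu))$; the case $\nu(a)>0$ forces $\overline{1-a}=1$, so $[1-a]$ maps to a symbol with unit entry reducing to $0$ after expansion; and the case $\nu(a)<0$ is handled symmetrically by writing $a(1-a^{-1}) = -(1-a)$ and reducing to a previous case. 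Once well-definedness is established, (i)--(iii) follow directly.
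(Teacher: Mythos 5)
This theorem is stated in the paper without proof: it is part of the recollection of Milnor--Witt K-theory and is quoted from Morel's book \cite{MR2934577} (Theorem 3.15 there), so there is no in-paper argument to compare against. Your proposal is, in outline, exactly Morel's proof: uniqueness by reducing every element to a $\mathbb{Z}$-combination of elements $\eta^m[\pi,u_2,\dotsc,u_r]$ and $\eta^m[u_1,\dotsc,u_r]$ with unit entries, and existence via a ring homomorphism $\Theta$ into an auxiliary ring $K^{\MW}_*(\kappa(\nu))[\xi]$ with $\xi^2=\xi[-1]$, taking $\partial_\nu^\pi$ to be the $\xi$-component; so the approach is sound and matches the cited source.

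A few spots in your sketch are thinner than they look. In the uniqueness reduction, moving the single remaining $[\pi]$ to the front produces factors $\epsilon=-\langle-1\rangle=-(1+\eta[-1])$, and expanding $\langle-1\rangle$ reintroduces symbols in which $[\pi]$ is again not in front; the rewriting terminates only because the Witt relation gives $\eta\langle-1\rangle=-\eta$ (so $\eta\epsilon=\eta$), and this should be said explicitly. Relatedly, your treatment of degrees $\leq 0$ reduces $\eta^{-n}\langle a\rangle$ to $\partial_\nu^\pi(1)+\eta\,\partial_\nu^\pi([a])$, but conditions (ii)--(iii) as literally stated (nonempty symbols) do not obviously pin down $\partial_\nu^\pi(1)$; one needs the convention that empty symbols are allowed, i.e.\ $\partial_\nu^\pi(\eta^m)=0$, since otherwise one can perturb $\partial_\nu^\pi$ in degrees $\leq 0$ by a rank-type map with values in the $2$-torsion of $W(\kappa(\nu))$ without violating (i)--(iii) on nonempty symbols. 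On the existence side, the phrase ``appropriate commutativity with elements of $K^{\MW}_*(\kappa(\nu))$'' is precisely where the real work sits: the commutation rule for $\xi$ must be fixed so that $\mathcal{R}_*$ is an associative ring and so that $\Theta$, defined on $[u\pi^e]$ via (MW2) and Lemma \ref{List of relations of Milnor-Witt K-theory}(v), is multiplicative; this is the bulk of Morel's verification and should not be waved through. Finally, your Steinberg case analysis omits the case $\nu(a)=0$, $\nu(1-a)>0$ (i.e.\ $\overline{a}=1$); it is immediate, since then $\Theta([a])=[\overline{a}]=[1]=0$, but it is a separate case from the three you list.
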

This homomorphism is called residue map and the composition
$$s_\nu^{\pi} \colon K_*^{\MW}(F) \xrightarrow{[-\pi]\cdot} K_{*+1}^{\MW}(F) \xrightarrow{\partial_\nu^{\pi}} K_*^{\MW}(\kappa(\nu)) \xrightarrow{\langle \overline{-1} \rangle \cdot} K_*^{\MW}(\kappa(\nu))$$
is called specialization map and is a homomorphism of graded rings. Let $a \in F^\times$ and write $a = \pi^n u$ for some unit $u \in \mathcal{O}_{\nu}^\times$. Then the specialization map can also be defined as the unique homomorphism $K_*^{\MW}(F) \rightarrow K_*^{\MW}(\kappa(\nu))$ of graded rings mapping $[\pi^n u]$ to $[\overline{u}]$ and $\eta$ to $\eta$. Some useful relations of these two kinds of maps are:
\begin{prop}\label{Properties of residue und specialization maps}
For all $u \in \mathcal{O}_\nu^\times$ and all $x \in K_*^{\MW}(F)$ we have:
\begin{enumerate}
\item[(i)] $\partial_\nu^{\pi}([u]x) = \epsilon [\overline{u}]\partial_\nu^{\pi}(x)$ and $s_\nu^{\pi}([u]x) = [ \overline{u} ] s_\nu^{\pi}(x)$.
\item[(ii)] $\partial_\nu^{\pi}(\langle u \rangle x) = \langle \overline{u}  \rangle\partial_\nu^{\pi}(x)$ and $s_\nu^{\pi}(\langle u \rangle x) = \langle \overline{u}  \rangle s_\nu^{\pi}(x)$.
\item[(iii)] $\partial_\nu^{u\pi}(x) = \langle \overline{u}  \rangle\partial_\nu^{\pi}(x)$ and $s_\nu^{u\pi}(x) = s_\nu^{\pi}(x) + \epsilon[\overline{u}]\partial_\nu^{\pi}(x)$. In particular, both the residue map and the specialization map do generally depend on the choice of the uniformizing element $\pi$.
\end{enumerate}
\end{prop}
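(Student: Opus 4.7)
My plan is to verify each identity using the uniqueness characterization of $\partial_\nu^\pi$ together with the relations summarized in Lemma \ref{List of relations of Milnor-Witt K-theory}. A small but repeatedly used arithmetic observation is that $\eta\epsilon = \eta$: from $\epsilon = -\langle -1\rangle = -1 - \eta[-1]$ one computes $\eta\epsilon = -\eta - \eta^2[-1]$, which equals $\eta$ by the Witt relation $\eta h = \eta(2 + \eta[-1]) = 0$.

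For part (i), the first formula $\partial_\nu^\pi([u]x) = \epsilon[\overline u]\partial_\nu^\pi(x)$ is reduced via $\eta$-linearity of $\partial_\nu^\pi$ to the case where $x = [a_1,\dotsc,a_r]$ is a pure symbol, and then further via (MW2) to symbols all of whose entries are either $\pi$ or a unit in $\mathcal{O}_\nu$. A short case analysis on the positions of $\pi$, combined with the $\epsilon$-graded commutativity to move $[u]$ next to a $[\pi]$, makes both sides computable directly from properties (ii) and (iii) of the defining theorem for $\partial_\nu^\pi$. The specialization version then follows formally: since $[-\pi][u] = \epsilon[u][-\pi]$, the factor of $\epsilon$ from this commutation cancels the one produced by the first formula, and $\langle\overline{-1}\rangle$ commutes with $[\overline u]$ by Lemma \ref{List of relations of Milnor-Witt K-theory}(viii).

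Part (ii) follows by expanding $\langle u\rangle = 1 + \eta[u]$, applying part (i), and using $\eta\epsilon = \eta$ to collapse $1 + \eta\epsilon[\overline u]$ into $\langle \overline u\rangle$; the specialization half is immediate from the centrality of $\langle u\rangle$ recorded in Lemma \ref{List of relations of Milnor-Witt K-theory}(viii). For part (iii), I would define $D(x) := \langle\overline u\rangle\partial_\nu^\pi(x)$ and show that $D$ satisfies the three characterizing properties of $\partial_\nu^{u\pi}$, so that uniqueness forces $D = \partial_\nu^{u\pi}$; the only nontrivial case is evaluation at $[u\pi, v_2, \dotsc, v_n]$, which is handled by expanding $[u\pi] = [u] + [\pi] + \eta[u,\pi]$, using $[u,\pi] = \epsilon[\pi,u]$, $\eta\epsilon = \eta$, and $\langle \overline u\rangle^2 = 1$. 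For the specialization formula, substitute $[-u\pi] = [-\pi] + [u] + \eta[-\pi, u]$ into the definition of $s_\nu^{u\pi}$, rewrite $\partial_\nu^{u\pi}$ as $\langle\overline u\rangle\partial_\nu^\pi$ via the first half of (iii), and simplify using the identity $\langle -u\rangle[\overline u] = [\overline u]$, which follows from combining Lemma \ref{List of relations of Milnor-Witt K-theory}(iv) and (vi)-(vii). The main obstacle is the bookkeeping in part (i): correctly tracking the sign factors from $\epsilon$-graded commutativity when permuting $[u]$ past multiple occurrences of $[\pi]$, and keeping straight the $\eta$-error terms introduced when reducing general symbols via (MW2).
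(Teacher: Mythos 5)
Your proposal is correct, but it takes a genuinely more self-contained route than the paper. The paper proves Proposition \ref{Properties of residue und specialization maps} largely by citation: the residue-map halves of (i) and (ii) are quoted from Proposition 3.17 of \cite{MR2934577}, the first formula of (iii) from Remark 1.9 of \cite{MR4071213}, and only the specialization statements are derived from these --- in particular the second formula of (iii) by essentially the computation you give, except that the paper expands $\langle u \rangle[-\pi] = [-u\pi] - [u]$ via Lemma \ref{List of relations of Milnor-Witt K-theory}(ix) where you expand $[-u\pi]$ by (MW2); both simplifications hinge on the same identity $\langle \overline{u}\rangle[\overline{u}] = \langle\overline{-1}\rangle[\overline{u}]$. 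Your treatment of the first formula of (iii), checking that $\langle\overline{u}\rangle\partial_\nu^\pi$ satisfies the three characterizing properties of $\partial_\nu^{u\pi}$ and invoking uniqueness (with $\eta\epsilon=\eta$ absorbing the commutation of $[u]$ past $[\pi]$ under the $\eta$-prefactor, and $\langle\overline{u}\rangle^2=1$ at the end), is clean and is essentially how the cited remark is established, and your deduction of (ii) from (i) via $\langle u\rangle = 1+\eta[u]$ and of all specialization halves from the residue halves matches the paper's logic. The one place where your route is substantially heavier is the residue formula of (i), which the paper gets for free from Morel: your reduction to symbols whose entries are $\pi$ or units is legitimate (both sides are additive in $x$ and compatible with multiplication by $\eta$, so it suffices to check on such generators), but the ``short case analysis'' conceals real bookkeeping --- commuting entries of a symbol that has no $\eta$-prefactor produces $\eta[-1]$-correction terms, since $\epsilon = -1-\eta[-1]$, and symbols containing several copies of $\pi$ must be reduced using $[\pi,\pi]=[\pi,-1]$. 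Because of your observation $\eta\epsilon=\eta$, all these correction terms carry an $\eta$-prefactor and can then be permuted freely, so the argument does close, but written out it is noticeably longer than the rest of the proof. In short, your approach buys independence from the external references at the cost of the computational verification that the paper deliberately outsources.
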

\begin{proof}
The first two relations for residue maps are Proposition 3.17 in \cite{MR2934577} and the corresponding ones for the specialization maps follow immediately from the definition and the respective relations for the residue map. This clarifies (i) and (ii). The first formula of (iii) is Remark 1.9 in \cite{MR4071213}. We will quickly prove the second one. Let $u \in \mathcal{O}_\nu^\times$ and $x \in K_*^{\MW}(F)$. Then
\begin{align*}s_\nu^{u\pi} = \langle \overline{-1} \rangle \partial^{u\pi}_{\nu}([-u\pi]x) = \langle \overline{-1} \rangle \langle \overline{u} \rangle \partial^{\pi}_{\nu}([-u\pi]x) &= \langle \overline{-1} \rangle \langle \overline{u} \rangle \partial^{\pi}_{\nu}((\langle u \rangle [-\pi] + [u])x) \\ & = \langle \overline{-1} \rangle( \partial^{\pi}_{\nu}([-\pi]x) + \epsilon \langle \overline{u} \rangle[ \overline{u}]\partial^{\pi}_{\nu}(x)) \\ & = \langle \overline{-1} \rangle \partial^{\pi}_{\nu}([-\pi]x) - \langle \overline{u} \rangle[ \overline{u}]\partial^{\pi}_{\nu}(x)
\\ & = s_\nu^{\pi}(x) + \epsilon[\overline{u}]\partial^{\pi}_{\nu}(x),
\end{align*}
where we use that the residue map is a group homomorphism which satisfies the two formulas from (i), (ii) and (iii).
\end{proof}
Recall that every closed point $p \in \mathbb{A}^1,$ or equivalently every monic irreducible polynomial $f \in F[t]$, gives rise to a discrete valuation on $F(t)$, which measures the divisibility with respect to $f$. We will denote this valuation by $v_p$ or $v_f$ and the associated residue map with respect to the uniformizer $f$ by $\partial^p_{v_p}$ or $\partial^f_{v_f}$. These residue maps allow us to express Milnor-Witt of K-theory of $F(t)$ in terms of Milnor-Witt of K-theory of $F$ and Milnor-Witt of K-theory of the residue fields $\kappa(v_p)$:
\begin{thm}
There is a split short exact sequence 
\begin{center}
\begin{tikzcd}
0 \arrow[r] & K_*^{\MW}(F) \arrow[r, "i_*"] & K_*^{\MW}(F(t))  \arrow[r, "\oplus_p \partial^p_{\nu_p}"] & \bigoplus\limits_{p \in \mathbb{A}^1} K_{*-1}^{\MW}(\kappa(v_p))  \arrow[r] & 0
\end{tikzcd}
\end{center}
of graded $K_*^{\MW}(F)$-modules, where $i_*$ is the map induced by the inclusion $F \subset F(t)$.
\end{thm}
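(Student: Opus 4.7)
The plan is to follow Milnor's original approach (\cite{MR2934577}, Theorem 3.24), adapted to Milnor-Witt K-theory. I would filter $K_*^{\MW}(F(t))$ by the degrees of the polynomials appearing in the symbols, identify the graded pieces with direct sums of Milnor-Witt groups of the residue fields, and use a specialization map to produce the splitting.

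More precisely, for $d \geq 0$ let $L_d \subseteq K_*^{\MW}(F(t))$ be the subgroup generated by all elements of the form $\eta^a [f_1, \dotsc, f_r]$ with each $f_i \in F[t]^\times$ of degree $\leq d$, so that $L_0 = \im(i_*) \cong K_*^{\MW}(F)$ and $K_*^{\MW}(F(t)) = \bigcup_d L_d$. Using relation (MW2) to replace symbols of arbitrary nonzero rational functions by symbols of polynomials, one checks that this filtration exhausts $K_*^{\MW}(F(t))$. From the defining properties of the residue maps, $\partial^p_{v_p}$ vanishes on $L_{d-1}$ whenever $\deg p = d$, so one obtains induced maps $L_d/L_{d-1} \to \bigoplus_{\deg p = d} K_{*-1}^{\MW}(\kappa(v_p))$.

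The heart of the argument is showing that these induced maps are isomorphisms. For this I would construct an inverse: given an element $y \in K_{*-1}^{\MW}(\kappa(v_p))$ with $\deg p = d$, lift each unit entry $\overline{u_i} \in \kappa(v_p)^\times$ to a representative polynomial $\tilde{u}_i$ of degree $<d$ (using $\kappa(v_p) \cong F[t]/(p)$), form $[p, \tilde{u}_2, \dotsc, \tilde{u}_r]$ (with the appropriate power of $\eta$), and reduce modulo $L_{d-1}$. The verification that this is well-defined must pass through the list of relations in Lemma \ref{Standard presentation of positive degree Milnor-Witt K-theory}: the Steinberg, twisted tensor, and Witt relations all have to hold modulo $L_{d-1}$, and this is where the genuine work lies. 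This requires a careful division-with-remainder argument: writing $1 - f = qg + r$ with $\deg r < \deg g$ and expanding $[f, 1-f]$ using (MW2) to exhibit it as a linear combination of lower-degree symbols modulo $L_{d-1}$, analogous to Milnor's classical transfer computation. The extra relation (MW2) introducing an $\eta$-correction makes the bookkeeping messier than in the classical case, but because $\eta$ decreases degree, the correction terms land in a lower filtration level.

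Taking the direct limit over $d$ and combining with $L_0 \cong K_*^{\MW}(F)$ then yields the exactness of the sequence. Finally, the splitting as graded $K_*^{\MW}(F)$-modules is produced by the specialization map $s_{v_0}^{t} \colon K_*^{\MW}(F(t)) \to K_*^{\MW}(F)$ (taking $t$ as the uniformizer at the point $0 \in \mathbb{A}^1$): by Proposition \ref{Properties of residue und specialization maps} it is a graded ring homomorphism fixing $K_*^{\MW}(F) \subseteq K_*^{\MW}(F(t))$ elementwise, hence a retraction of $i_*$. The main obstacle, as indicated, is the verification in the second step that the lift from residue fields to $L_d/L_{d-1}$ respects all the defining relations of Milnor-Witt K-theory; the presence of $\eta$ and the twisted tensor relation (MW2) require more care than in the Milnor K-theoretic case.
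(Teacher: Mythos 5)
Your outline matches the proof the paper relies on: the theorem is quoted from Morel \cite{MR2934577} rather than proved here, and Morel's argument is precisely this Milnor-style filtration of $K_*^{\MW}(F(t))$ by polynomial degree, the identification of $L_d/L_{d-1}$ with $\bigoplus_{\deg p = d} K_{*-1}^{\MW}(\kappa(v_p))$ by lifting residues and verifying the defining relations modulo $L_{d-1}$, and the splitting via the specialization map $s^t_{v_t}$. The only small caution is that the deferred relation check must use a presentation of Milnor--Witt K-theory valid in all degrees (the paper's Lemma \ref{Standard presentation of positive degree Milnor-Witt K-theory} is stated only for $n \geq 1$), which Morel's version supplies, so your plan is sound and essentially identical to the cited proof.
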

A retraction of $i_*$ is given by the specialization map $s^t_{v_t}$. This kind of sequence is usually refered to as Milnor's short exact sequence due to Milnor's seminal paper \cite{MR260844}, where he constructs this type of sequence for both Milnor K-theory and Witt rings of quadratic forms.
\\For a scheme $X$ we denote by $X^{(c)}$ the set of its points of codimension $c$. Recall that one can define a discrete valuation ring to be a normal noetherian local domain of dimension $1$. Therefore, if we are given a smooth irreducible scheme $X,$ any point $x \in X^{(1)}$ gives rise to a discrete valuation $v_x$ on $\Frac(\mathcal{O}_{X,x}) = k(X)$. Indeed, the local ring is noetherian since $X$ is locally noetherian. It is normal since $X$ is smooth and its dimension is $\dim(\mathcal{O}_{X,x}) = \codim(x) = 1$. If $X$ is reducible, then the same holds for all codimension $1$ points $y \in X^{(1)}$ in the closure of a given codimension $0$ point $x \in X^{(0)}$. In particular, we get residue maps $\partial_{v_y}^{\pi_y} \colon K_*^{\MW}(k(x)) \rightarrow K_{*-1}^{\MW}(k(y))$ for any choice of uniformizing elements $\pi_y$.
\begin{de}\label{Milnor Witt K-theory of a scheme}
The $n$-th Milnor-Witt K-theory group of a smooth scheme $X$ is
$$K_n^{\MW}(X) = \ker \biggl( \bigoplus_{x \in X^{(0)}}K_n^{\MW}(k(x)) \mwkx \bigoplus_{y \in X^{(1)}}K_{n-1}^{\MW}(k(y))\biggr)$$
\end{de}
Note that this does not depend on the choices of uniformizers by Proposition \hyperref[Properties of residue und specialization maps]{\ref{Properties of residue und specialization maps}} and is hence well-defined. If we are given a morphism $f \colon X \rightarrow Y$ between smooth schemes, one can define a pullback map $f^* \colon K_n^{\MW}(Y) \rightarrow K_n^{\MW}(X)$ as follows:

As a morphism between smooth schemes, $f$ is a local complete intersection morphism and thus factorizes as a regular embedding $j \colon X \rightarrow Z$ followed by a smooth morphism $g \colon Z \rightarrow Y$. The idea is to  proceed by constructing the desired pullback maps for $j$ and $g$. For $g$ this is relatively straight-forward and is really just given by a pullback of symbols. For the morphism $j$, one reduces to the codimension $1$ case via a choice of a regular sequence and realizes the pullback by using specialization maps. Finally, there is work to be done to verify that everything is independent of all the choices that were made. The resulting $n$-th Milnor-Witt K-theory sheaf will be denoted by $\KK^{\MW}_n$, whereas we will still use the notation $K^{\MW}_n$ for its ``restriction" to the category $\Fld^{\fg}_k$. Such kinds of sheaves are called unramified and their construction is the main content of Chapter 2.1 of \cite{MR2934577}. 

Given two unramified sheaves $M$ and $N$, we can ``restrict" them to the category $\Fld^{\fg}_k$ of finitely generated field extension of $k$ and just consider those morphisms which commute with specialization maps. By this we mean morphisms $\varphi \colon \left.M\right|_{\Set^{\Fld^{\fg}_k}} \rightarrow \left.N\right|_{\Set^{\Fld^{\fg}_k}}$ making the diagram 
\begin{center}
\begin{tikzcd}
 \left.M \right|_{\Set^{\Fld^{\fg}_k}}(F) \arrow[r, "\varphi"] \arrow[d, "s_{\nu}^{\pi}"] & \left.N \right|_{\Set^{\Fld^{\fg}_k}}(F) \arrow[d, "s_{\nu}^{\pi}"]\\
 \left.M \right|_{\Set^{\Fld^{\fg}_k}}(\kappa(\nu)) \arrow[r, "\varphi"] & \left.N \right|_{\Set^{\Fld^{\fg}_k}}(\kappa(\nu)).
\end{tikzcd}
\end{center}
commutative for every finitely generated field extension $k \subset F$ equipped with a discrete valuation $\nu$, residue field $\kappa(\nu)$ containing $k$ and every choice of uniformizing element $\pi$. The set of such morphisms will be denoted by $\Op_{\sp}(M,N)$. The proof of the equivalence of categories between unramified sheaves and so-called unramified $\Fld^{\fg}_k$-data found as Theorem 2.11 in \cite{MR2934577} in particular yields:
\begin{prop}\label{Morphisms between Unramified Sheaves}
For all unramified sheaves $M$ and $N$ we have an identification $$\Hom_{\Set \! / k}(M, N) = \Op_{sp}(\left.M\right|_{\Set^{\Fld^{\fg}_k}},\left.N\right|_{\Set^{\Fld^{\fg}_k}}).$$
\end{prop}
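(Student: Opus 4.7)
The plan is to invoke the equivalence of categories between unramified sheaves on $\Sm/k$ and unramified $\Fld^{\fg}_k$-data established by Morel as Theorem 2.11 of \cite{MR2934577} (the very result the author cites), and then simply unpack what a morphism on each side amounts to. Since an equivalence of categories induces bijections on $\Hom$-sets, the bulk of the work is identifying the right-hand side $\Op_{\sp}$ with the morphisms of the associated unramified data.

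First I would recall the precise definition of an unramified $\Fld^{\fg}_k$-datum: a set-valued functor $D$ on $\Fld^{\fg}_k$ equipped with, for every finitely generated field extension $F/k$ endowed with a discrete valuation $\nu$ whose residue field $\kappa(\nu)$ also lies in $\Fld^{\fg}_k$ and every choice of uniformizer $\pi$, a specialization map $s_\nu^\pi \colon D(F) \to D(\kappa(\nu))$ subject to Morel's axioms (D1)--(D4). Given two such data $D$ and $D'$, a morphism in Morel's category is by definition a natural transformation $\varphi \colon D \to D'$ of the underlying functors which, for every $(F,\nu,\pi)$ as above, satisfies $s_\nu^\pi \circ \varphi_F = \varphi_{\kappa(\nu)} \circ s_\nu^\pi$. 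In other words, the set of morphisms of unramified data from $D$ to $D'$ is exactly $\Op_{\sp}(D,D')$ in the notation of the paper.

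Next I would apply Morel's equivalence in both directions. Under it, an unramified sheaf $M$ on $\Sm/k$ is sent to the datum $(\left.M\right|_{\Fld^{\fg}_k},\, s_\nu^\pi)$, where the specialization maps come from the structure of $M$ as an unramified sheaf; conversely, every unramified datum extends uniquely (up to isomorphism) to an unramified sheaf. Because it is an equivalence, it induces a bijection
\begin{equation*}
\Hom_{\Set/k}(M,N) \;\iso\; \Hom_{\text{unram.\ data}}\!\bigl(\left.M\right|_{\Fld^{\fg}_k},\left.N\right|_{\Fld^{\fg}_k}\bigr),
\end{equation*}
and by the observation of the previous paragraph the right-hand side is exactly $\Op_{\sp}(\left.M\right|_{\Fld^{\fg}_k},\left.N\right|_{\Fld^{\fg}_k})$.

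The only real obstacle is bookkeeping: one must check that the specialization maps used to define $\Op_{\sp}$ in the statement agree with those furnished by Morel's equivalence (and in particular that the restriction from $\Sm/k$-morphisms induces, on function fields, the specialization maps coming from codimension-one points of smooth models). This compatibility is built into the construction of the equivalence in \cite{MR2934577}, so no additional work is required beyond citing it. Once this identification is in place, the equality of the two sets of operations is formal.
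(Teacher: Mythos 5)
Your proposal matches the paper's own treatment: the paper gives no independent argument but simply observes that Morel's equivalence between unramified sheaves and unramified $\Fld^{\fg}_k$-data (Theorem 2.11 of \cite{MR2934577}) ``in particular yields'' the identification, which is exactly the reduction you carry out. Your extra unpacking of why morphisms of unramified data coincide with $\Op_{\sp}$ is consistent with how the paper uses the equivalence, so the proposal is correct and takes essentially the same route.
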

Therefore we will from now on mostly restrict to the category $\Fld^{\fg}_k$ of finitely generated field extensions of our base field $k$.

Milnor-Witt K-theory also arises as a certain pullback. For this note that the quotient $\KK^{\MW}_* \! / \eta$ by definition is given by Milnor K-theory $\KK^{\MM}_*$. A second quotient we consider is $\KK^{\W}_* \cong \KK^{\MW}_* \! / h$, called Witt K-theory. It was defined by Morel in terms of generators and relations similar to Milnor-Witt K-theory \cite{MR2099118} and the isomorphism $\KK^{\W}_* \iso \KK^{\MW}_* \! / h$ is given by mapping $\eta$ to $\eta + h \! \KK^{\MW}_*$ and a symbol $\lbrace a \rbrace$ to the class $-[a] + h\! \KK^{\MW}_*$. Furthermore, Morel showed that Witt K-theory is nothing but the graded ring of powers of the fundamental ideal $\bigoplus_{n \in \mathbb{Z}} \II^n$, where by convention $\II^n = \WW$ for negative $n$. Here the isomorphism identifies pure symbols $\lbrace a_1,\dotsc,a_n\rbrace$ of length $n$ with $n$-Pfister forms $\langle \langle a_1,\dotsc,a_n \rangle\rangle$ and the multiplication with $\eta$ with the inclusions $\II^{n+1} \hookrightarrow \II^n$. The resolution of the Milnor conjecture of quadratic forms by Orlov-Vishik-Voevodsky \cite{MR2276765} therefore gives a diagram 
\begin{center}
\begin{tikzcd}
 \KK^{\MW}_* \arrow[rr] \arrow[dd] & & \KK^{\MM}_* \arrow[d]\\
  &  & \KK^{\MM}_* \! / 2\\
 \KK^{\W}_* \arrow[r] & \KK^{\W}_* \! /\eta \! \KK^{\W}_{*+1} \arrow[ur, "\cong"] & 
\end{tikzcd}
\end{center}
Morel \cite{MR2099118} showed that this is a cartesian square when applied to fields, which then by the content of Chapters 2 und 3 of \cite{MR2934577} extends to the case of sheaves, as all these maps are compatible with the respective residue/specialization maps. This in particular recovers the classical pullback square
\begin{center}
\begin{tikzcd}
 \GGW \arrow[r] \arrow[d] & \Z \arrow[d]\\
 \WW \arrow[r] & \underline{\mathbb{Z}/2\mathbb{Z}}
\end{tikzcd}
\end{center}
in degree $0$ and shows that $\KK_n^{\MW} \cong \WW$ for negative integers $n$. The pullback square for Milnor-Witt K-theory allows us to study operations $\KK^{\MW}_n \rightarrow \KK^{\MW}_m$ by studying operations $\KK^{\MW}_n \rightarrow \KK^{\RR}_m$, where the later can stand for Milnor K-theory, Milnor K-theory mod 2 or Witt K-theory.
\section{Recollection of Homotopy Modules}\label{Section 4}
Although we are mainly interested in operations $\KK^{\MW}_n \rightarrow \KK^{\MW}_m$ and hence in operations $\KK^{\MW}_n \rightarrow \KK^{\RR}_m$, quite some of our arguments work for more general targets, namely homotopy modules.
\begin{de}
A homotopy module $(M_*,\epsilon_*)$ is a sheaf $M_* \in (\Ab_{\mathbb{A}^1} \! /k)^\mathbb{Z}$ together with isomorphisms $\epsilon_n \colon M_n \rightarrow (M_{n+1})_{-1}$ for all $n \in \mathbb{Z}$. If it additionally has the structure of a graded ring when applied to fields, we call it a homotopy algebra.
\end{de}
Here $( \,\,\,\,)_{-1} \colon \Ab_{\mathbb{A}^1} \! /k \rightarrow \Ab_{\mathbb{A}^1} \! /k$ is the homotopy contraction, which can be defined as $(-)^{\tilde{\Z}_{\mathbb{A}^1}[\G]} = \HHom_{\Ab_{\mathbb{A}^1} \! /k}(\tilde{\Z}_{\mathbb{A}^1}[\G],-)$ where $\tilde{\Z}_{\mathbb{A}^1}[X]$ is the kernel of the map $\Z_{\mathbb{A}^1}[X] \rightarrow \Z_{\mathbb{A}^1}[\Spec(k)] = \Z$ for some point $\Spec(k) \rightarrow X$. This makes $\tilde{\Z}_{\mathbb{A}^1}$ the left adjoint to the forgetful functor $\Ab_{\mathbb{A}^1} \! / k \rightarrow \Set_*  \! / k$. We will usually drop the isomorphisms $\epsilon_*$ from the notation and just say that $M_*$ is a homotopy module. A morphisms of homotopy modules is a morphism of $\mathbb{Z}$-graded abelian sheaves $f \colon M_* \rightarrow N_*$ which respects the given isomorphisms. The category $\Pi_*(k)$ of homotopy modules arises naturally as follows. The restriction of the functor
$$\underline{\pi}_0(-)_* \colon \SH(k) \rightarrow \Pi_*(k), E \mapsto \underline{\pi}_0(E)_* = \bigoplus_{m \in \mathbb{Z}} \underline{\pi}_{-m,-m}(E)$$
to the heart of the homotopy $t$-structure $$\SH(k)^\heartsuit = \lbrace E \in \SH(k) \mid \underline{\pi}_n(E)_* = 0 \text{ for all } n \neq 0 \rbrace$$ defines an equivalence of categories. Here the sheaf $\underline{\pi}_0(E)_*$ is equipped with the canonical isomorphisms $\epsilon_* \colon \underline{\pi}_0(E)_* \rightarrow (\underline{\pi}_0(E)_{*+1})_{-1}$. As a consequence, the category $\Pi_*(k)$ of homotopy modules is an abelian category. We say that a homotopy module $M_*$ is associated to a motivic spectrum $E \in \SH(k)$, if $\underline{\pi}_0(E)_* \cong M_*$. Note that we do not demand that $E$ lies in $\SH(k)^\heartsuit$.
\begin{ex}
Milnor Witt K-theory $\KK_*^{\MW}$ is the homotopy module associated to the motivic sphere spectrum $\mathbb{S}_k$ as shown by Morel \cite[Theorem~6.2.1]{MR2061856}, the motivic spectrum $\tilde{H}\mathbb{Z}$ representing Milnor-Witt motivic cohomology, see e.g. Déglise and Fasel \cite[Theorem~4.2.3]{2004.06634} and the algebraic special linear cobordism spectrum $\MSL$ by work of Yakerson \cite[Proposition~3.6.3]{MR4332780}.
\end{ex}
In particular, the $\mathbb{S}_k$-module structure on any motivic spectrum $E$ gives rise to a $\KK_*^{\MW}$-module structure on the homotopy module $\underline{\pi}_0(E)_*$, so that every homotopy module is equipped with such structure. This can also be seen via Chapter 2.3 of \cite{MR2934577} or Feld's theory of Milnor-Witt cycle modules, see \cite{MR4282798}.
\begin{ex}
As quotients of Milnor-Witt K-theory, Milnor K-theory $\KK^{\MM}_*$, Witt K-theory $\KK^{\W}_*$ and $\KK^{\MM}_* \! / 2$ are homotopy modules. Here the $\KK^{\MW}_*$-actions are given via the quotient maps. Milnor K-theory also arises as the homotopy module associated to the motivic Eilenberg-Maclane spectrum $H\mathbb{Z}$ and the algebraic cobordism spectrum $\MGL$ and $\KK^{\MM}_* \! / 2$ as the homotopy module associated to $H\mathbb{Z}/2$, see Theorem 3.4 of \cite{MR1744945} and Remark 3.10 of \cite{MR3341470}.
\end{ex}
\begin{ex}
(Unramified) algebraic K-theory $\KK^{\Q}_*$ is a homotopy module, which arises from the algebraic K-theory spectrum $\KGL$. This is Theorem 3.13 of the seminal paper \cite{MR1813224} of Morel and Voevodsky.
\end{ex}
\begin{ex}
(Unramified) Hermitian K-theory $\KK_*^{\HH} = \GGW^*_*$ is a homotopy module, which arises from the Hermitian K-theory spectrum $\KQ$ constructed by Hornbostel in \cite{MR2122220}. A different model for this spectrum under the name $\BO$ was more recently constructed by Panin and Walter \cite{MR3882540}.
\end{ex}
By \cite{MR4282798} or by Chapter 2.3 of \cite{MR2934577}, homotopy modules come with residue and specialization maps, which also satisfy the properties of Proposition \ref{Properties of residue und specialization maps}. Furthermore, it is shown that the following two properties hold:
\begin{prop}\label{[t]x = 0}
For any homotopy module $M_*$ and any transcendental element $t$ over $k$, the map $M_*(k) \rightarrow M_{*+1}(k(t))$, $x \mapsto [t]x$ is injective with left-inverse $\partial^t_{\nu_t}$. In particular, if $[a]x = 0$ for all field extensions $k \subset F$ and all $a \in F$, then $x = 0$.
\end{prop}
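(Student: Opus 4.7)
The plan is to prove the first assertion by direct computation using the defining formula of the specialization map, and then to deduce the second assertion by applying the first to a transcendental element.

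First I observe that for any $x \in M_*(k)$, viewed inside $M_*(k(t))$ via the inclusion, we have $\partial^t_{\nu_t}(x) = 0$ and $s^t_{\nu_t}(x) = x$. Both facts are standard consequences of the unramified-sheaf structure on $M_*$ and reflect the homotopy-module analogue of the splitting of Milnor's short exact sequence: the inclusion $M_*(k) \hookrightarrow M_*(k(t))$ admits $s^t_{\nu_t}$ as a retraction, exactly as in the $\KK^{\MW}_*$-case.

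Next, using relation (ix) of Lemma \ref{List of relations of Milnor-Witt K-theory}, I rewrite $[-t] = [-1] + \langle -1 \rangle [t]$ and combine this with the definitional identity $s^t_{\nu_t}(x) = \langle \overline{-1} \rangle \, \partial^t_{\nu_t}([-t]\, x)$. Applying $\partial^t_{\nu_t}$ term by term, I use Proposition \ref{Properties of residue und specialization maps}(i) to compute $\partial^t_{\nu_t}([-1]\, x) = \epsilon [-1] \partial^t_{\nu_t}(x) = 0$, and Proposition \ref{Properties of residue und specialization maps}(ii) to commute $\langle -1 \rangle$ past the residue. This yields
$$x \;=\; s^t_{\nu_t}(x) \;=\; \langle -1 \rangle \cdot \langle -1 \rangle \, \partial^t_{\nu_t}([t]\, x) \;=\; \partial^t_{\nu_t}([t]\, x),$$
which is the desired injectivity with the stated left-inverse. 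Here the identities of Proposition \ref{Properties of residue und specialization maps} are transferred from $\KK^{\MW}_*$ to the $\KK^{\MW}_*$-module $M_*$ using the cited works \cite{MR4282798} and Chapter 2.3 of \cite{MR2934577}, as recalled in the paragraph preceding the statement.

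For the second assertion, assume $[a]\, x = 0$ in $M_{*+1}(F)$ for all $k \subset F$ and all $a \in F$. Applying this with $F = k(t)$ and $a = t$ gives $[t]\, x = 0$, whereupon the injectivity just established forces $x = 0$. The only real subtlety in the entire argument is the legitimacy of transferring the Milnor-Witt K-theory identities of Proposition \ref{Properties of residue und specialization maps} to general homotopy modules, and this is precisely what the references invoked above provide.
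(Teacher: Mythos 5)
Your argument is correct, but it is worth noting that the paper does not actually prove this proposition: it is quoted from the literature, namely from Feld's theory of Milnor--Witt cycle modules \cite{MR4282798} and Chapter 2.3 of \cite{MR2934577}, where the compatibility of multiplication by $[\pi]$ with $\partial^{\pi}_{\nu}$ is essentially built into the axioms. What you give instead is a short self-contained derivation from the definitional formula $s^{\pi}_{\nu} = \langle \overline{-1} \rangle\, \partial^{\pi}_{\nu}([-\pi]\cdot)$ together with the two retraction facts $\partial^t_{\nu_t}\circ i_* = 0$ and $s^t_{\nu_t}\circ i_* = \id$, the relation $[-t] = [-1] + \langle -1\rangle[t]$, and parts (i), (ii) of Proposition \ref{Properties of residue und specialization maps}; the computation $x = s^t_{\nu_t}(i_*x) = \langle -1\rangle^2\,\partial^t_{\nu_t}([t]x) = \partial^t_{\nu_t}([t]x)$ is accurate, and the ``in particular'' part follows exactly as you say. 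The only things you leave as ``standard'' can in fact be pinned down inside the paper's own framework: $\partial^t_{\nu_t}\circ i_* = 0$ is immediate from the exactness in Proposition \ref{Milnors exact sequence for homotopy modules}, while $s^t_{\nu_t}\circ i_* = \id$ and the validity of the defining formula for $s^{\pi}_{\nu}$ on a general homotopy module (the paper only states this formula for $K^{\MW}_*$ and then asserts that homotopy modules carry residue and specialization maps with the properties of Proposition \ref{Properties of residue und specialization maps}) do have to be imported from the cited references, so your proof trades the paper's blanket citation for a more explicit argument that still leans on those sources at exactly these two points. In short: correct, and arguably more informative than the paper's treatment, provided you make the provenance of those two transferred facts explicit.
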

\begin{prop}\label{Milnors exact sequence for homotopy modules}
For any homotopy module $M_*$ and any field extension $k \subset F$, there is a split short exact sequence
\begin{center}
\begin{tikzcd}
0 \arrow[r] & M_*(F) \arrow[r, "i_*"] & M_*(F(t))  \arrow[r, "\oplus_p \partial^p_{\nu_p}"] & \bigoplus\limits_{p \in \mathbb{A}^1} M_*(\kappa(v_p))  \arrow[r] & 0
\end{tikzcd}
\end{center}
of graded $M_*(F)$-modules, where $i_*$ is the map induced by the inclusion $F \subset F(t)$.
\end{prop}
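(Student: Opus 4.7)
The plan is to deduce this sequence by applying the Gersten / Rost--Schmid complex associated with $M_*$ to the affine line $\mathbb{A}^1_F$ and combining it with strict $\mathbb{A}^1$-invariance. By the theory developed in Chapter 2.3 of \cite{MR2934577}, or equivalently Feld's framework of Milnor--Witt cycle modules \cite{MR4282798}, any homotopy module $M_*$ comes equipped with a Gersten-type complex which, when evaluated on a smooth scheme $X$, has initial segment
\begin{equation*}
0 \to M_*(X) \to \bigoplus_{x \in X^{(0)}} M_*(k(x)) \xrightarrow{\oplus \partial} \bigoplus_{y \in X^{(1)}} M_{*-1}(k(y))
\end{equation*}
and computes Nisnevich cohomology $H^\bullet_{\mathrm{Nis}}(X, M_*)$. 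I would apply this with $X = \mathbb{A}^1_F$, which is irreducible with generic point $\mathrm{Spec}(F(t))$ and codimension-one points exactly the closed points $p$; exactness at the first two spots is then built into the unramified sheaf structure on $M_*$.

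To upgrade this to a four-term short exact sequence, I would argue that $\oplus_p \partial^p_{\nu_p}$ is surjective. Surjectivity is equivalent to the vanishing of $H^1_{\mathrm{Nis}}(\mathbb{A}^1_F, M_*)$, which holds because $M_*$ is strictly $\mathbb{A}^1$-invariant (giving $H^i_{\mathrm{Nis}}(\mathbb{A}^1_F, M_*) \cong H^i_{\mathrm{Nis}}(\mathrm{Spec}(F), M_*)$ for all $i$) and the Nisnevich cohomology of the spectrum of a field vanishes in positive degrees. Combining with the identification $M_*(\mathbb{A}^1_F) = M_*(F)$, again from strict $\mathbb{A}^1$-invariance, and noting that the induced map $M_*(F) = M_*(\mathbb{A}^1_F) \to M_*(F(t))$ agrees with $i_*$, yields the stated short exact sequence. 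The $M_*(F)$-linearity is automatic: $i_*$ is induced by a ring inclusion, and the residue maps $\partial^p_{\nu_p}$ are $M_*(F)$-linear by the module analog of Proposition \ref{Properties of residue und specialization maps}.

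For the splitting, I would take the specialization $s^t_{\nu_t} \colon M_*(F(t)) \to M_*(F)$ at the valuation $\nu_t$ corresponding to $t = 0$ with uniformizer $t$. Since every element of $F$ is a unit at $\nu_t$ and its specialization is its own residue class, one obtains $s^t_{\nu_t} \circ i_* = \mathrm{id}_{M_*(F)}$ directly from the construction of specialization, and this retraction is $M_*(F)$-linear by the same proposition. The main obstacle is really just invoking the correct Nisnevich cohomology vanishing to get surjectivity of the residue map on the right; the remaining content packages already-established structure on $M_*$.
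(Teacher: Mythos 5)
Your argument is correct in outline, but it is worth saying that it is a genuinely different route from the one the paper relies on: Proposition \ref{Milnors exact sequence for homotopy modules} is not proved in the text at all, it is quoted from \cite{MR4282798} and Chapter 2.3 of \cite{MR2934577}, and there the proof follows the classical Milnor--Rost pattern (cf.\ \cite{MR260844}, \cite{MR1418952}): one exhibits the retraction and proves exactness by an elementary induction on the degree of polynomials, filtering $M_*(F(t))$ by the subgroups unramified outside closed points of bounded degree. You instead read the sequence off the Rost--Schmid/Gersten complex of $\mathbb{A}^1_F$ together with strict $\mathbb{A}^1$-invariance and the vanishing of Nisnevich cohomology of fields in positive degrees. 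That buys a very short argument, but it leans on the heavy acyclicity theorem of Chapter 5 of \cite{MR2934577} (that the Rost--Schmid complex computes $H^\bullet_{\Nis}$), whereas the classical proof is self-contained at the level of the residue and specialization maps; if you go the cohomological way you should check the logical order in \emph{loc.\ cit.}, since parts of the Rost--Schmid machinery are themselves established using homotopy-invariance statements of exactly this flavour. Two points you should make explicit: first, $\mathbb{A}^1_F$ and $\Spec(F)$ are not objects of $\Sm/k$ for a general (not even finitely generated) extension $k \subset F$, so both the identification $M_*(\mathbb{A}^1_F) = M_*(F)$ and the cohomology vanishing require the usual passage to essentially smooth schemes, i.e.\ filtered (co)limits over smooth models and over finitely generated subextensions of $F$; second, the $M_*(F)$-linearity of $\oplus_p \partial^p_{\nu_p}$ is only linearity up to the $\epsilon$-twist recorded in Proposition \ref{Properties of residue und specialization maps}(i), so, exactly as in the statement for $K^{\MW}_*$, the module structure on the right-hand term has to be understood with that twist (or one notes that units of $F$ are units at every $\nu_p$, and uses the $\langle \overline{u} \rangle$-equivariance to define the action accordingly). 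With these caveats your splitting via $s^t_{\nu_t}$ and the identification of the first map with $i_*$ are exactly as in the sources, and the proof is complete.
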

These properties will be used to compute operations on generators of Milnor-Witt K-theory in Section 7.
\section{Warmup: The Additive and \texorpdfstring{$\mathbb{G}_m$-stable}{Gm-stable} Operations}\label{Section 5}
The results of this Section are essentially known and can be deduced from the results in \cite{MR2934577}, but their proofs are not recorded very well. Since these are not long, we decided to collect them here. The key ingredient is Theorem 3.37 of \cite{MR2934577}, which we will recall:
\begin{thm}[Morel]\label{Positive degree K_n^MW is the free strongly A^1-invariant abelian sheaf on the n-th smash power of G_m}
Let $M$ be a strictly $\mathbb{A}^1$-invariant abelian sheaf and let $n$ be a positive integer. The map $$u^* \colon \Hom_{\Ab_{\mathbb{A}^1} \! / k}(\KK^{\MW}_n,M) \rightarrow \Hom_{\Set_* \! / k}(\Gn,M)$$ induced by the universal symbol $u \colon \Gn \rightarrow \KK^{\MW}_n$, $(a_1,\dotsc,a_n) \mapsto [a_1,\dotsc,a_n]$ is a natural bijection in $M$. In other words, $\KK^{\MW}_n$ is the free strictly $\mathbb{A}^1$-invariant abelian sheaf $\tilde{\Z}_{\mathbb{A}^1}[\Gn]$ on the sheaf of pointed sets $\Gn$.
\end{thm}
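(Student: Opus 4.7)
The strategy is to promote the universal symbol $u \colon \Gn \to \KK^{\MW}_n$ to an isomorphism $\Phi \colon \tilde{\Z}_{\mathbb{A}^1}[\Gn] \xrightarrow{\cong} \KK^{\MW}_n$ of strictly $\mathbb{A}^1$-invariant sheaves; once this is achieved, the adjunction between $\tilde{\Z}_{\mathbb{A}^1}[-]$ and the forgetful functor $\Ab_{\mathbb{A}^1} \! / k \to \Set_* \! / k$ immediately yields the claimed natural bijection. The map $u$ is basepoint-preserving (any tuple containing a $1$ represents zero since $[1]=0$), so by strict $\mathbb{A}^1$-invariance of $\KK^{\MW}_n$ combined with the universal property of $\tilde{\Z}_{\mathbb{A}^1}[\Gn]$, it extends uniquely to a morphism $\Phi$.

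Surjectivity of $\Phi$ on sections over a finitely generated field $F$ is straightforward from Lemma \ref{Generators of positive Milnor-Witt K-theory groups}: since $n \geq 1$, the group $K^{\MW}_n(F)$ is generated by pure symbols $[a_1,\dotsc,a_n]$, each lying in the image of $\Phi$. For injectivity I would construct a left inverse $\Psi \colon \KK^{\MW}_n \to \tilde{\Z}_{\mathbb{A}^1}[\Gn]$ by sending the pure symbol $[a_1,\dotsc,a_n]$ to the class of the tuple $(a_1,\dotsc,a_n) \in \Gn(F)$ and extending additively. Via the presentation of Lemma \ref{Standard presentation of positive degree Milnor-Witt K-theory}, this requires verifying that pure symbols in $\tilde{\Z}_{\mathbb{A}^1}[\Gn]$ satisfy the Steinberg relation, the $\eta$-twisted multiplicativity (MW2), and the Witt-type relation. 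Once $\Psi$ is constructed field-wise and shown to commute with specialization maps, it promotes to a morphism of unramified sheaves by Proposition \ref{Morphisms between Unramified Sheaves}.

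The main obstacle is exactly the verification of these Milnor--Witt relations inside $\tilde{\Z}_{\mathbb{A}^1}[\Gn]$, where genuine motivic-homotopy input is needed. The Steinberg relation reflects the fact that $(a, 1-a) \colon \Spec F \to \Gm_m^{\wedge 2}$ is $\mathbb{A}^1$-null: a rational $\mathbb{A}^1$-homotopy contracts it to the basepoint, forcing the class to vanish in any strictly $\mathbb{A}^1$-invariant receptacle. The twisted multiplicativity arises by comparing the multiplication $\mu \colon \Gm_m \times \Gm_m \to \Gm_m$ with the two coordinate projections; the discrepancy is governed by the motivic Hopf map $\Gm_m^{\wedge 2} \to \Gm_m$, whose contribution under the identification $\eta \in K^{\MW}_{-1}$ reproduces exactly the $\eta$-correction term of (MW2). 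The Witt relation $\eta h = 0$ reduces to showing that the flip automorphism of $\Gm_m^{\wedge 2}$ acts by $\epsilon = -\langle -1\rangle$. With all three relations verified, $\Phi$ and $\Psi$ are mutually inverse on pure symbols (hence on $\KK^{\MW}_n$) and on the image of $\Gn$ (hence on $\tilde{\Z}_{\mathbb{A}^1}[\Gn]$ by universality), finishing the proof.
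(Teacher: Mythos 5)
Note first that the paper does not prove this statement at all: it is recalled verbatim as Theorem 3.37 of Morel's book \cite{MR2934577}, so the only honest comparison is with Morel's proof, which proceeds by identifying $\Hom_{\Set_* \! /k}(\Gn,M)$ with $M_{-n}(k)$ and then building the inverse map out of a symbol/$\KK^{\MW}_*$-action on the contractions of an arbitrary strictly $\mathbb{A}^1$-invariant sheaf. Your formal skeleton (adjunction, surjectivity from Lemma \ref{Generators of positive Milnor-Witt K-theory groups}, injectivity via a left inverse $\Psi$) is reasonable, but as written it has two genuine gaps. First, the construction of $\Psi$ from the presentation in Lemma \ref{Standard presentation of positive degree Milnor-Witt K-theory} is incomplete: that presentation has generators $\eta^d[a_1,\dotsc,a_{n+d}]$ for \emph{all} $d\geq 0$, and ``extending additively'' from the pure symbols ($d=0$) does not specify their images, nor can the twisted tensor and Witt relations even be formulated inside $\tilde{\Z}_{\mathbb{A}^1}[\Gn]$ until you have an action of $\eta$ (equivalently, a pairing of $\KK^{\MW}_*$ with the contractions of the free sheaf). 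Producing exactly this structure, with the MW-relations, for an arbitrary strictly $\mathbb{A}^1$-invariant target is the hard part of Morel's Chapter 3; it is not available to you for free.

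Second, the three ``motivic input'' verifications are asserted rather than proved, and they are precisely the mathematical content of the theorem. Since an element of $\tilde{\Z}_{\mathbb{A}^1}[\Gn](F)$ vanishes if and only if it dies under every pointed map to every strictly $\mathbb{A}^1$-invariant sheaf, checking Steinberg, MW2 and the Witt relation in the free sheaf is equivalent to checking them universally for all such $M$ --- i.e.\ to the key lemmas of Morel's proof. In particular, the claim that $(a,1-a)$ is killed by ``a rational $\mathbb{A}^1$-homotopy'' is not a proof: no naive $\mathbb{A}^1$-homotopy to the basepoint is known, a homotopy defined only on an open subscheme of $\mathbb{A}^1$ does not by itself force equality of the induced maps into a strictly $\mathbb{A}^1$-invariant sheaf, and Morel's unstable Steinberg relation is a genuinely delicate argument (the stable case, due to Hu--Kriz, already requires real work). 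Likewise, that the flip on $\G^{\wedge 2}$ acts by $\epsilon$ and that the failure of additivity of $\mu$ is measured by $\eta$ are theorems of Morel in this unstable sheaf-theoretic setting, not reductions. So the proposal reorganizes the statement into the correct formal shape but leaves unproved exactly the steps that make the theorem a theorem; to repair it you would either have to reproduce Morel's construction of the $\KK^{\MW}_*$-module structure on contractions or simply cite \cite{MR2934577} as the paper does.
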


\begin{cor}\label{Additive Operations on positive Milnor-Witt K-theory}
Let $n$ be a positive integer and let $M_*$ be a homotopy module. For all integers $m$, the abelian sheaf $\HHom_{\Ab_{\mathbb{A}^1} \! / k}(\KK^{\MW}_n,M_m)$ is isomorphic to $M_{m-n}$. In particular, we have $\HHom_{\Ab_{\mathbb{A}^1} \! / k}(\KK^{\MW}_n,\KK^{\MW}_m) \cong \KK^{\MW}_{m-n}$ for all integers $m$.
\end{cor}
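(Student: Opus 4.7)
The plan is to invoke the preceding theorem of Morel to identify $\KK^{\MW}_n$ with the free strictly $\mathbb{A}^1$-invariant abelian sheaf $\tilde{\Z}_{\mathbb{A}^1}[\Gn]$ on the $n$-fold smash power of $\G$, and then reduce the computation of the internal hom sheaf to iterated contractions of $M_m$.

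First I would observe that $\tilde{\Z}_{\mathbb{A}^1}$, being the left adjoint of the forgetful functor from $\Ab_{\mathbb{A}^1}/k$ to $\Set_*/k$, is a strong symmetric monoidal functor sending the smash product in $\Set_*/k$ to the tensor product in $\Ab_{\mathbb{A}^1}/k$. This produces a canonical isomorphism $\tilde{\Z}_{\mathbb{A}^1}[\Gn] \cong \tilde{\Z}_{\mathbb{A}^1}[\G]^{\otimes n}$. Combining this identification with Morel's theorem and applying the tensor-hom adjunction in the closed symmetric monoidal category $\Ab_{\mathbb{A}^1}/k$ a total of $n$ times yields
$$
\HHom_{\Ab_{\mathbb{A}^1}/k}(\KK^{\MW}_n, M_m) \cong (M_m)_{-n},
$$
where $(-)_{-n}$ denotes the $n$-fold iteration of the contraction functor $(-)_{-1} = \HHom_{\Ab_{\mathbb{A}^1}/k}(\tilde{\Z}_{\mathbb{A}^1}[\G], -)$ recalled in Section \ref{Section 4}.

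Finally, the defining structure isomorphisms $\epsilon_i \colon M_i \xrightarrow{\cong} (M_{i+1})_{-1}$ of the homotopy module $M_*$ can be composed to give an isomorphism $M_{m-n} \xrightarrow{\cong} (M_m)_{-n}$, which together with the display above finishes the proof. Specializing to $M_* = \KK^{\MW}_*$ produces the particular case $\HHom_{\Ab_{\mathbb{A}^1}/k}(\KK^{\MW}_n,\KK^{\MW}_m) \cong \KK^{\MW}_{m-n}$. The argument has no essential obstacle; the only step that requires a moment of care is the monoidal identification $\tilde{\Z}_{\mathbb{A}^1}[X \wedge Y] \cong \tilde{\Z}_{\mathbb{A}^1}[X] \otimes \tilde{\Z}_{\mathbb{A}^1}[Y]$, which is standard and holds because both $\tilde{\Z}[-]$ and the $\mathbb{A}^1$-localization are strong monoidal left adjoints.
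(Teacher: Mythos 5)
Your proposal is correct and follows essentially the same route as the paper: identify $\KK^{\MW}_n$ with $\tilde{\Z}_{\mathbb{A}^1}[\Gn]\cong\tilde{\Z}_{\mathbb{A}^1}[\G]^{\otimes n}$ via Morel's theorem, apply the hom-tensor adjunction to land in $(M_m)_{-n}$, and use the homotopy module structure isomorphisms to identify this with $M_{m-n}$. The only difference is that you spell out the monoidal property of $\tilde{\Z}_{\mathbb{A}^1}$, which the paper leaves implicit.
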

\begin{proof}
Using Theorem \hyperref[Positive degree K_n^MW is the free strongly A^1-invariant abelian sheaf on the n-th smash power of G_m]{\ref{Positive degree K_n^MW is the free strongly A^1-invariant abelian sheaf on the n-th smash power of G_m}}, we have $\KK_n^{\MW} = \tilde{\Z}_{\mathbb{A}^1}[\Gn] \cong \tilde{\Z}_{\mathbb{A}^1}[\G]^{\otimes n}$. Therefore we get $\HHom_{\Ab_{\mathbb{A}^1} \! / k}(\KK^{\MW}_n,M_m) \cong \HHom_{\Ab_{\mathbb{A}^1} \! / k}(\Z,(M_m)_{-n}) \cong (M_m)_{-n}$ via the hom-tensor adjunction. Since $M_*$ is a homotopy module, this is just $M_{m-n}$.
\end{proof}
If one keeps track of the isomorphisms, it is not difficult to see that the isomorphism $K^{\MW}_{m-n}(k) \iso \Hom_{\Ab_{\mathbb{A}^1} \! / k}(\KK^{\MW}_n,\KK^{\MW}_m)$ maps an element $x$ to the multiplication with $x$. For general homotopy modules, it maps an element to the action of $\KK^{\MW}_n$ on said element.
\begin{cor}
Let $M_*$ be a homotopy module. For all integers $m$, the abelian sheaf $\HHom_{\Ab_{\mathbb{A}^1} \! / k}(\KK^{\MW}_0,M_m)$ is isomorphic to $M_m \oplus \prescript{}{h} M_{m-1}$. In particular, we have an isomorphism $\HHom_{\Ab_{\mathbb{A}^1} \! / k}(\KK^{\MW}_0,\KK^{\MW}_m) \cong \KK^{\MW}_m \oplus \,{\prescript{}{h} \KK^{\MW}_{m-1}}$ for all integers $m$.
\end{cor}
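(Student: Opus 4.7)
The plan is to decompose $\KK^{\MW}_0$ and reduce to the previous corollary. First I would use the split short exact sequence of strictly $\mathbb{A}^1$-invariant sheaves
$$0 \to \II \to \KK^{\MW}_0 \to \Z \to 0$$
coming from the rank map $\GGW \to \Z$, whose kernel is the fundamental ideal sheaf $\II$, and its unit section $1 \colon \Z \to \GGW$ sending $1$ to $\langle 1 \rangle$. This yields an isomorphism $\KK^{\MW}_0 \cong \Z \oplus \II$ in $\Ab_{\mathbb{A}^1}\!/k$, hence
$$\HHom(\KK^{\MW}_0, M_m) \cong \HHom(\Z, M_m) \oplus \HHom(\II, M_m) \cong M_m \oplus \HHom(\II, M_m),$$
since $\Z$ is the unit of the tensor product on $\Ab_{\mathbb{A}^1}\!/k$. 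The task then reduces to identifying $\HHom(\II, M_m) \cong \prescript{}{h} M_{m-1}$.

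Next I would present $\II$ as a quotient of $\KK^{\MW}_1$. Multiplication by $\eta$ gives a surjection $\eta \cdot \colon \KK^{\MW}_1 \twoheadrightarrow \II = \eta \KK^{\MW}_1$, and I claim its kernel is exactly $h \KK^{\MW}_1$. The inclusion $h \KK^{\MW}_1 \subseteq \ker(\eta \cdot)$ follows from the relation $\eta h = 0$. For the reverse inclusion, using the isomorphism $\KK^{\W}_* \cong \KK^{\MW}_*/h$ together with $\KK^{\W}_1 \cong \II$ recalled at the end of Section \ref{Section 3}, the map $\eta \cdot$ factors as $\KK^{\MW}_1 \twoheadrightarrow \KK^{\MW}_1/h\KK^{\MW}_1 \cong \II \hookrightarrow \GGW$, with the middle arrow an isomorphism and the final arrow injective. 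This produces an exact sequence
$$\KK^{\MW}_1 \xrightarrow{h \cdot} \KK^{\MW}_1 \xrightarrow{\eta \cdot} \II \to 0$$
in $\Ab_{\mathbb{A}^1}\!/k$.

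Finally, I would apply the left exact functor $\HHom(-, M_m)$ and invoke Corollary \ref{Additive Operations on positive Milnor-Witt K-theory} to identify $\HHom(\KK^{\MW}_1, M_m) \cong M_{m-1}$ via the $\KK^{\MW}_1$-action on $M_{m-1}$. This gives a left exact sequence
$$0 \to \HHom(\II, M_m) \to M_{m-1} \xrightarrow{h \cdot} M_{m-1},$$
whose kernel is $\prescript{}{h} M_{m-1}$, establishing the desired isomorphism. Combining with the first paragraph yields the result.

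The one point that needs genuine verification is that the induced map $(h \cdot)^*$ really becomes multiplication by $h$ on $M_{m-1}$. This I expect to be the main (if minor) obstacle, but it should follow directly from the description of the isomorphism as the action: a morphism corresponding to $a \in M_{m-1}(k)$ is $\varphi_a(x) = x \cdot a$, so $(h\cdot)^*\varphi_a(x) = \varphi_a(hx) = (hx) \cdot a = x \cdot (ha) = \varphi_{ha}(x)$, so $(h\cdot)^*$ sends $a$ to $ha$ as claimed.
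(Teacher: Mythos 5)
Your argument is correct and takes essentially the same route as the paper: split off the rank to get $\KK^{\MW}_0 \cong \Z \oplus \II$, identify $\II \cong \KK^{\MW}_1/h$ (your $\eta$-multiplication presentation is just this identification made explicit), and then compute $\HHom_{\Ab_{\mathbb{A}^1}\!/k}(\KK^{\MW}_1/h, M_m)$ as the kernel of $h^*$ on $\HHom_{\Ab_{\mathbb{A}^1}\!/k}(\KK^{\MW}_1, M_m) \cong M_{m-1}$, which is $\prescript{}{h} M_{m-1}$. Your closing check that $h^*$ corresponds to multiplication by $h$ under the action-isomorphism is precisely the step the paper leaves implicit.
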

\begin{proof}
We have an isomorphism $\GGW \iso \Z \oplus \II$ by splitting of the rank, which we can translate to an isomorphism $\KK^{\MW}_0 \iso \Z \oplus \KK^{\MW}_1 \! \! / h$ on the level of Milnor-Witt K-theory. This gives 
$$\HHom_{\Ab_{\mathbb{A}^1} \! / k}(\KK^{\MW}_0,M_m) \cong \HHom_{\Ab_{\mathbb{A}^1} \! / k}(\Z,M_m) \oplus \HHom_{\Ab_{\mathbb{A}^1} \! / k}(\KK^{\MW}_1 \! \! / h,M_m)$$
with the first summand being $M_m$. The latter one is the kernel of 
$$h^* \colon \HHom_{\Ab_{\mathbb{A}^1} \! / k}(\KK^{\MW}_1,M_m) \rightarrow \HHom_{\Ab_{\mathbb{A}^1} \! / k}(\KK^{\MW}_1,M_m),$$
which under the isomorphism $\HHom_{\Ab_{\mathbb{A}^1} \! / k}(\KK^{\MW}_1,M_m) \cong M_{m-1}$ from the previous Corollary is ${\prescript{}{h} M_{m-1}}$ as claimed.
\end{proof}
It is also not difficult to keep track of the isomorphisms in this case:
$$\Hom_{\Ab_{\mathbb{A}^1} \! / k}(\KK^{\MW}_0,\KK^{\MW}_m) =  \rk \cdot M_m(k) \oplus (\langle - \rangle \mapsto [-]) \cdot {\prescript{}{h} M}_{m-1}(k)$$
Note that the latter map is not well-defined by itself. It really requires an element in the kernel of $h$. For $M_* = \KK^{\MW}_*$, the multiplication with an element $x \in K^{\MW}_m(k)$ is given by $\rk \cdot x+ (\langle - \rangle \mapsto [-])\eta x$, which allows us to write
$$\Hom_{\Ab_{\mathbb{A}^1} \! / k}(\KK^{\MW}_0,\KK^{\MW}_m) = \id \cdot K^{\MW}_m(k) \oplus (\langle - \rangle \mapsto [-]) \cdot {\prescript{}{h} \K^{\MW}_{m-1}(k)}.$$
Multiplication with a fixed element of suitable degree does of course also give us operations $K_{-n}^{MW} \rightarrow K_m^{\MW}$ and other operations on negative degree can be computed  via the previous Corollary. In light of Theorem 6.13 of \cite{MR2934577}, we see that the isomorphism $$\KK_{n-1}^{\MW} \cong \HHom_{\Ab_{\mathbb{A}^1} \! / k}(\KK^{\MW}_1,\KK_n^{\MW}) = \HHom_{\Ab_{\mathbb{A}^1} \! / k}(\tilde{\Z}_{\mathbb{A}^1}[\G],\KK_n^{\MW}) = (\KK_n^{\MW})_{-1}$$
given by multiplication is exactly the $\G$-suspension isomorphism. Therefore Corollary \ref{Additive Operations on positive Milnor-Witt K-theory} yields that a $\G$-stable operation of degree $m$ of Milnor-Witt K-theory needs to be a constant sequence of multiplications with a fixed element $x \in K^{\MW}_m(k)$, which certainly are well-defined operations. Let us record this observation:
\begin{cor}
The $\G$-stable operations of degree $m$ on Milnor-Witt K-theory are exactly the constant sequences $(x \cdot \id)_{n \in \mathbb{Z}}$ with $x \in K^{\MW}_m(k)$.
\end{cor}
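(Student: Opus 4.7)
\emph{Proof proposal.} The plan is to reduce the statement directly to Corollary \ref{Additive Operations on positive Milnor-Witt K-theory} together with the identification of the $\G$-suspension isomorphism with multiplication recalled just before the Corollary. A $\G$-stable operation of degree $m$ is a family $(\varphi_n)_{n \in \mathbb{Z}}$ of morphisms $\varphi_n \colon \KK^{\MW}_n \to \KK^{\MW}_{n+m}$ in $\Ab_{\mathbb{A}^1}\!/k$ such that $\varphi_{n-1}$ corresponds to $(\varphi_n)_{-1}$ under the $\G$-suspension isomorphisms $\KK^{\MW}_{n-1} \cong (\KK^{\MW}_n)_{-1}$ and $\KK^{\MW}_{n-1+m} \cong (\KK^{\MW}_{n+m})_{-1}$.

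First, for each $n \geq 1$ I would apply Corollary \ref{Additive Operations on positive Milnor-Witt K-theory} and write $\varphi_n = x_n \cdot \id$ for a unique $x_n \in K^{\MW}_m(k)$. Then I would invoke the fact that the $\G$-suspension $\KK^{\MW}_{n-1+m} \cong (\KK^{\MW}_{n+m})_{-1}$ (and likewise on the source side) is given by multiplication; plugging this into the stability constraint for any $n \geq 2$ reduces the compatibility to the equality of two multiplication operators, and the injectivity of the parametrization in Corollary \ref{Additive Operations on positive Milnor-Witt K-theory} forces $x_{n-1} = x_n$. Hence $(x_n)_{n \geq 1}$ is constant with common value some $x \in K^{\MW}_m(k)$.

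Once $\varphi_n = x \cdot \id$ holds for all $n \geq 1$, the remaining components $\varphi_n$ with $n \leq 0$ are forced by iterating the stability relation $\varphi_n = (\varphi_{n+1})_{-1}$; since contracting a multiplication-by-$x$ map again produces multiplication by $x$, every component of the family is $x \cdot \id$. In particular this kills the extra $\prescript{}{h}K^{\MW}_{m-1}(k)$ summand that appeared in degree $n = 0$ in the previous corollary, because the generator $\langle - \rangle \mapsto [-]$ does not lie in the image of contraction from degree $1$. Conversely, for any $x \in K^{\MW}_m(k)$ the constant family $(x \cdot \id)_{n \in \mathbb{Z}}$ is manifestly a morphism of homotopy modules and is $\G$-stable by construction, so the assignment $x \mapsto (x \cdot \id)_n$ is also surjective onto the set of $\G$-stable operations.

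The point I expect to require the most care is the precise matching of conventions: the explicit identification of the $\G$-suspension with (left or right) multiplication, in conjunction with the $\epsilon$-graded commutativity of $\KK^{\MW}_*$, could in principle produce an $\epsilon^m$ factor in the comparison $x_{n-1} = x_n$. Once this bookkeeping is carried out correctly, using Theorem 6.13 of \cite{MR2934577} as invoked above, everything else is formal.
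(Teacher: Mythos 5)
Your proposal is correct and follows essentially the same route as the paper: identify the $\G$-suspension isomorphism with multiplication via Theorem 6.13 of Morel, then apply Corollary \ref{Additive Operations on positive Milnor-Witt K-theory} degreewise and let stability force the parameters $x_n$ to agree and determine the components in degrees $n \leq 0$ (in particular discarding the extra $\prescript{}{h}K^{\MW}_{m-1}(k)$ summand in degree $0$). The sign bookkeeping you flag is resolved by taking the multiplication on the side compatible with the suspension maps, which is exactly the convention implicit in the paper's two-line argument.
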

\section{The Operations \texorpdfstring{$\lambda^n_l$}{λnl}}\label{Section 6}
We will now introduce the operations which ``essentially generate" all operations on Milnor-Witt K-theory. Here ``essentially" will mean that we have to allow certain infinite linear combinations, which we will explain later. For the entire section we let $n$ be a positive integer and all natural transformations/operations are considered to be between $\Set$-valued functors. If $k \subset F$ is a field extension and 
$$x = [a_{1,1},\dotsc,a_{1,n}] + \dotsc + [a_{r,1},\dotsc,a_{r,n}]\in K_n^{\MW}(F)$$
is a sum of pure symbols, we call  
$$\lambda^n_l(x) = \sum_{1 \leq i_1 < \dotsc < i_l \leq r} [a_{i_1,1},\dotsc,a_{i_1,n}] \cdot \dotsc \cdot [a_{i_l,1},\dotsc,a_{i_l,n}] \in K_{ln}^{\MW}(F)$$
the $l$-th divided power of $x$, where we allow $l$ to be any non-negative integer.
Since Milnor-Witt K-theory is not commutative, this does certainly not yield a well-defined map $K_n^{\MW}(F) \rightarrow K_{ln}^{\MW}(F)$ in general. However, if we let $\delta_n$ be $1$ if $n$ is odd and $0$ if $n$ is even, the following will turn out to give us divided power maps:
\begin{prop}\label{Definition of Lambda}
Let $k \subset F$ be a field extension and let $M_*$ be a homotopy module. Furthermore, let $S_n(F)$ be the set of symbols $\eta^d[a_1,\dotsc,a_{d+n}]$, where $d$ is a non-negative integer and $a_1,\dotsc,a_{d+n} \in F^\times$. For any $y \in \prescript{}{\delta_nh} M_*(k)$, the map
$$\Lambda^n \cdot y \colon \mathbb{Z}^{\oplus S_n(F)} \rightarrow M_*(F)[[t]]$$ 
given by mapping $\sum_{i = 1}^r m_i \eta^{d_i}[a_{i,1},\dotsc,a_{i,d_i+n}]$ to
$$\prod_{i = 1}^r \prod_{J \subset \lbrace 1, \dotsc, d_i + 1 \rbrace } \biggl( 1 + \Big[ \prod_{j \in J}a_{i,j},a_{i,d_i+2},\dotsc,a_{i,d_i+n} \Big]t \biggr)^{\hspace*{-4pt}(-1)^{e_{d_i,J}}\cdot m_i} \hspace*{-45pt} \cdot y,$$
where $e_{d_i,J} = d_i +1 - \lvert J \rvert$, is well-defined and factorizes through the quotient map $\mathbb{Z}^{\oplus S_n(F)} \twoheadrightarrow K_n^{\MW}(F)$.
\end{prop}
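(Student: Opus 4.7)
The map is given by an explicit formula, so well-definedness on $\mathbb{Z}^{\oplus S_n(F)}$ is automatic, and only the factorization through $K_n^{\MW}(F)$ needs to be shown. Writing $\Lambda^n(\cdot)$ for the product-formula part (before multiplication by $y$), this part is a group homomorphism from $\mathbb{Z}^{\oplus S_n(F)}$ to the multiplicative group of invertible power series $1 + t K^{\MW}_*(F)[[t]]$, and the factorization amounts to showing that for every element $r$ in the kernel of $\mathbb{Z}^{\oplus S_n(F)} \twoheadrightarrow K_n^{\MW}(F)$, each coefficient of $t^l$ ($l \geq 1$) in $(\Lambda^n(r) - 1) \cdot y$ vanishes in $M_*(F)$. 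By the presentation in Lemma \ref{Standard presentation of positive degree Milnor-Witt K-theory}, this reduces to verifying three relation types: (i) Steinberg, (ii) twisted tensor, and (iii) Witt.

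The conceptual backbone is that the formula for $\Lambda^n$ on a single symbol $\eta^d[a_1, \ldots, a_{d+n}]$ corresponds to the decomposition
\[
\eta^d[a_1, \ldots, a_{d+n}] = \sum_{J \subset \{1, \ldots, d+1\}} (-1)^{d+1-|J|} \Big[\prod_{j \in J} a_j, a_{d+2}, \ldots, a_{d+n}\Big]
\]
in $K_n^{\MW}(F)$, an equality obtained by repeated application of (MW2), combined with the rule $[c_1, \ldots, c_n] \mapsto 1 + [c_1, \ldots, c_n] t$ under additive-to-multiplicative extension. With this lens, the twisted tensor relation (ii) reduces to bookkeeping in the two subcases $i \leq d+1$ and $i \geq d+2$: in each case, unfolding both sides via the displayed decomposition yields matching products, with the $\eta^{d+1}$-term on the right-hand side supplying exactly the factor needed.

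For the Steinberg relation (i), I distinguish three cases by the position $i$ of the Steinberg pair. When $i \geq d+2$, both $a_i$ and $a_{i+1}$ lie in the fixed tail of every symbol $[\prod_J a_j, a_{d+2}, \ldots]$, so each factor collapses to $1$. When $i = d+1$, the identity $[a_{d+1} \prod_J a_j, a_{d+2}, \ldots] = [\prod_J a_j, a_{d+2}, \ldots]$ in $K_n^{\MW}(F)$, a consequence of (MW2) and $[a_{d+1}, a_{d+2}] = 0$, makes the factors indexed by $J$ and $J \cup \{d+1\}$ cancel. When $i \leq d$, I use the identity $[a_i a_{i+1} P, \ldots] = [a_i P, \ldots] + [a_{i+1} P, \ldots] - [P, \ldots]$, itself a consequence of (MW2) and $[a_i, a_{i+1}] = 0$, together with repeated $\epsilon$-graded commutativity to bring the pair $(a_i, a_{i+1})$ adjacent inside longer products where Steinberg annihilates them; this yields $\Lambda^n(r) = 1$ in $K^{\MW}_*(F)[[t]]^\times$.

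The main obstacle is the Witt relation (iii). Here I would compute the product $\Lambda^n(2\eta^{d+1}[a_1, \ldots, \hat{a}_i, \ldots, a_{d+n+1}]) \cdot \Lambda^n(\eta^{d+2}[a_1, \ldots, -1, \ldots, a_{d+n+1}])$ explicitly and show that every coefficient of $t^l$ ($l \geq 1$) in $\Lambda^n(r) - 1$ either lies in $h K_*^{\MW}(F)$ (when $n$ is odd) or vanishes outright (when $n$ is even). For $n$ odd these contributions annihilate $y \in \prescript{}{h} M_*(k)$ under the module action, while for $n$ even they vanish before acting on $y$, consistent with $\delta_n = 0$. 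The key algebraic inputs are $[-a] = [-1] + [a] + \eta[-1, a]$, $[a, -a] = 0$, $\epsilon$-graded commutativity, and $\eta h = 0$. Tracking these $h$-contributions uniformly in $l$ is the technical heart of the proof; I expect an inductive, coefficient-by-coefficient analysis, organized by expanding each symbol via the product formula and collecting terms according to the decomposition $h = 2 + \eta[-1]$, to be the cleanest route.
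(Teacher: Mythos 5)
Your overall route --- reduce to the standard presentation of Lemma \ref{Standard presentation of positive degree Milnor-Witt K-theory} and check the Steinberg, twisted tensor and Witt relations on the product formula --- is the same as the paper's, and your motivating identity writing $\eta^d[a_1,\dotsc,a_{d+n}]$ as an alternating sum over subsets $J \subset \lbrace 1,\dotsc,d+1\rbrace$ is indeed the design principle behind $\Lambda^n$. But your opening claim is a genuine gap: well-definedness is \emph{not} automatic, and the product part is \emph{not} a group homomorphism into $1 + tK^{\MW}_*(F)[[t]]$ when $n$ is odd. The factors $1 + [\,\cdot\,]t$ involve symbols of degree $n$, which only $\epsilon$-commute, so the double product over $i$ and $J$ (with exponents $m_i$ of either sign) has no preferred order, and a map out of the free abelian group $\mathbb{Z}^{\oplus S_n(F)}$ defined on basis elements is only well-defined if the images pairwise commute. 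This is exactly where the hypothesis $y \in \prescript{}{\delta_n h}M_*(k)$ enters \emph{first} in the paper: $hy = 0$ gives $\epsilon\, y = \langle 1 \rangle y = y$, so all the products become order-independent once they act on $y$ (for even $n$ one instead uses that $K^{\MW}_{2*}$ is commutative); your proposal never uses this. As a consequence, several of your intermediate assertions, e.g.\ that $\Lambda^n(r) = 1$ in $K^{\MW}_*(F)[[t]]^\times$ for a Steinberg generator with $i \leq d$, are not valid as stated for odd $n$ before multiplying by $y$: the rearrangements of non-adjacent factors you invoke produce $h$-multiples (since $\epsilon = 1 - h$), and these only die after acting on $y$. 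Relatedly, for a quadruple of factors indexed by $I$, $I\cup\lbrace 1\rbrace$, $I\cup\lbrace 2\rbrace$, $I\cup\lbrace 1,2\rbrace$ to cancel one must match both the linear \emph{and} the quadratic coefficients; your identity $[a_ia_{i+1}P,\dotsc] = [a_iP,\dotsc] + [a_{i+1}P,\dotsc] - [P,\dotsc]$ handles only the linear one, whereas the quadratic comparison needs a separate Steinberg manipulation, which the paper carries out explicitly.

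The second gap is the Witt relation, which you correctly identify as the technical heart but leave as a plan. The paper actually does the computation: after the cancellation between the $\eta^{d+2}$- and $2\eta^{d+1}$-parts of the generator one is left (in the commutative-on-$y$ setting) with a product of paired factors involving $[P,a_{\underline{d}}]$ and $[-P,a_{\underline{d}}]$, which via $[P] + [-P] = [-1] + h[P]$, the vanishing $h[-1] = 0$, and $[a,a] = [a,-1]$ reduces to a quotient of two expressions that coincide by a parity count of the subsets $J$; hence the generator maps to $1 \cdot y$ exactly, for both parities of $n$, with no residual coefficient-by-coefficient torsion analysis needed (the torsion hypothesis on $y$ is consumed entirely by the commutativity issue above, not by the Witt relation). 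So your expectation that the coefficients of $\Lambda^n(r) - 1$ merely lie in $hK^{\MW}_*(F)$ for odd $n$ both understates the conclusion and leaves the decisive computation unwritten; as it stands the proposal does not yet prove the proposition.
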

\begin{proof}
If $n$ is odd, we have $y = \langle 1 \rangle \cdot y = \epsilon \cdot y$ since $y \in \prescript{}{h} M_*(k)$. Therefore the products in $\Lambda^n \cdot y$ are independent of their order, which results in the well-definedness of this map. If $n$ is even, $\Lambda^n \cdot y$ is well-defined without restrictions on $y$ by the fact that $K^{\MW}_{2*}$ is commutative. Therefore we are either way in a commutative setting and will from now on freely change the order within the occuring products.

To show that $\Lambda^n \cdot y$ factorizes through the quotient map $\mathbb{Z}^{\oplus S_n(F)} \twoheadrightarrow K_n^{\MW}(F)$, we need to verify that all generators of the three relations from Lemma \ref{Standard presentation of positive degree Milnor-Witt K-theory} act trivially on $y$. Let $\eta^d[a_1,\dotsc,a_{d+n}] \in \mathbb{Z}^{\oplus S_n(F)}$ be a generator of the Steinberg relation, i.e. we have that $a_{i+1} = 1 - a_i$ for some $1 \leq i \leq d+n-1$. If $i \ leq d$, we can permute the $a_j$'s in the image and may thus assume that $i = 1$. Denoting the tuple $(a_{d+2},\dotsc,a_{d+n})$ by $a_{\underline{d}}$, the product
$$\Lambda^n \cdot y(\eta^d[a_1,\dotsc,a_{d+n}]) = \prod_{J \subset \lbrace 1,\dotsc, d+1 \rbrace} \biggl( 1 + \Big[ \prod_{j \in J}a_j,a_{\underline{d}} \Big]t \biggr)^{\hspace*{-4pt}(-1)^{e_{d,J}}}\hspace*{-29pt} \cdot y$$
can be rewritten as
$$\prod_{I \subset \lbrace 3,\dotsc, d+1 \rbrace} \frac{\left( 1 + \Big[ \prod_{i \in I}a_i,a_{\underline{d}} \Big]t \right)^{\hspace*{-2pt}(-1)^{e_{d,I}}}\left( 1 + \Big[ a_1a_2\prod_{i \in I}a_i,a_{\underline{d}} \Big]t \right)^{\hspace*{-2pt}(-1)^{e_{d,I}}}}{\left( 1 + \Big[ a_1\prod_{i \in I}a_i,a_{\underline{d}} \Big]t \right)^{\hspace*{-2pt}(-1)^{e_{d,I}}}\left( 1 + \Big[ a_2\prod_{i \in I}a_i,a_{\underline{d}} \Big]t \right)^{\hspace*{-2pt}(-1)^{e_{d,I}}}} \cdot y.$$
It therefore suffices to show that
$$(1+[b]t)(1+[a(1-a)b]t) = (1+[(1-a)b]t)(1-[ab]t)$$
for all $a,b \in F^\times$, where we are in a commutative setting. This amounts to showing the equality of the linear and quadratic coeffients of both sides. Using the Steinberg relation, the linear coefficient on the left hand side is
$$[b] + [a(1-a)b] = [b] + [1-a] + [ab] + \eta[1-a,ab] = [b] + [1-a] + [ab] + \eta[1-a,b],$$ 
which coincides with $[(1-a)b] + [ab]$, the one from the right hand side. The quadratic one on the left hand side is
$$[b,a(1-a)b] = [b,1-a] + [b,ab] + \eta[b,1-a,ab] = [b,1-a] + [b,ab] + \eta[b,1-a,b],$$
whereas the quadratic coefficient on the right hand side is
$$[(1-a)b,ab] = \eta[1-a,b,ab] + [1-a,ab] + [b,ab] = \eta[1-a,b,b] + [1-a,b] + [b,ab].$$
Since we are in a commutative setting, these two agree. If $i = d+1$, the same argument works, but one cannot ignore the contributions of $a_{\underline{d}}$. Finally, if $i \geq d+2$, the statement is clear. Therefore the map 
$\Lambda^n \cdot y$ factorizes through the quotient map $\mathbb{Z}^{\oplus S_n(F)} \twoheadrightarrow \mathbb{Z}^{\oplus S_n(F)}/ R_{\st}$, where $R_{\st}$ is the subgroup defined by the Steinberg relation. By abuse of notation we still denote the induced map on the quotient $\mathbb{Z}^{\oplus S_n(F)}/ R_{\st} \rightarrow M_*(F)[[t]]$ by $\Lambda^n \cdot y$. Let us now verify that the twisted tensor relation is respected. For this we consider a generator 
\begin{align*}
\eta^d[a_1,\dotsc,a_{i-1},bb',a_{i+1},\dotsc,a_{d+n}] &- \eta^d[a_1,\dotsc,a_{i-1},b,a_{i+1},\dotsc,a_{d+n}] \\ &- \eta^d[a_1,\dotsc,a_{i-1},b',a_{i+1},\dotsc,a_{d+n}] \\ &- \eta^{d+1}[a_1,\dotsc,a_{i-1},b,b',a_{i+1},\dotsc,a_{d+n}]
\end{align*}
of the twisted tensor relation in $\mathbb{Z}^{\oplus S(F)}/ R_{\st}$ and set $D_i = \lbrace 1,\dotsc, d+1 \rbrace \setminus \lbrace i \rbrace$. Using that we are in a commutative setting, we may once again assume that $i = 1$ since the case $i \geq d+2$ is trivial. Furthermore, we can ignore the contribution of the tuple $(a_{d+2},\dotsc,a_{d+n}) = a_{\underline{d}}$ as seen above. This reduces the task to showing that the product of
$$\prod_{J \subset D_i} \frac{\left( 1 + \Big[ \prod_{j \in J}a_j \Big]t \right)^{\hspace*{-2pt}(-1)^{e_{d,J}}}\hspace*{-6pt}\left( 1 + \Big[ b\prod_{j \in J}a_j \Big]t \right)^{\hspace*{-2pt}(-1)^{e_{d,J}}}\hspace*{-6pt}\left( 1 + \Big[ b'\prod_{j \in J}a_j \Big]t \right)^{\hspace*{-2pt}(-1)^{e_{d,J}}}}{\left( 1 + \Big[ bb'\prod_{j \in J}a_j \Big]t \right)^{\hspace*{-2pt}(-1)^{e_{d,J}}}\hspace*{-6pt}\left( 1 + \Big[ \prod_{j \in J}a_j \Big]t \right)^{\hspace*{-2pt}(-1)^{e_{d,J}}}\hspace*{-6pt}\left( 1 + \Big[ \prod_{j \in J}a_j \Big]t \right)^{\hspace*{-2pt}(-1)^{e_{d,J}}}}$$
and
$$\prod_{J \subset D_i} \frac{\left( 1 + \Big[ b\prod_{j \in J}a_j \Big]t \right)^{\hspace*{-2pt}(-1)^{e_{d+1,J}}}\left( 1 + \Big[ b'\prod_{j \in J}a_j \Big]t \right)^{\hspace*{-2pt}(-1)^{e_{d+1,J}}}}{\left( 1 + \Big[ \prod_{j \in J}a_j \Big]t \right)^{\hspace*{-2pt}(-1)^{e_{d+1,J}}}\left( 1 + \Big[ bb'\prod_{j \in J}a_j \Big]t \right)^{\hspace*{-2pt}(-1)^{e_{d+1,J}}}}$$
is $1$, which it clearly is. This gives us an induced map $\mathbb{Z}^{\oplus S_n(F)}/ R_{\st,\tt } \rightarrow M_*(F)[[t]],$ which we will still denote by $\Lambda^n \cdot y$. Here $R_{\st,\tt}$ is the subgroup defined by the generators of the Steinberg and twisted tensor relation. Finally, let us check the Witt relation. We pick a generator 
$$\eta^{d+2}[a_1,\dotsc,a_{i-1},-1,a_{i+1},\dotsc,a_{d+2+n}] +2\eta^{d+1}[a_1,\dotsc,a_{i-1},a_{i+1},\dotsc,a_{d+2+n}]$$ 
considered as an element of $\mathbb{Z}^{\oplus S_n(F)}/ R_{\st,\tt}$ and set $D_i = \lbrace 1,\dotsc, d+3 \rbrace \setminus \lbrace i \rbrace$ and $a_{\underline{d}} = (a_{d+4},\dotsc,a_{d+2+n})$. As before, we can reduce to the case that $i = 1$. After cancellation, this generator is now mapped to 
$$\prod_{J \subset D_i} \biggl( 1 + \Big[ -\prod_{j \in J}a_j,a_{\underline{d}} \Big]t \biggr)^{\hspace*{-4pt}(-1)^{e_{d+1,J}}}\hspace*{-5pt}\biggl( 1 + \Big[ \prod_{j \in J}a_j,a_{\underline{d}} \Big]t \biggr)^{\hspace*{-4pt}(-1)^{e_{d+1,J}}} \hspace*{-37pt} \cdot y,$$
which agrees with
$$\prod_{J \subset D_i} \biggl(1 + \biggl( \Big[ \prod_{j \in J}a_j,a_{\underline{d}} \Big] + \Big[ -\prod_{j \in J}a_j,a_{\underline{d}} \Big] \biggr) t \biggr)^{\hspace*{-4pt}(-1)^{e_{d+1,J}}} \hspace*{-37pt} \cdot y$$
by the fact that $[a,-a] = 0$ for all $a \in F^\times$. We also have 
$$[a] + [-a] = [a] + [-1] + [a] + \eta[a,-1] = [-1] + [a](2 + \eta[-1]) = [-1] + [a]h$$
for all $a \in F^\times$, which yields 
$$\Big[ \prod_{j \in J}a_j,a_{\underline{d}} \Big] + \Big[ -\prod_{j \in J}a_j,a_{\underline{d}} \Big] = [-1,a_{\underline{d}}]  + \Big[\prod_{j \in J}a_j,a_{\underline{d}} \Big]h = [-1,a_{\underline{d}}]  + h\sum_{j \in J}[a_j,a_{\underline{d}}]$$
for all $J \subset D_i$. The later summand does not contribute outside of degree $1$ since $[-1] \in \ker(h)$ and $[a,a] = [a,-1]$ for all $a \in F^\times$. Thus we are left with
$$\frac{\sum_{d-\lvert J \rvert \text{ even}}\sum_{j \in J}h[a_j,a_{\underline{d}}]t+\prod_{d-\lvert J \rvert \text{ even}} \left( 1 + [-1,a_{\underline{d}}]t \right)}{\sum_{d-\lvert J \rvert \text{ odd}}\sum_{j \in J}h[a_j,a_{\underline{d}}]t+\prod_{d-\lvert J \rvert \text{ odd}}\left( 1 + [-1,a_{\underline{d}}]t \right)} \cdot y,$$
which by a simple counting argument is $1 \cdot y = y$. This finishes the proof.
\end{proof}
From the definition of $\Lambda^n \cdot y$ it is clear that this map is functorial in $F$ since $y$ is defined over $k$. Not clear may be how $\Lambda^n \cdot y$ relates to the divided powers as introduced before. 
\begin{de}\label{Definition of lamda}
Let $l$ be a non-negative integer and let $y \in \prescript{}{\delta_nh} M_*(k)$ for some homotopy module $M_*$. The $l$-th divided power operation on $K_n^{\MW}$ associated to $y$ is the operation $K_n^{\MW} \rightarrow M_*$ given by taking the coefficient of $\Lambda^n \cdot y(x)$ of degree $l$ for all elements $x \in K_n^{\MW}(F)$ and all field extensions $k \subset F$. 
\end{de}
We denote the $l$-th divided power operation on $K_n^{\MW}$ associated to $y$ by $\lambda^n_l \cdot y$, which as before is not only a notation, but allows us to work with the non-defined operation $\lambda^n_l$ as long as we act on $y \in \prescript{}{\delta_nh} M_*(k)$ in the end. Of course, $\lambda^n_0 = 1$ and $\lambda^n_1 = \id$ are defined for all $y$. Furthermore we will just refer to an $l$-th divided power on $K_n^{\MW}$ when speaking about $\lambda^n_l \cdot y$ for some homotopy module $M_*$ and $y \in \prescript{}{\delta_nh} M_*(k)$. By its definition, we get:
\begin{prop}\label{lambda of sum}
Let $k \subset F$ be a field extension. We have 
$$\lambda^n_l \cdot y(x+x') = \sum_{i = 0}^l \lambda^n_i(x)\lambda^n_{l-i}(x') \cdot y$$
for all elements $x,x' \in K_n^{\MW}(F)$.
\end{prop}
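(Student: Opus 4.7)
The strategy is to exploit the fact that the formula defining $\Lambda^n \cdot y$ in Proposition \ref{Definition of Lambda} puts the integer multiplicities $m_i$ of the generators $\eta^{d_i}[a_{i,1},\dotsc,a_{i,d_i+n}]$ into the exponents of the factors $(1 + [\dotsc]t)^{\pm 1}$, so that it converts sums in $\mathbb{Z}^{\oplus S_n(F)}$ into products in the formal power series ring. First I consider the variant
$$\Lambda^n \colon \mathbb{Z}^{\oplus S_n(F)} \longrightarrow K^{\MW}_*(F)[[t]]^{\times}$$
obtained by dropping the final multiplication by $y$. By inspection, adding elements in $\mathbb{Z}^{\oplus S_n(F)}$ corresponds to adding exponents, hence to multiplying factors, so $\Lambda^n$ is a homomorphism from an additive group to a multiplicative group of units. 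The possible non-commutativity of $K^{\MW}_*(F)$ when $n$ is odd is harmless: all orderings of the factors agree after acting on $y \in \prescript{}{h}M_*$, since $\epsilon y = y$, exactly as already used in the proof of Proposition \ref{Definition of Lambda}.

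Next, choose lifts $\tilde x, \tilde x' \in \mathbb{Z}^{\oplus S_n(F)}$ of $x$ and $x'$; then $\tilde x + \tilde x'$ lifts $x+x'$. Using that $\Lambda^n \cdot y$ factors through $K^{\MW}_n(F)$, we obtain
$$\Lambda^n \cdot y(x+x') \;=\; \Lambda^n(\tilde x + \tilde x') \cdot y \;=\; \Lambda^n(\tilde x)\,\Lambda^n(\tilde x') \cdot y.$$
Extracting the $t^l$-coefficient via the Cauchy product yields
$$\lambda^n_l \cdot y(x+x') \;=\; \sum_{i=0}^{l} \tilde\lambda^n_i(\tilde x)\,\tilde\lambda^n_{l-i}(\tilde x') \cdot y,$$
where $\tilde\lambda^n_j(\tilde z) \in K^{\MW}_{nj}(F)$ denotes the $t^j$-coefficient of $\Lambda^n(\tilde z)$.

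It remains to identify each summand with $\lambda^n_i(x)\,\lambda^n_{l-i}(x') \cdot y$. The key observation is that the proof of Proposition \ref{Definition of Lambda} uses only that the acted-on element is $\delta_n h$-torsion, not that it is defined over $k$. Therefore $\Lambda^n \cdot z$ factors through $K^{\MW}_n(F)$ for every $z \in \prescript{}{\delta_n h}M_*(F)$. Applying this with $z = \lambda^n_{l-i}(x') \cdot y$, which lies in $\prescript{}{\delta_n h}M_*(F)$ because $y$ does and the $K^{\MW}_*$-action commutes with multiplication by $h$, shows that $\tilde\lambda^n_i(\tilde x)\,\lambda^n_{l-i}(x')\cdot y$ is independent of the lift $\tilde x$ and equals $\lambda^n_i(x)\,\lambda^n_{l-i}(x') \cdot y$; the symmetric argument handles the dependence on $\tilde x'$. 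The entire proof is a book-keeping exercise with no genuine difficulty; the only mildly delicate point is this lift-independence remark, which is why I emphasize it as the main step.
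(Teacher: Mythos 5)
Your argument is correct and is essentially the paper's own (the paper treats the identity as immediate from the definition, since $\Lambda^n\cdot y$ sends sums of lifts in $\mathbb{Z}^{\oplus S_n(F)}$ to products of power series, and comparing $t^l$-coefficients gives the Cauchy-product formula). Your extra remark that the well-definedness argument of Proposition \ref{Definition of Lambda} only needs the acted-on element to be $\delta_n h$-torsion over $F$, so that each summand $\lambda^n_i(x)\lambda^n_{l-i}(x')\cdot y$ is independent of the chosen lifts, is a valid and welcome clarification rather than a departure.
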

\begin{cor}\label{lambda is divided power}
Let $k \subset F$ be a field extension. If 
$$x = [a_{1,1},\dotsc,a_{1,n}] + \dotsc + [a_{r,1},\dotsc,a_{r,n}]\in K_n^{\MW}(F)$$ 
is a sum of pure symbols, then 
$$\lambda^n_l \cdot y(x) = \sum_{1 \leq i_1 < \dotsc < i_l \leq r} [a_{i_1,1},\dotsc,a_{i_1,n}] \cdot \dotsc \cdot [a_{i_l,1},\dotsc,a_{i_l,n}] \cdot y.$$
\end{cor}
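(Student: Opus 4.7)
The plan is to proceed by induction on $r$, the number of pure symbols in the sum. For the base case $r = 1$, I would compute $\Lambda^n \cdot y$ directly from Proposition \ref{Definition of Lambda} on $x = [a_1,\dotsc,a_n]$, which corresponds to $d = 0$. The defining product is then indexed over the two subsets of $\{1\}$: the subset $J = \emptyset$ contributes $(1 + [1, a_2,\dotsc,a_n]t)^{-1} = 1$ since $[1] = 0$, while $J = \{1\}$ contributes $1 + [a_1,\dotsc,a_n]t$. Hence $\Lambda^n \cdot y(x) = (1 + xt) \cdot y$, and reading off coefficients gives $\lambda^n_0 \cdot y(x) = y$, $\lambda^n_1 \cdot y(x) = x \cdot y$, and $\lambda^n_l \cdot y(x) = 0$ for $l \geq 2$. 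This matches the claim, because for $r = 1$ there are no $l$-subsets of $\{1\}$ with $l \geq 2$.

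For the inductive step, I would write $x = x' + [a_{r,1},\dotsc,a_{r,n}]$, where $x'$ is the sum of the first $r-1$ pure symbols, and invoke Proposition \ref{lambda of sum}:
$$\lambda^n_l \cdot y(x) = \sum_{i = 0}^l \lambda^n_i(x') \, \lambda^n_{l-i}([a_{r,1},\dotsc,a_{r,n}]) \cdot y.$$
By the base case only the terms $i = l$ and $i = l-1$ contribute, yielding
$$\lambda^n_l(x') \cdot y + \lambda^n_{l-1}(x') \cdot [a_{r,1},\dotsc,a_{r,n}] \cdot y.$$
The inductive hypothesis identifies $\lambda^n_l(x') \cdot y$ with the sum of products over all $l$-subsets $\{i_1 < \dotsc < i_l\} \subset \{1,\dotsc,r-1\}$ and $\lambda^n_{l-1}(x')$ with the analogous sum over $(l-1)$-subsets. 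Appending $[a_{r,1},\dotsc,a_{r,n}]$ on the right of each summand in the second piece produces exactly the $l$-subsets of $\{1,\dotsc,r\}$ that contain $r$, while the first piece accounts for those that do not. Together they exhaust all $l$-subsets of $\{1,\dotsc,r\}$ in the prescribed order.

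The only delicate point is the ordering of factors in the non-commutative case (odd $n$). Since the new symbol carries the largest index and is inserted on the right of $\lambda^n_{l-1}(x')$, the order $i_1 < \dotsc < i_l$ is preserved automatically, and no $\epsilon$-factor appears. I do not anticipate a genuine obstacle: all real content is already packaged in Propositions \ref{Definition of Lambda} and \ref{lambda of sum}, and the present corollary is a bookkeeping induction.
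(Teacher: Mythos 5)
Your argument is correct and amounts to the same mechanism the paper relies on: for a sum of pure symbols (all $d_i=0$, $m_i=1$) the definition of $\Lambda^n\cdot y$ collapses to $\prod_{i=1}^r\bigl(1+[a_{i,1},\dotsc,a_{i,n}]t\bigr)\cdot y$, and the paper treats the corollary as immediate by reading off the coefficient of $t^l$, which is exactly what your base case plus induction via Proposition \ref{lambda of sum} reconstructs step by step. Your handling of the ordering in the non-commutative case (new symbol has the largest index and is appended on the right) is the right observation, so no essential content is missing.
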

This justifies the name and also explains why $\Lambda^n \cdot y$ is defined the way it is. An arbitrary element of $K^{\MW}_n(F)$ is first rewritten in terms of pure symbols and then one extends the desired formula from the previous corollary via Proposition \hyperref[lambda of sum]{\ref{lambda of sum}} to negative signs. The element $y$ is still needed for it to map to $M_*$ and to be well-defined in the case of odd $n$, of course.
\section{Shifts for Operations on Milnor-Witt K-theory}\label{Section 7}
In this section we will state the basic tools needed for our computations later. For these we will consider certain operations on Milnor K-theory. Let $M_*$ be a homotopy module and let $y \in M_*(k)$. For a positive integer $n$ and a non-empty ordered subset $\lbrace i_1, \dotsc,i_l \rbrace \subset \lbrace 1,\dotsc,n \rbrace$, we define the operation 
$$[-_{i_1},\dotsc,-_{i_l}] \cdot y \colon (K^{\MM}_1)^n \rightarrow M_*$$
by mapping tuples $(a_1,\dotsc,a_n) \in \Gm_m^n(F) \cong (K^{\MM}_1(F))^n$ to $[a_{i_1},\dotsc,a_{i_l}] \cdot y$ for every field extension $k \subset F$. Furthermore we set this operation to be the constant operation with value $1$ in the case that $l = 0$. These operations clearly commute with specialization maps and turn out to generate all such operations $(K^{\MM}_1)^n \rightarrow M_*$, essentially by Theorem 3.18 of Vial \cite{MR2497581} with minor adaptations to generalize to homotopy modules:
\begin{thm}[Vial]\label{Operations on (K_1^M)^r}
Let $M_*$ be a homotopy algebra and let $n$ be a positive integer. The $M_*(k)$-module $\Op_{\sp}((K^{\MM}_1)^n, M_*)$ of operations $(K^{\MM}_1)^n \rightarrow M_*$ commuting with specialization maps is given by the free $M_*(k)$-module $$\bigoplus_{l=0}^n \bigoplus_{1 \leq i_1 < \dotsc < i_l \leq n} [-_{i_1},\dotsc,-_{i_l}] \cdot M_*(k).$$
\end{thm}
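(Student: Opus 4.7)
The plan is to generalize Vial's proof of Theorem 3.18 of \cite{MR2497581} from the Milnor K-theory target to an arbitrary homotopy algebra $M_*$. The only properties of the target used there are the injectivity of multiplication by a transcendental (Proposition \ref{[t]x = 0}) and the splitting of Milnor's exact sequence (Proposition \ref{Milnors exact sequence for homotopy modules}), both of which are available in our setting, so Vial's argument should go through mutatis mutandis.

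For linear independence, I would evaluate a putative relation
$$\sum_{l = 0}^n \sum_{1 \leq i_1 < \dotsc < i_l \leq n} [-_{i_1},\dotsc,-_{i_l}] \cdot y_{i_1,\dotsc,i_l} = 0$$
at the generic tuple $(t_1,\dotsc,t_n) \in K^{\MM}_1(k(t_1,\dotsc,t_n))^n$. Iterated application of Proposition \ref{Milnors exact sequence for homotopy modules} decomposes $M_*(k(t_1,\dotsc,t_n))$ into direct summands indexed by subsets of $\{1,\dotsc,n\}$, placing each $[t_{i_1},\dotsc,t_{i_l}] \cdot y_{i_1,\dotsc,i_l}$ into a distinct summand. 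The injectivity of Proposition \ref{[t]x = 0} then lets one read off each $y_I$ separately and conclude that every $y_I$ vanishes.

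For spanning I would argue by induction on $n$; the base case $n = 0$ identifies $\Op_{\sp}$ with $M_*(k)$ via evaluation at the unique (empty) tuple. Given $\varphi$, the operation $\varphi_0 \colon (a_1,\dotsc,a_{n-1}) \mapsto \varphi(a_1,\dotsc,a_{n-1},1)$ is specialization-compatible in $n-1$ variables, so by induction decomposes as $\sum_{I \subseteq \{1,\dotsc,n-1\}} [-_{i_1},\dotsc,-_{i_l}] \cdot y_I$. Subtracting the corresponding operations (viewed on $(K^{\MM}_1)^n$ by ignoring the last coordinate) reduces us to the case where $\varphi(a_1,\dotsc,a_{n-1},1) = 0$ identically. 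The key step is then to show that such a $\varphi$ factors as $[-_n] \cdot \psi$ for a specialization-compatible operation $\psi$ on $(K^{\MM}_1)^{n-1}$, after which a final appeal to the inductive hypothesis produces the claimed decomposition.

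The main difficulty is precisely this factoring step. Setting $E = k(t_1,\dotsc,t_{n-1})$, I would evaluate $\varphi$ at $(t_1,\dotsc,t_n) \in K^{\MM}_1(E(t_n))^n$ and apply Proposition \ref{Milnors exact sequence for homotopy modules} over the extension $E \subset E(t_n)$ to decompose
$$\varphi(t_1,\dotsc,t_n) = u + [t_n] w + r$$
with $u \in M_*(E)$, $w \in M_{*-1}(E)$, and $r$ supported on the residues at closed points of $\mathbb{A}^1_E$ distinct from the origin. The vanishing hypothesis, combined with specialization-compatibility under the maps $s^{t_n - a}_{v_{t_n - a}}$ for $a$ running over a sufficiently large family of elements (including $a$ transcendental over $E$, which picks up the higher closed points), forces $u = 0$ and $r = 0$. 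Proposition \ref{[t]x = 0} then identifies $w$ uniquely, and tracing the specialization-compatibility of $\varphi$ in the variables $t_1,\dotsc,t_{n-1}$ through this extraction shows that $w$ defines a specialization-compatible operation $\psi$ on $(K^{\MM}_1)^{n-1}$, closing the induction.
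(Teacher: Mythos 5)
Your overall skeleton (induction on $n$, Milnor's exact sequence, Proposition \ref{[t]x = 0}) matches the paper's, and your linear-independence argument via the generic tuple and iterated residues is essentially the paper's. The genuine gap is exactly at the step you flag as the main difficulty: showing that $\varphi(t_1,\dotsc,t_n)$ is unramified at the closed points of $\mathbb{A}^1_E$ away from the origin (your ``$r=0$''). You propose to force this using compatibility with the specializations $s^{t_n-a}_{v_{t_n-a}}$ for $a$ in a large family including $a$ transcendental over $E$, but these cannot see the residues at closed points with nontrivial residue field extension: for transcendental $a$, base change to $E(a,t_n)$ followed by $s^{t_n-a}_{v_{t_n-a}}$ is just the canonical identification $M_*(E(t_n)) \cong M_*(E(a))$, so the commutation constraint is automatically satisfied by naturality and imposes no condition on $r$; at non-generic rational points $t_n=c$ the constraint involves the unknown values $\varphi(t_1,\dotsc,t_{n-1},c)$ and again does not pin down $r$. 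Knowing the specializations of an element at all rational points simply does not determine its residues at higher-degree closed points.

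What is actually needed --- and what Vial's argument and the paper's proof use --- is the freedom built into $\Op_{\sp}$ to vary the uniformizer at the valuation $v_p$ of the higher closed point itself. One adjoins a fresh transcendental $u$, works over $E(u,t_n)$, and compares the specializations $s^{p}_{v_p}$ and $s^{-up}_{v_p}$: since the $t_i$ are units at $v_p$, both specialize the source tuple identically, so commutation of $\varphi$ with both, together with $s^{-up}_{v_p} = s^{p}_{v_p} + \epsilon[\overline{-u}]\partial^{p}_{v_p}$ from Proposition \ref{Properties of residue und specialization maps}(iii), gives $\epsilon[\overline{-u}]\,\partial^{p}_{v_p}(\varphi(\underline{t})) = 0$; applying $\partial^{-u}_{\nu_{-u}}$ to isolate the residue term and using injectivity of the base-change map from Proposition \ref{Milnors exact sequence for homotopy modules} then yields $\partial^{p}_{v_p}(\varphi(\underline{t})) = 0$. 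Without this two-uniformizer trick your factoring step does not go through. Two smaller points: once $r=0$, your extraction of $u=0$ and of $w$ is fine, but the assertion that $w$ defines a specialization-compatible operation in $n-1$ variables still needs an argument; the paper sidesteps both issues by instead freezing the first $n$ variables over an arbitrary field $F$ and applying the one-variable classification over $F$, so that the coefficient assignments $x \mapsto a_x,\, b_x$ are visibly operations to which the inductive hypothesis applies.
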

\begin{proof}
This can indeed be proven like Theorem 3.18 and the third step of Theorem 3.4 from \cite{MR2497581}. Since the given proof does not contain all details, we will give the full argument for the convenience of the reader and begin with the case $n = 1$.

\medskip

Step 1: An operation $\varphi \colon K^{\MM}_1 \rightarrow M_*$ commuting with specialization maps is determined by its image at a single transcendental element over $k$.

Let $\varphi \in \Op_{\sp}(K^{\MM}_1, M_*)$, let $F$ and $F'$ be a field extensions of $k$ and let $t \in F$ be a transcendental element over $k$. Furthermore let $a \in F'$. If $a$ is transcendental over $k$, we have a canonical isomorphism $f \colon k(t) \rightarrow k(a)$ given by substituting $a$ for $t$, so that $\varphi(\lbrace a \rbrace) \in M_*(k(a))$ is determined by $\varphi(\lbrace t \rbrace)  \in M_*(k(t))$ via the induced isomorphism $f_* \colon M_*(k(t)) \rightarrow M_*(k(a))$. If $a$ is algebraic over $k$, we pick a transcendental element $u$ over $F'$ and consider the square
\begin{center}
\begin{tikzcd}
 K^{\MM}_1(F'(u)) \arrow[r, "\varphi"] \arrow[d, "s_{\nu_u}^u"] & M_*(F'(u)) \arrow[d, "s_{\nu_u}^u"]\\
 K^{\MM}_1(F') \arrow[r, "\varphi"] & M_*(F')
\end{tikzcd}
\end{center}
which by our assumptions on $\varphi$ is commutative. We get
$$\varphi(\lbrace a \rbrace) = \varphi(s_{\nu_u}^u(\lbrace au \rbrace)) = s_{\nu_u}^u(\varphi(\lbrace au \rbrace))$$
and $au \in F'(u)$ is transcendental over $k$, so that $\varphi(\lbrace a \rbrace)$ is once again determined by $\varphi(\lbrace t \rbrace)$. 

\medskip

Step 2: The image of $\varphi$ at our fixed transcendental element $t$ is unramified at all closed points of $\mathbb{P}^1_{k(t)} \! \setminus \lbrace 0, \infty \rbrace$. 

Let $u$ be another transcendental element over $k$ and consider the field extension $k(u)$ of $k$. We now pick a monic irreducible polynomial $p \in k[t]$ with $p \neq t$ and view it as a polynomial with coefficients in $k(u)$ via the  inclusion $k \hookrightarrow k(u)$. The element $-up \in k(u,t)$ is then a uniformizer for the corresponding discrete valuation $\nu_p$ on $k(u,t)$ and we consider the commutative square
\begin{center}
\begin{tikzcd}
 K^{\MM}_1(k(u,t)) \arrow[r, "\varphi"] \arrow[d, "s_{\nu_p}^{-up}"] & M_*(k(u,t)) \arrow[d, "s_{\nu_p}^{-up}"]\\
 K^{\MM}_1(\kappa_p(u)) \arrow[r, "\varphi"] & M_*(\kappa_p(u)).
\end{tikzcd}
\end{center}
We denote by $i$ the inclusion $k(t) \hookrightarrow k(u,t)$. Since $\varphi$ and, by Lemma 3.16 of \cite{MR2497581} also $i_*$, commute with specialization maps, we now have
$$s_{\nu_p}^{-up}(\varphi(i_*(\lbrace t \rbrace))) = s_{\nu_p}^{-up}(i_*(\varphi(\lbrace t \rbrace))) = i_*(\varphi(s_{\nu_p}^{-up}(\lbrace t \rbrace))) = i_*(\varphi(\lbrace \overline{t} \rbrace )),$$
which is also given by
$$s^p_{\nu_p}(i_*(\varphi(\lbrace t \rbrace))) + \epsilon [ \overline{-u}] \partial^{p}_{\nu_p}(i_*(\varphi(\lbrace t \rbrace))) = i_*(\varphi(\lbrace \overline{t} \rbrace)) + \epsilon [ \overline{-u}] \partial^{p}_{\nu_p}(i_*(\varphi(\lbrace t \rbrace))).$$
according to Lemma \ref{Properties of residue und specialization maps}. Applying $\partial_{\nu_{-u}}^{-u} \colon M_*(\kappa_p(u)) \rightarrow M_{*-1}(\kappa_p)$ thus yields
$$\epsilon i_*(\partial_{\nu_p}^p(\varphi(\lbrace t \rbrace))) = \epsilon \partial_{\nu_p}^p(i_*(\varphi(\lbrace t \rbrace))) = 0$$ and therefore $i_*(\partial_{\nu_p}^p(\varphi(\lbrace t \rbrace))) = 0$. Hence $\partial_{\nu_p}^p(\varphi(\lbrace t \rbrace)) = 0$ by the injectivity of $i_*$. In other words, $\varphi(\lbrace t \rbrace)$ is unramified at all closed points of $\mathbb{P}^1_{k(t)} \! \setminus \lbrace 0, \infty \rbrace$ as claimed.

Now we put everything together. We set $a = \partial^{\nu_t}_t(\varphi(\lbrace t \rbrace)) \in M_*(k)$ and consider the difference $\varphi(\lbrace t \rbrace) - [t]a \in M_*(k(t))$. This element is by construction unramified on $\mathbb{P}^1_{k(t)} \! \setminus \lbrace \infty \rbrace$, which by Milnor's exact sequence means that it is given by an element $b \in M_*(k)$ considered as an element in $M_*(k(t))$. Hence we have $\varphi(\lbrace t \rbrace) = [t]a + b$ for some suitable elements $b \in M_*(k)$ and $a \in M_*(k)$. Combining this with our first step, we see that there exist elements $a \in M_*(k)$ and $b \in M_*(k)$, so that $\varphi = [-]a + b$. Since assignments of this form certainly define operations, this finishes the proof for $n = 1$.

\medskip

Step 3: The general case.

We now conclude by induction on $n \geq 1$. The case $n = 1$ has already been treated. Let us therefore assume that the claim is true for all positive integers $l \leq n$ for some $n \geq 1$. Let $\varphi \in \Op_{\sp}((K^{\MM}_1)^{n+1}, M_*)$, let $k \subset F$ be a field extension and let $x \in (K_1^{\MM}(F))^n$ be a fixed element. Then $\varphi((x,-)) \colon K^{\MM}_1 \rightarrow M_*$ defines an operation over the field $F$, which we will denote by $\varphi_x$. By the previous step, there exist elements $a_x, b_x \in M_*(F)$ such that $\varphi_x = [-]a_x + b_x$. The assignments $x \mapsto a_x$ and $x \mapsto b_x$ define operations in $\Op_{\sp}((K^{\MM}_1)^n, M_*)$ and are by induction hypothesis hence given by $M_*(k)$-linear combinations of $[-_{i_1},\dotsc,-_{i_l}] \cdot 1$ for $0 \leq l \leq n$ and $1 \leq i_1 < \dotsc < i_l \leq n$. Together with  $\varphi_x = [-]a_x + b_x$ this implies that $\varphi$ is of the claimed form. It remains to show that the operations of the form $[-_{i_1},\dotsc,-_{i_l}] \cdot 1$ are linearly independent. Suppose 
$$\varphi = \sum_{l = 0}^n \sum_{1 \leq i_1 < \dotsc < i_l \leq n} [-_{i_1},\dotsc,-_{i_l}] \cdot \lambda_{i_1,\dotsc,i_l} = 0$$
where $\lambda_{i_1,\dotsc,i_l} \in M_*(k)$ for $0 \leq l \leq n$ and $1 \leq i_1 < \dotsc < i_l \leq n$. We fix one ordered subset $\lbrace j_1, \dotsc,j_s \rbrace \subset \lbrace 1, \dotsc, n \rbrace$ and consider the finitely generated field extension $k \subset k(t_{j_1},\dotsc,t_{j_s}) = F$. We set $t_j = 0$ for all $j \in \lbrace 1,\dotsc,n\rbrace \setminus \lbrace j_1,\dotsc,j_s \rbrace$ and let $\underline{t} = (t_1,\dotsc,t_n) \in (K^{\MM}_1(F))^n$. Then we have
$$\lambda_{j_1,\dotsc,j_s} =  \partial^{t_{j_s}}_{\nu_{t_{j_s}}} \circ \dotsc \circ \partial^{t_{j_1}}_{\nu_{t_{j_1}}}(\varphi(\underline{t})) = \partial^{t_{j_s}}_{\nu_{t_{j_s}}} \circ \dotsc \circ \partial^{t_{j_1}}_{\nu_{t_{j_1}}}(0) = 0,$$
which we had to show.
\end{proof}
\begin{rem}
This proof in particular shows that we do not need to distinguish between operations defined on all field extensions or only on finitely generated ones. Since Theorem \ref{Operations on (K_1^M)^r} is the very first ingredient of our computation, we will thus from now just speak about operations without specifying the underlying category of field extensions.
\end{rem}
We now consider the subfunctor $[-_1,\dotsc,-_n] \subset K^{\MW}_n$ which for every field extension $k \subset F$ is given by $[F^\times,\dotsc,F^\times]$, the pure symbols with entries from $F^\times$. Note that we use this notation both for this subfunctor and for the operations arising in the previous Theorem. According to Lemma \ref{Generators of positive Milnor-Witt K-theory groups}, this subfunctor encodes exactly the canonical generators of $K^{\MW}_n$ and our goal is to understand the operations on these generators. To express what it means for such operations to commute with specialization maps (which do not restrict to $[-_1,\dotsc,-_n]$), we do the following. Since $[-_1,\dotsc,-_n]$ is the image of the universal symbol  $u \colon \Gn \rightarrow K_n^{\MW}$, the operations $[-_1,\dotsc,-_n] \rightarrow M_*$ correspond to operations $\mathbb{G}_m^n \rightarrow M_*$ which factorize through $(K^{\MM}_1)^n = \mathbb{G}_m^n \twoheadrightarrow \Gn \rightarrow [-_1,\dotsc,-_n]$. The latter map will also be called universal symbol and denoted by $u$. We know what it means for the latter operations to commute with specialization maps and can restrict to those. This gives us a definition of $\Op_{\sp}([-_1,\dotsc,-_n],M_*)$, which we will now determine in light of the previous theorem.
\begin{thm}\label{Operations on (K_1^M)^r that factorize through the multiplication map}
For any homotopy algebra $M_*$ and any positive integer $n$, the $M_*(k)$-module $\Op_{\sp}([-_1,\dotsc,-_n],M_*)$ is free of rank $2$ generated by the constant operation $1$ and $[-_1,\dotsc,-_n] \cdot 1$.
\end{thm}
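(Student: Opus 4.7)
The plan is to identify $\Op_{\sp}([-_1,\dotsc,-_n], M_*)$ with the submodule of $\Op_{\sp}((K^{\MM}_1)^n, M_*)$ consisting of operations that descend along the universal symbol $u \colon \G^n \to [-_1, \dotsc, -_n]$, and then apply Vial's Theorem~\ref{Operations on (K_1^M)^r}. By that theorem, every $\varphi \in \Op_{\sp}((K^{\MM}_1)^n, M_*)$ has a unique expression
$$\varphi = \sum_{l = 0}^n \sum_{1 \leq i_1 < \dotsc < i_l \leq n} [-_{i_1}, \dotsc, -_{i_l}] \cdot \lambda_{i_1, \dotsc, i_l}$$
with $\lambda_{i_1,\dotsc,i_l} \in M_*(k)$, and the task reduces to determining which coefficient tuples correspond to operations factoring through $u$.

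The crucial observation is that $[1] = 0$ in $K_*^{\MW}$ by Lemma~\ref{List of relations of Milnor-Witt K-theory}(i), so $u(a_1,\dotsc,1,\dotsc,a_n) = 0 = u(1,\dotsc,1)$ whenever a single entry equals $1$. Consequently, if $\varphi$ descends along $u$, its value at any tuple with $a_j = 1$ must coincide with $\varphi(1,\dotsc,1) = \lambda_\emptyset$. Substituting $a_j = 1$ into the decomposition above annihilates precisely the terms indexed by subsets containing $j$, so the residual operation $\sum_{S \not\ni j,\, S \neq \emptyset} [-_S] \cdot \lambda_S$ in the remaining $n - 1$ variables must vanish identically. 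Applying Vial's theorem in those $n - 1$ variables yields $\lambda_S = 0$ for every non-empty $S$ with $S \not\ni j$, and letting $j$ range over $\{1,\dotsc,n\}$ forces $\lambda_S = 0$ for every non-empty proper subset $S \subsetneq \{1,\dotsc,n\}$. This leaves $\varphi = \lambda_\emptyset \cdot 1 + [-_1,\dotsc,-_n] \cdot \lambda_{\{1,\dotsc,n\}}$.

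It remains to check that every such combination does factor through $[-_1,\dotsc,-_n]$, which is immediate since its value $\lambda_\emptyset + [a_1,\dotsc,a_n] \cdot \lambda_{\{1,\dotsc,n\}}$ depends on $(a_1,\dotsc,a_n)$ only through the symbol $[a_1,\dotsc,a_n] \in K_n^{\MW}$, and compatibility with specialization follows from the fact that specialization maps are ring homomorphisms intertwining Milnor-Witt symbols. For linear independence, assume $\lambda_\emptyset \cdot 1 + [-_1,\dotsc,-_n] \cdot y = 0$; evaluating at $(1,\dotsc,1)$ gives $\lambda_\emptyset = 0$, and evaluating at an $n$-tuple $(t_1,\dotsc,t_n)$ of algebraically independent transcendentals over $k$ gives $[t_1,\dotsc,t_n] \cdot y = 0$ in $M_*(k(t_1,\dotsc,t_n))$; iterating the injectivity statement of Proposition~\ref{[t]x = 0} by applying the residue maps $\partial^{t_1}_{\nu_{t_1}},\dotsc,\partial^{t_n}_{\nu_{t_n}}$ successively forces $y = 0$.

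The only real subtlety is the combinatorial bookkeeping in the first step: one must invoke Vial's theorem once for each of the $n$ hyperplane restrictions $a_j = 1$ and intersect the resulting vanishing conditions. Everything else follows directly from the freeness statement of Theorem~\ref{Operations on (K_1^M)^r}, the relation $[1] = 0$ in Milnor-Witt K-theory, and the injectivity of multiplication by $[t]$ recorded in Proposition~\ref{[t]x = 0}.
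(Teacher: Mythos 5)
Your proposal is correct, but it takes a genuinely different route from the paper. The paper proves the theorem by induction on $n$: it applies the one-variable case of Theorem \ref{Operations on (K_1^M)^r} to $\varphi(x,-_{n+1})$ over varying base fields $F$, then needs three separate steps — setting the last entry to $1$ to show the "constant part" $\psi$ is constant, a transcendental-element argument via Proposition \ref{[t]x = 0} to show the coefficient operation $\psi'$ itself factorizes through $u$ in $n$ variables (so that the induction hypothesis applies), and finally the Milnor--Witt relation $[t,t]=[t,-1]$ together with a residue map to kill the leftover cross term $[-_{n+1}]y$. You instead invoke the full rank-$2^n$ decomposition of Theorem \ref{Operations on (K_1^M)^r} in $n$ variables at once, and kill every coefficient $\lambda_S$ with $\emptyset \neq S \subsetneq \{1,\dotsc,n\}$ by restricting to the hyperplanes $a_j=1$ (using only $[1]=0$ and the linear-independence part of Vial's theorem in $n-1$ variables over $k$); the converse inclusion and independence of the two remaining generators are then immediate, exactly as in the paper. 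Your route is shorter, avoids induction and the relation $[a,a]=[a,-1]$, and only ever applies Vial's theorem over the base field $k$, whereas the paper's argument needs it over arbitrary extensions $F$; what the paper's inductive phrasing buys is mainly that it rehearses the "freeze all but one variable" and residue-map techniques that reappear in the shift machinery of Sections \ref{Section 7} and \ref{Section 8}. Two tiny remarks: the linear independence of $1$ and $[-_1,\dotsc,-_n]\cdot 1$ already follows from the freeness statement of Theorem \ref{Operations on (K_1^M)^r}, so your iterated residue argument, while fine (and in line with how Proposition \ref{[t]x = 0} is used elsewhere in the paper over larger base fields), is not needed; and for $n=1$ your hyperplane step is vacuous, with the statement then being literally the $n=1$ case of Vial's theorem, which is worth saying explicitly.
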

\begin{proof}
By definition, the $M_*(k)$-module $\Op_{\sp}([-_1,\dotsc,-_n],M_*)$ is the submodule of $\Op_{\sp}((K^{\MM}_1)^n  ,M_*)$ given by those operations which factorize through the universal symbol $u \colon (K^{\MM}_1)^n = \mathbb{G}_m^n \twoheadrightarrow \Gn \rightarrow [-_1,\dotsc,-_n]$. According to Theorem \ref{Operations on (K_1^M)^r} the $M_*(k)$-module $\Op_{\sp}((K^{\MM}_1)^n, M_*)$ is 
$$\bigoplus_{l=0}^n \bigoplus_{1 \leq i_1 < \dotsc < i_l \leq n} [-_{i_1},\dotsc,-_{i_l}]M_*(k)$$ and it is clear that its submodule $M_*(k) \oplus [-_1,\dotsc,-_n]M_*(k)$ consists of operations that factorize through $u$. It remains to show that these are the only ones, which we will do by induction on $n$. 

For $n = 1$ the statement coincides with the $n = 1$ case of Theorem \ref{Operations on (K_1^M)^r} and is thus already shown. We now assume that the statement is true up to some positive integer $n$. Let $\varphi \in \Op_{\sp}([-_1,\dotsc,-_{n+1}],M_*)$, let $k \subset F$ be a field extension and let $x \in (K^{\MM}_1(F))^n$. Then $\varphi_x = \varphi(x,-_{n+1})$ defines an operation $K^{\MM}_1 \rightarrow M_*$ defined over the field $F$, and is by the previous theorem hence given by $b_x + [-_{n+1}]b_x'$ for some elements $b_x, b_x' \in M_*(F)$. We now let $\psi$ and $\psi'$ denote the operations $(K^{\MM}_1)^n \rightarrow M_*$ over $k$, given by mapping $x \in (K^{\MM}_1(F))^n$ to $b_x$ and $b_x'$ respectively.

\medskip

Step 1: The operation $\psi$ is constant. In particular, the operation 
$[-_{n+1}]\psi'$ factorizes through $u$.

By the fact that $[1] = 0$ in Milnor-Witt K-theory, we have $[x,1] = [x',1]$ for all field extensions $k \subset F$ and all elements $x,x' \in (F^\times)^n = (K^{\MM}_1(F))^n$. Since the operation $\varphi = \psi + [-_{n+1}]\psi'$ factorizes through $u$, this gives us
$$\psi(x) = \psi (x) + [1]\psi'(x) = \varphi(x,1) = \varphi(x',1) = \psi (x') + [1]\psi'(x') = \psi(x').$$
In other words, the operation $\psi$ is constant. Therefore, if we consider $\psi$ as an operation $(K^{\MM}_1)^{n+1} \rightarrow M_*$, it factorizes through $u \colon (K^{\MM}_1)^{n+1} \rightarrow [-_1,\dotsc,-_{n+1}]$. Since the operations which factorize through $u$ are a $M_*(k)$-module and in particular a group, also $[-_{n+1}]\psi' = \varphi - \psi$ factorizes through $u$.

\medskip

Step 2: The operation $\psi'$ factorizes through $u \colon (K^{\MM}_1)^n \rightarrow [-_1,\dotsc,-_n]$.

Let $k \subset F$ be a field extension and let $a_1,\dotsc,a_n,a_1',\dotsc,a_n' \in (K^{\MM}_1(F))^n$ with $[a_1,\dotsc,a_n] = [a_1',\dotsc,a_n']$. Thus, if $t$ is a transcendental element over $F$, we have $[a_1,\dotsc,a_n,t] = [a_1',\dotsc,a_n',t] \in K_{n+1}^{\MW}(F(t))$. Since the operation $[-_{n+1}]\psi'$ facorizes through $u \colon (K^{\MM}_1)^n \rightarrow [-_1,\dotsc,-_n]$, we get $[t]\psi'(a_1,\dotsc,a_n) = [t]\psi'(a_1',\dotsc,a_n')$ and hence $\psi'(a_1,\dotsc,a_n) = \psi'(a_1',\dotsc,a_n')$ by Proposition \ref{[t]x = 0}.

\medskip

Step 3: The operation $\varphi$ is of the wanted form.

Using step 1 and 2 and the induction hypothesis, we know that there exist elements $x,y,z \in M_*(k)$ with
$$\varphi = x + [-_{n+1}](y + [-_1,\dotsc,-_n]z) = x + [-_{n+1}]y + [-_1,\dotsc,-_{n+1}] \epsilon^n z.$$
Renaming $\epsilon^n z = z' \in M_*(k)$, it remains to show that $y = 0$. Since both $\varphi$ and $\varphi - [-_{n+1}]y = x + [-_1,\dotsc,-_{n+1}] z'$ factorize through $u$, so does the operation $[-_{n+1}]y$. Let $k \subset F$ be a field extension and let $a_1,\dotsc,a_{n-1} \in F^\times$. Furthermore let $t$ be transcendental over $F$. Then we have 
$$[a_1,\dotsc,a_{n-1},t,t] = [a_1,\dotsc,a_{n-1},t,-1] \in K_{n+1}^{\MW}(F(t))$$ and hence $[t]y = [-1]y$. Applying the residue map $\partial_{\nu_t}^t$ now yields $y = 0$ since $[-1]y$ is defined over $F$.
\end{proof}
Now that we computed the operations on the generators of Milnor-Witt K-theory, we follow Garrel's strategy from \cite{MR4113769} and measure how an operation changes by adding or subtracting generators in order to understand all operations. For this we will now restrict to $\mathbb{N}$-graded homotopy algebras $M_*$. For these we consider the separated filtration given by $F_dM_* = M_{\geq d}$, where $d \geq 0$. Note that being $\mathbb{N}$-graded in particular gives us that the filtration pieces $F_dM_*$ define ideals in $M_*$. Furthermore the filtration endows the $M_*(k)$-module $\Op_{\sp}(K^{\MW}_n, M_*)$ with a separated filtration given by $$F_d\Op_{\sp}(K^{\MW}_n, M_*) = \Op_{\sp}(K^{\MW}_n, F_dM_*)$$ for all non-negative integers $d$.
\begin{rem}
Even though it is per se not an example of an $\mathbb{N}$-graded homotopy algebra, all of the following arguments will also work for the Witt ring $W$ together with the separated filtration given by powers of the fundamental ideal $I$ and the usual residue and specialization morphisms, as for example found in \cite{MR2427530}. Here the $K_*^{\MW}$-action is the multiplication action after passing to the quotient $K_*^{\W} \cong I^*$.
\end{rem}
Recall that $n$ is a positive integer. In the following proposition we will make use of the $n$-th negative shift of a filtration, which is commonly denoted by $[-n]$. We stress this to ensure that the reader does not confuse this shift with a symbol of Milnor-Witt K-theory. 
\begin{prop}\label{Shifts}
For all $\mathbb{N}$-graded homotopy algebras $M_*$, there exist unique morphisms 
$$\partial^{\pm} \colon \Op_{\sp}(K^{\MW}_n, M_*) \rightarrow \Op_{\sp}(K^{\MW}_n, M_*)[-n]$$
of filtered $M_*(k)$-modules, such that
$$\varphi(x \pm [\underline{a}]) = \varphi(x) \pm [\underline{a}]\partial^{\pm}(\varphi)(x)$$
for all $\varphi \in \Op_{\sp}(K^{\MW}_n, M_*)$, $x \in K^{\MW}_n(F)$, $\underline{a} \in (F^\times)^n$ and all field extensions $k \subset F$.
\end{prop}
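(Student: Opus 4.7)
Following Garrel's strategy in \cite{MR4113769}, the construction proceeds pointwise: given $\varphi$, $F$ and $x \in K^{\MW}_n(F)$, I extract $\partial^{+}(\varphi)(x)$ as the ``leading coefficient'' of the operation $\underline{a} \mapsto \varphi(x+[\underline{a}])-\varphi(x)$, regarded as an operation on $[-_1,\ldots,-_n]$ over $F$, via Theorem \ref{Operations on (K_1^M)^r that factorize through the multiplication map}. Uniqueness is immediate from Proposition \ref{[t]x = 0}: two candidates would differ by an operation $\psi$ with $[\underline{a}]\psi(x)=0$ for every $\underline{a}$ in every extension, and taking $\underline{a}=(t_1,\ldots,t_n)$ transcendental over $F$ and iterating the injectivity of $[t]\cdot(-)$ forces $\psi(x)=0$.

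\textbf{Existence.} For $\partial^{+}$, observe that the assignment $\psi^{+}_x \colon \underline{a} \mapsto \varphi(x+[\underline{a}])-\varphi(x)$ depends on $\underline{a}$ only through the element $[\underline{a}] \in K^{\MW}_n$, so it factorizes through the universal symbol $(K^{\MM}_1)^n \twoheadrightarrow [-_1,\ldots,-_n]$. Since $\varphi$ commutes with specializations and $x$ comes from $F$, the same is true of $\psi^{+}_x$, giving $\psi^{+}_x \in \Op_{\sp}([-_1,\ldots,-_n], M_*)$ over the base field $F$. Theorem \ref{Operations on (K_1^M)^r that factorize through the multiplication map} applied over $F$ then yields unique $a_x, b_x \in M_*(F)$ with $\psi^{+}_x = a_x + [-_1,\ldots,-_n]\cdot b_x$. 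Evaluation at $\underline{a}=(1,\ldots,1)$ combined with $[1]=0$ forces $a_x=0$, and setting $\partial^{+}(\varphi)(x):=b_x$ produces the required identity. The case $\partial^{-}$ is handled by the same recipe starting from $\psi^{-}_x(\underline{a}):=-(\varphi(x-[\underline{a}])-\varphi(x))$, which again vanishes at $(1,\ldots,1)$.

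\textbf{Compatibility and filtration.} Naturality in $F$ and commutation with specialization follow by the uniqueness statement just proven: for a field extension $F\subset F'$, both $\partial^{+}(\varphi)(x)|_{F'}$ and $\partial^{+}(\varphi)(x|_{F'})$ satisfy the defining identity for $x|_{F'}$ over $F'$ and therefore agree; the analogous argument for specialization maps uses Proposition \ref{Properties of residue und specialization maps} to commute specializations past multiplication by pure symbols $[\underline{a}]$ with $a_i$ units. For the $[-n]$ filtration shift, if $\varphi$ takes values in $M_{\geq d}$ then so does $\psi^{+}_x$, and combining $[\underline{t}]\, b_x = \psi^{+}_x(\underline{t}) \in M_{\geq d}(F(t_1,\ldots,t_n))$ with the injectivity of $[\underline{t}]\cdot(-)$ from Proposition \ref{[t]x = 0} forces $b_x \in M_{\geq d-n}(F)$, exactly the required shift.

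\textbf{Main obstacle.} The only genuinely delicate point is verifying that $\psi^{+}_x$ really is an element of $\Op_{\sp}([-_1,\ldots,-_n], M_*)$ over $F$, and subsequently that $x \mapsto b_x$ is natural in $F$ and specialization-compatible; both ultimately reduce to careful tracing of specialization maps through the symbol $[\underline{a}]$, where the fact that the entries are units of the valuation ring is what allows the application of Proposition \ref{Properties of residue und specialization maps}. Once this bookkeeping is discharged, Theorem \ref{Operations on (K_1^M)^r that factorize through the multiplication map} does the structural heavy lifting, while Proposition \ref{[t]x = 0} supplies both uniqueness and the sharpness of the filtration shift.
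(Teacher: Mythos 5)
Your proposal is correct and follows essentially the same route as the paper: fix $x$, regard the effect of adding $\pm[\underline{a}]$ as an operation on $[-_1,\dotsc,-_n]$ defined over $F$, and apply Theorem \ref{Operations on (K_1^M)^r that factorize through the multiplication map} to extract the coefficient, with the filtration shift read off from the defining identity. The only (cosmetic) differences are that you subtract $\varphi(x)$ and normalize at $(1,\dotsc,1)$ where the paper evaluates $\psi(\varphi)_x$ at $0$, and you derive uniqueness and specialization-compatibility from Proposition \ref{[t]x = 0} rather than from the freeness statement, which if anything makes the well-definedness of $\partial^{\pm}(\varphi)$ as an element of $\Op_{\sp}$ more explicit than in the paper.
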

\begin{proof}
Let $\varphi \in \Op_{\sp}(K^{\MW}_n, M_*)$. Furthermore let $k \subset F$ be a field extension, let $x \in K^{\MW}_n(F)$ and let $\underline{a} \in (L^\times)^n$ for some field extension $F \subset L$. 

We set $\psi(\varphi)_x([\underline{a}]) =  \varphi(x \pm [\underline{a}])$, which yields a operations 
$$\psi(\varphi)_x \in \Op_{\sp}([-_1,\dotsc,-_n], M_*)$$
defined over $F$. Theorem \ref{Operations on (K_1^M)^r that factorize through the multiplication map} now gives $\psi(\varphi)_x = [-_1,\dotsc,-_n]a_x + b_x$ for some $a_x,b_x \in M_*(F)$. Since $0 \in [F^\times,\dotsc,F^\times]$ we have
$$\varphi(x) = \varphi(x \pm 0) = \psi(\varphi)_x(0) = 0 \cdot a_x + b_x = b_x.$$
Setting $\partial^{\pm}(\varphi)(x) = a_x$ therefore does the job and also clarifies that $\partial^{\pm}$ is unique with the wanted property. Furthermore, $\partial^{\pm}$ is by definition clearly a morphism of $M_*(k)$-modules. It remains to verify that $\partial^{\pm}$ respects the respective filtrations. If $\varphi \in \Op_{\sp}(K^{\MW}_n, F_dM_*)$ for some integer $d$, then for any element $x \in K^{\MW}_n(F)$ we have $\varphi(x \pm [\underline{a}]) = [\underline{a}]\partial^{\pm}(\varphi)(x) \in F_dM_*(L)$ for all $\underline{a} \in (L^\times)^n$. Hence $\partial^{\pm}(\varphi)(x)$ lives in $F_{d-n}M_*(F)$, which finishes the proof.
\end{proof}
We will usually denote $\partial^\pm(\varphi)$ by $\varphi^{(\pm)}$ and refer to them as positive and negative shifts of $\varphi$. Let us record some useful examples:
\begin{prop}\label{Shift is derivative}
For all $\mathbb{N}$-graded homotopy algebras $M_*$ and all $y \in \prescript{}{\delta_nh} M_*(k)$ we have
\begin{enumerate}
\item[(i)] ${\lambda_0^n \cdot 1}^{(\pm)} = 0$ and ${\lambda_1^n \cdot 1}^{(\pm)} = \lambda_0^n \cdot 1$;
\item[(ii)] ${\lambda_l^n \cdot y}^{(+)} = \lambda_{l-1}^n \cdot y$ for all integers $l \geq 2$;
\item[(iii)] ${\lambda_l^n \cdot y}^{(-)} = \sum_{i = 0}^{l-1}(-1)^{l-(i+1)}[-1]^{n(l-(i+1))}\lambda_i^n \cdot y$ for all integers $l \geq 2$;
\end{enumerate}
\end{prop}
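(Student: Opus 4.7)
The plan is to handle the three parts in order, with (i) being immediate from the definitions, (ii) following from Proposition \ref{lambda of sum} applied to a single pure symbol, and (iii) proceeding by induction on $l$ using (ii) together with a key identity in Milnor-Witt K-theory.

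For (i), the operation $\lambda^n_0 \cdot 1$ is the constant operation with value $1$, so $\lambda^n_0 \cdot 1(x \pm [\underline{a}]) = 1 = \lambda^n_0 \cdot 1(x) \pm [\underline{a}] \cdot 0$, forcing ${\lambda^n_0 \cdot 1}^{(\pm)} = 0$ by the uniqueness in Proposition \ref{Shifts}. Likewise $\lambda^n_1 \cdot 1 = \id$ yields ${\lambda^n_1 \cdot 1}^{(\pm)} = \lambda^n_0 \cdot 1$ directly from the defining equation. For (ii), I would apply Proposition \ref{lambda of sum} with $x' = [\underline{a}]$. Since Corollary \ref{lambda is divided power} gives $\lambda^n_j([\underline{a}]) = 0$ for $j \geq 2$, only the terms $i = l$ and $i = l-1$ survive, giving
\[ \lambda^n_l \cdot y(x + [\underline{a}]) = \lambda^n_l \cdot y(x) + \lambda^n_{l-1}(x) \cdot [\underline{a}] \cdot y. \]
To match the shape required by Proposition \ref{Shifts}, I must commute $[\underline{a}]$ past $\lambda^n_{l-1}(x)$. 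The $\epsilon$-graded commutativity produces a factor $\epsilon^{n^2(l-1)}$, which is $1$ when $n$ is even; when $n$ is odd, the hypothesis $y \in \prescript{}{h}M_*(k)$ together with $\epsilon = 1 - h$ gives $\epsilon y = y$, absorbing the factor. Uniqueness in Proposition \ref{Shifts} then identifies $\partial^+(\lambda^n_l \cdot y) = \lambda^n_{l-1} \cdot y$.

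For (iii), the key additional ingredient is the Milnor-Witt K-theory identity $[\underline{a}]^2 = [\underline{a}] \cdot [-1]^n$ in $K^{\MW}_{2n}$, which follows by iterating $[a,a] = [a,-1]$ across the coordinates of $\underline{a}$ and absorbing the resulting $\epsilon$-factors into $[-1]^n$ via $h[-1]^n = 0$. The recursion is generated by writing $x = (x - [\underline{a}]) + [\underline{a}]$ and applying (ii) just established:
\[ \lambda^n_l \cdot y(x) = \lambda^n_l \cdot y(x - [\underline{a}]) + [\underline{a}] \cdot \lambda^n_{l-1} \cdot y(x - [\underline{a}]). \]
Expanding $\lambda^n_{l-1} \cdot y(x - [\underline{a}]) = \lambda^n_{l-1} \cdot y(x) - [\underline{a}] \partial^-(\lambda^n_{l-1} \cdot y)(x)$ via the definition of $\partial^-$ (using the induction hypothesis for $l > 2$ or the $M_*(k)$-linear extension of (i) for $l = 2$), and replacing the resulting $[\underline{a}]^2$ by $[\underline{a}][-1]^n$, one obtains
\[ \lambda^n_l \cdot y(x - [\underline{a}]) = \lambda^n_l \cdot y(x) - [\underline{a}]\bigl(\lambda^n_{l-1} \cdot y(x) - [-1]^n \partial^-(\lambda^n_{l-1} \cdot y)(x)\bigr). \]
Uniqueness in Proposition \ref{Shifts} gives the recursion $\partial^-(\lambda^n_l \cdot y) = \lambda^n_{l-1} \cdot y - [-1]^n \partial^-(\lambda^n_{l-1} \cdot y)$, which unfolds by a routine induction to the stated closed form, matching $(-1)^{l-(i+1)}$ because the index shift $i \mapsto i$ is preserved.

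The main obstacle is the commutativity bookkeeping: verifying $[\underline{a}]^2 = [\underline{a}][-1]^n$ cleanly and ensuring that every $\epsilon$-factor produced by $\epsilon$-graded commutativity is absorbed either by the parity of $n$ (for even $n$) or by the $h$-torsion of $y$ and the relation $h[-1]^n = 0$ (for odd $n$). Once this piece of algebra is in place, parts (ii) and (iii) reduce to a straightforward application of Proposition \ref{lambda of sum} and induction.
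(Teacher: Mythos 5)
Your proposal is correct. Parts (i) and (ii) are exactly the paper's argument: the paper also reads off the shifts from Proposition \ref{lambda of sum} with $x'=[\underline{a}]$, working in the ``commutative setting'' that the hypothesis $y\in\prescript{}{\delta_n h}M_*(k)$ provides (your bookkeeping with $\epsilon = 1-h$ and $\epsilon y = y$ is the same absorption the paper performs once and for all in Proposition \ref{Definition of Lambda}). For (iii) you take a mildly different route: the paper applies Proposition \ref{lambda of sum} directly to $x + (-[\underline{a}])$, using that $\lambda^n_j(-[\underline{a}])\cdot y = (-1)^j[\underline{a}][-1]^{n(j-1)}\cdot y$ for all $j\geq 1$ (the coefficients of $(1+[\underline{a}]t)^{-1}$, which rest on $[\underline{a}]^j=[\underline{a}][-1]^{n(j-1)}$), and obtains the closed form in one step; you instead decompose $x=(x-[\underline{a}])+[\underline{a}]$, use (ii), the single identity $[\underline{a}]^2=[\underline{a}][-1]^n$ (correctly justified via $h[-1]=0$ to kill the stray $\epsilon^{n(n-1)/2}$), and uniqueness of the shift to get the recursion $\partial^-(\lambda^n_l\cdot y)=\lambda^n_{l-1}\cdot y-[-1]^n\partial^-(\lambda^n_{l-1}\cdot y)$, which indeed unwinds to the stated formula. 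Your version needs only the $j=2$ case of the power identity but pays with an induction and an appeal to Proposition \ref{Shifts}; the paper's is shorter but presupposes the full expansion of the inverse series. Both are sound, and the only point worth making explicit in your write-up is that $\partial^-(\lambda^n_1\cdot y)=\lambda^n_0\cdot y$ (the base of your recursion) should be checked by the same one-line computation as (i) rather than cited as a formal consequence of the $y=1$ case.
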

\begin{proof}
This is a direct consequence of Proposition \ref{lambda of sum}. For the convenience of the reader we will nevertheless quickly do the computations. Let $k \subset F$ be a field extension, let $x \in K^{\MW}_n(F)$ and let $\underline{a} \in (F^\times)^n$. We have
$$\lambda_0^n \cdot 1(x \pm [\underline{a}]) = 1 = \lambda_0^n \cdot 1(x) \text{ and }\lambda_1^n \cdot 1(x \pm [\underline{a}]) = (x \pm [\underline{a}]) \cdot 1 = \lambda_1^n \cdot 1(x) \pm [\underline{a}] \cdot 1,$$
which shows (i). Now let $l \geq 2$. The computation
$$\lambda_l^n \cdot y(x + [\underline{a}]) = \sum_{i + j = l}\lambda_i^n(x) \lambda_j^n([\underline{a}]) \cdot y = (\lambda_l^n(x) + \lambda^n_{l-1}([\underline{a}])) \cdot y = \lambda^n_l \cdot y(x) + [\underline{a}]\lambda^n_{l-1} \cdot y(x)$$
shows claim (ii) and the computation
$$\lambda_l^n \cdot y(x - [\underline{a}]) = \sum_{i + j = l}\lambda_i^n(x) \lambda_j^n(-[\underline{a}]) \cdot y =   \lambda^n_l \cdot y(x) + [\underline{a}]\sum_{i = 0}^{l-1}(-1)^{l-i}[-1]^{n(l-i-1)}\lambda_i^n \cdot y(x)$$ shows (iii) by rewriting the second summand as
$$[\underline{a}]\sum_{i = 0}^{l-1}(-1)^{l-i}[-1]^{n(l-i-1)}\lambda_i^n \cdot y(x) = -[a]\sum_{i = 0}^{l-1}(-1)^{l-(i+1)}[-1]^{n(l-(i+1))}\lambda_i^n \cdot y(x).$$
\end{proof}
It is natural to ask in which relation the two shifts are, especially when applying both of them on the same operation. For now we get the following, which we will improve later on.
\begin{lem}\label{Properties of derivatives}
Let $\varphi \in \Op_{\sp}(K^{\MW}_n, M_*)$ for some $\mathbb{N}$-graded homotopy algebra $M_*$. Then we have
\begin{enumerate}
\item[(i)] $(\varphi^{(+)})^{(-)} = \epsilon^n(\varphi^{(-)})^{(+)}$.
\item[(ii)] $(\varphi^{(+)})^{(+)} \in \Op_{\sp}(K^{\MW}_n, \prescript{}{\delta_nh} M_*)$;
\item[(iii)] $\varphi^{(+)} - \varphi^{(-)}= [-1]^n(\varphi^{(+)})^{(-)}$;
\end{enumerate}
\end{lem}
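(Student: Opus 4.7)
The plan is to prove all three statements by computing $\varphi$ on expressions of the form $x \pm [\underline{a}] \pm [\underline{b}]$ in two different orders and comparing the results, then cancelling common $[\underline{a}]$-type factors via Proposition \ref{[t]x = 0}. Throughout, we will use $\epsilon$-graded commutativity of $K^{\MW}_*$ freely.

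For part (i), I would compute $\varphi(x + [\underline{a}] - [\underline{b}])$ in two ways. Expanding $(x - [\underline{b}]) + [\underline{a}]$ first with $\partial^+$ and then with $\partial^-$ gives
$$\varphi(x) - [\underline{b}]\varphi^{(-)}(x) + [\underline{a}]\varphi^{(+)}(x) - [\underline{a}][\underline{b}](\varphi^{(+)})^{(-)}(x),$$
while expanding $(x+[\underline{a}]) - [\underline{b}]$ in the other order yields
$$\varphi(x) + [\underline{a}]\varphi^{(+)}(x) - [\underline{b}]\varphi^{(-)}(x) - [\underline{b}][\underline{a}](\varphi^{(-)})^{(+)}(x).$$
Equating and using $[\underline{b}][\underline{a}] = \epsilon^{n^2}[\underline{a}][\underline{b}] = \epsilon^n[\underline{a}][\underline{b}]$ (as $n$ and $n^2$ have equal parity), we arrive at $[\underline{a}][\underline{b}]\bigl((\varphi^{(+)})^{(-)}(x) - \epsilon^n(\varphi^{(-)})^{(+)}(x)\bigr) = 0$ for all $\underline{a}, \underline{b}$ over arbitrary field extensions; two iterated applications of Proposition \ref{[t]x = 0} (taking the $a_i$ and $b_j$ to be transcendentals and applying the corresponding residue maps) remove these symbols and yield (i).

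For part (ii), the analogous trick is to expand $\varphi(x + [\underline{a}] + [\underline{b}])$ in the two possible orders. This produces
$$[\underline{b}][\underline{a}](\varphi^{(+)})^{(+)}(x) = [\underline{a}][\underline{b}](\varphi^{(+)})^{(+)}(x),$$
so $(1-\epsilon^n)[\underline{a}][\underline{b}](\varphi^{(+)})^{(+)}(x) = 0$. When $n$ is even this is empty, and when $n$ is odd we have $1-\epsilon = 1+\langle-1\rangle = h$, giving $h\cdot [\underline{a}][\underline{b}](\varphi^{(+)})^{(+)}(x) = 0$. Cancelling the symbols as before yields $h\cdot(\varphi^{(+)})^{(+)}(x) = 0$, i.e.\ $(\varphi^{(+)})^{(+)} \in \Op_{\sp}(K^{\MW}_n, \prescript{}{h}M_*)$.

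For part (iii), I would expand $\varphi(x) = \varphi((x+[\underline{a}])-[\underline{a}])$ using first $\partial^+$ and then $\partial^-$. This gives, after cancelling $\varphi(x)$,
$$[\underline{a}]\bigl(\varphi^{(+)}(x)-\varphi^{(-)}(x)\bigr) = [\underline{a}][\underline{a}](\varphi^{(-)})^{(+)}(x).$$
The main subsidiary calculation is the identity $[\underline{a}][\underline{a}] = [\underline{a}][-1]^n$ in $K^{\MW}_*$, which I would establish by induction on $n$: the case $n=1$ is Lemma \ref{List of relations of Milnor-Witt K-theory}(iv), and the step uses $[a_1][\underline{a}'][a_1][\underline{a}'] = \epsilon^{n-1}[a_1][a_1]\,[\underline{a}'][\underline{a}']$ together with the induction hypothesis and $[-1][\underline{a}'] = \epsilon^{n-1}[\underline{a}'][-1]$, the factors of $\epsilon^{n-1}$ combining to $\epsilon^{2(n-1)} = 1$. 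Next I would use part (i) to replace $(\varphi^{(-)})^{(+)}$ by $\epsilon^n(\varphi^{(+)})^{(-)}$, and the identity $\epsilon[-1] = [-1]$ (which follows from $\langle-1\rangle[-1] = [1]-[-1] = -[-1]$) to deduce $\epsilon^n[-1]^n = [-1]^n$. Cancelling $[\underline{a}]$ via Proposition \ref{[t]x = 0} finishes the proof.

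The main obstacle I anticipate is the bookkeeping of the signs $\epsilon^n$ coming from graded commutativity, together with the verification that the $\epsilon$'s and $[-1]$'s combine correctly in part (iii); the identity $[\underline{a}][\underline{a}] = [\underline{a}][-1]^n$ and the absorption $\epsilon[-1] = [-1]$ are the key computational inputs. Everything else is a careful double expansion followed by cancellation using Proposition \ref{[t]x = 0}.
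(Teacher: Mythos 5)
Your proof is correct and follows essentially the same route as the paper: double expansions of $\varphi(x\pm[\underline{a}]\pm[\underline{b}])$ in both orders, comparison via $\epsilon$-graded commutativity, and cancellation of symbols through Proposition \ref{[t]x = 0}, using $[\underline{a},\underline{a}]=[\underline{a}][-1]^n$ for (iii). The only (harmless) deviations are that in (iii) you land on $(\varphi^{(-)})^{(+)}$ and convert via part (i) and $\epsilon[-1]=[-1]$, whereas the paper sets $\underline{a}=\underline{b}$ in the expansion ending with $\partial^-$ and obtains $(\varphi^{(+)})^{(-)}$ directly, and you additionally supply an induction proof of $[\underline{a},\underline{a}]=[\underline{a}][-1]^n$, which the paper uses without proof.
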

\begin{proof}
Let $\varphi \in \Op_{\sp}(K^{\MW}_n, M_*)$, let $k \subset F$ be a field extension, $x \in K_n^{\MW}(F)$ and $\underline{a},\underline{b} \in (F^\times)^n$. We can compute $\varphi(x + [\underline{a}] - [\underline{b}])$ in two ways. We get
$$\varphi(x + [\underline{a}] - [\underline{b}]) = \varphi(x) + [\underline{a}]\varphi^{(+)}(x) - [\underline{b}](\varphi^{(-)}(x) + [\underline{a}](\varphi^{(-)})^{(+)}(x))$$
by first applying the defining formula of $\partial^{-1}$ and then the one of $\partial^{+1}$ and
$$\varphi(x + [\underline{a}] - [\underline{b}]) = \varphi(x) - [\underline{b}]\varphi^{(-)}(x) + [\underline{a}](\varphi^{(+)}(x) - [\underline{b}](\varphi^{(+)})^{(-)}(x))$$
if we use the other order. Hence we have
$$[\underline{a},\underline{b}](\varphi^{(+)})^{(-)}(x) - [\underline{b},\underline{a}]\varphi^{(-)})^{(+)}(x) = 0.$$ Using $[\underline{b},\underline{a}] = \epsilon^{n^2}[\underline{a},\underline{b}] = \epsilon^n[\underline{a},\underline{b}]$ and choosing $F = k(\underline{t},\underline{s})$ and $\underline{a} = \underline{t}$ and $\underline{b} = \underline{s}$ for some transcendental elements $\underline{t}$ and $\underline{s}$ over $k$, we thus have $(\varphi^{(+)})^{(-)} = \epsilon^n(\varphi^{(-)})^{(+)}$ by Proposition \ref{[t]x = 0}, which shows (i). Similarly one gets
$$[\underline{a},\underline{b}]\delta_n h(\varphi^{(+)})^{(+)}(x) = 0,$$
which then by Proposition \ref{[t]x = 0} yields that $(\varphi^{(+)})^{(+)} \in \Op_{\sp}(K^{\MW}_n,\prescript{}{\delta_nh} M_* )$ as claimed in (ii). Finally, let us show (iii). Setting $\underline{a} = \underline{b}$, we get
$$\varphi(x) = \varphi(x) - [\underline{a}]\varphi^{(-)}(x) + [\underline{a}](\varphi^{(+)}(x) - [\underline{a}](\varphi^{(+)})^{(-)}(x))$$
from the second formula above and therefore 
$$[\underline{a}]([-1]^n(\varphi^{(+)})^{(-)}(x) - (\varphi^{(+)} - \varphi^{(-)})) = 0$$
by using that $[\underline{a},\underline{a}] = [\underline{a}][-1]^n$. Choosing $F = k(t_1,\dotsc,t_n)$ and $a_i = t_i$ for some transcendental elements $t_i$ over $k$ therefore once again completes the argument by Proposition \ref{[t]x = 0}.
\end{proof}
We will also consider quotients of $\Op_{\sp}(K^{\MW}_n, M_*)$. For typographical reasons, we will occasionally also denote operations from $K^{\MW}_n$ to some $\mathbb{N}$-graded homotopy algebra $M_*$ by $\Op^n(M_*)$ and the ones respecting specialization maps by $\Op^n_{\sp}(M_*)$.
\begin{prop}\label{ker of shifts}
For all $\mathbb{N}$-graded homotopy algebras $M_*$ and for all non-negative integers $d$, the morphisms $\partial^{\pm}$ induce morphisms
$$\Op^n_{\sp}(M_*)/\!\Op^n_{\sp}(F_{d+n}M_*) \xrightarrow{\overline{\partial^{\pm}}} \Op^n_{\sp}(M_*)/\!\Op^n_{\sp}(F_dM_*)$$
of $M_*(k)/F_dM_*(k)$-modules whose kernels are $M_*(k)/F_{d+n}M_*(k)$. In particular, the kernels of $\partial^{\pm}$ are the submodule $M_*(k)$ of constant operations.
\end{prop}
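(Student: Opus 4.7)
The plan is to combine the filtered structure of $\partial^{\pm}$ from Proposition \ref{Shifts} with the generation of $K^{\MW}_n$ by pure symbols (Lemma \ref{Generators of positive Milnor-Witt K-theory groups}) and the sheaf-theoretic interpretation of Proposition \ref{Morphisms between Unramified Sheaves}. First I would verify the descent and linearity: since $\partial^{\pm}$ is filtered of shift $-n$, it sends $\Op^n_{\sp}(F_{d+n}M_*)$ into $\Op^n_{\sp}(F_dM_*)$ and therefore descends to the claimed maps $\overline{\partial^{\pm}}$. Because $M_*$ is $\mathbb{N}$-graded, $F_\bullet M_*$ is an ideal filtration, so $F_dM_*(k)$ acts by zero on the target quotient and $\overline{\partial^{\pm}}$ is linear over $M_*(k)/F_dM_*(k)$ (the source being viewed as such a module through the surjection $M_*(k)/F_{d+n}M_*(k) \twoheadrightarrow M_*(k)/F_dM_*(k)$). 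The inclusion $M_*(k)/F_{d+n}M_*(k) \subseteq \ker(\overline{\partial^{\pm}})$ is immediate: a constant operation $c \in M_*(k)$ satisfies $c^{(\pm)} = 0$ by the defining identity, and such a $c$ vanishes in the source quotient iff $c \in F_{d+n}M_*(k)$.

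For the reverse inclusion---the main step---I would take $\varphi \in \Op^n_{\sp}(M_*)$ with $\varphi^{(\pm)} \in \Op^n_{\sp}(F_dM_*)$. The identity $\varphi(x \pm [\underline{a}]) = \varphi(x) \pm [\underline{a}]\varphi^{(\pm)}(x)$ together with $[\underline{a}]\varphi^{(\pm)}(x) \in F_{d+n}M_*(F)$ shows that adding or subtracting a pure symbol alters $\varphi(x)$ only modulo $F_{d+n}M_*$; for the opposite sign one rewrites $\varphi(y-[\underline{a}]) = \varphi(y) - [\underline{a}]\varphi^{(+)}(y-[\underline{a}])$ (and analogously when only $\varphi^{(-)}$ is assumed small). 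Since $K^{\MW}_n(F)$ is generated by pure symbols for $n \geq 1$ by Lemma \ref{Generators of positive Milnor-Witt K-theory groups}, iteration yields
$$\varphi_F(x) \equiv \varphi_F(0) \pmod{F_{d+n}M_*(F)}$$
for every field extension $k \subset F$ and every $x \in K^{\MW}_n(F)$. Setting $c := \varphi_k(0) \in M_*(k)$ and using Proposition \ref{Morphisms between Unramified Sheaves} to view $\varphi$ as a morphism of unramified sheaves---which in particular is compatible with base change along $k \hookrightarrow F$---one obtains $\varphi_F(0) = \iota_F(c)$. Hence $\varphi - c \in \Op^n_{\sp}(F_{d+n}M_*)$, so the class of $\varphi$ coincides with that of the constant operation $c$, establishing $\ker(\overline{\partial^{\pm}}) \subseteq M_*(k)/F_{d+n}M_*(k)$.

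The \emph{in particular} statement then follows by passing to the intersection over $d$. If $\varphi \in \ker(\partial^{\pm})$, applying the previous step for each $d \geq 0$ produces $c_d \in M_*(k)$ with $\varphi - c_d \in \Op^n_{\sp}(F_{d+n}M_*)$; evaluating at $0 \in K^{\MW}_n(k)$ forces $c_d \equiv \varphi_k(0) \pmod{F_{d+n}M_*(k)}$. Setting $c := \varphi_k(0)$ one concludes $\varphi - c \in \bigcap_{d \geq 0} \Op^n_{\sp}(F_{d+n}M_*) = 0$ by separatedness of $F_\bullet M_*$ (which holds because $M_*$ is $\mathbb{N}$-graded), so $\varphi = c$ is constant. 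The main obstacle I anticipate lies in the uniformity step at the end of the key inclusion: ensuring that the pointwise constant value $\varphi_F(0)$ genuinely arises from a single element of $M_*(k)$ independent of $F$. Commuting with specialization maps does not literally give this, and one must invoke Proposition \ref{Morphisms between Unramified Sheaves} to promote the operation to a morphism of sheaves and thereby obtain compatibility with arbitrary base changes $k \hookrightarrow F$.
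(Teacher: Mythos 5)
Your proposal is correct and takes essentially the same route as the paper: the defining identity of $\partial^{\pm}$ plus the fact that every element of $K^{\MW}_n(F)$ is a sum and difference of pure symbols gives $\varphi(x)\equiv\varphi(0) \bmod F_{d+n}M_*(F)$, and the ``in particular'' part follows by letting $d$ vary and using separatedness of the filtration. Two minor remarks: your closing worry is moot, since elements of $\Op_{\sp}$ are by definition natural in the field extension, so $\varphi_F(0)$ is automatically the image of $\varphi_k(0)$ and the appeal to Proposition \ref{Morphisms between Unramified Sheaves} is unnecessary (though harmless); and your explicit treatment of subtracting a symbol when only $\varphi^{(+)}$ is assumed small, via $\varphi(y-[\underline{a}])=\varphi(y)-[\underline{a}]\varphi^{(+)}(y-[\underline{a}])$, is a detail the paper's $\pm$ shorthand glosses over.
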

\begin{proof}
If $\varphi, \psi \in \Op^n_{\sp}(M_*)$ are two operations whose images $\overline{\varphi}$ and $\overline{\psi}$ in the quotient $\Op^n_{\sp}(M_*)/\!\Op^n_{\sp}(F_{d+n}M_*)$ coincide, then we also have $\overline{\varphi^{(\pm)}} = \overline{\psi^{(\pm)}}$ in $\Op^n_{\sp}(M_*)/\!\Op^n_{\sp}(F_{d}M_*)$ since $\partial^{\pm}$ maps $\Op^n_{\sp}(F_{d+n}M_*)$ to $\Op^n_{\sp}(F_dM_*)$ by Proposition \ref{Shifts}. Therefore $\overline{\partial^{\pm}}$ is a well-defined.
\\Let $\varphi \in \Op^n_{\sp}(M_*)$ with $\varphi^{(\pm)} \in \Op^n_{\sp}(F_dM_*)$. Furthermore let $k \subset F$ be a field extension, let $\underline{a} \in (F^\times)^n$ and let $x \in K^{\MW}_n(F)$. We have
$$\varphi(x \pm [\underline{a}]) = \varphi(x) \pm [\underline{a}]\varphi^{(\pm)}(x) = \varphi(x) \text{ mod } F_{d+n}M_*(F)$$
and know that every element $x \in K^{\MW}_n(F)$ can be written as a sum and difference of elements in $[F^\times,\dotsc, F^\times]$. Therefore we get
$$\varphi(x) = \varphi(0) \text{ mod } F_{d+n}M_*(F)$$ 
by repeating the previous computation, so that $\varphi \in M_*(k)/F_{d+n}M_*(k)$. Since such elements certainly are in the kernel of $\overline{\partial^\pm}$, this shows the first claim.
\\Now let $\varphi$ be in the kernel of $\partial^{\pm}$. Furthermore let $k \subset F$ be a field extension and let $x \in K^{\MW}_n(F)$. Then we have $\varphi(x) - \varphi(0) \in F_{d+n}M_*(F)$ for all non-negative integers $d$, so that $\varphi(x) = \varphi(0)$ by the fact that the intersection $\bigcap_{d \geq 0} F_{d+n}M_*(F)$ is trivial. Since elements of $M_*(k)$ clearly are in the kernel of $\partial^\pm$, we are done.
\end{proof}
\section{Computing the Operations}\label{Section 8}
Using the previously defined shifts we finally start with computing operations. We will first deal with quotients with respect to the filtration  and then lift these computations using the separatedness of the filtration. As in the last section, we let $n$ be a positive integer.
\begin{prop}\label{"locally" gen by div powers}
For all $\mathbb{N}$-graded homotopy algebras $M_*$ and all non-negative integers $d$, the $M_*(k)/F_dM_*(k)$-module $\Op^n_{\sp}(M_*)/\!\Op^n_{\sp}(F_dM_*)$ is generated by resi\-due classes of the $\lambda_i \cdot a$ for $ni < d$ with $a \in \prescript{}{\delta_nh} M_*(k)$ if $i \geq 2$.
\end{prop}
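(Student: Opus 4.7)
The plan is strong induction on $d \geq 0$, using the shift operators $\partial^{\pm}$ from Proposition~\ref{Shifts} and their interaction with divided powers recorded in Proposition~\ref{Shift is derivative} and Lemma~\ref{Properties of derivatives}.

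For the base case $d \leq n$ the only potential generator is $\lambda_0^n \cdot a = a$, so the claim amounts to saying that every class in $\Op^n_{\sp}(M_*)/\!\Op^n_{\sp}(F_d M_*)$ is represented by a constant. Since $F_0 M_* = M_*$ for an $\mathbb{N}$-graded homotopy algebra, Proposition~\ref{ker of shifts} applied with $0$ in place of $d$ identifies $\Op^n_{\sp}(M_*)/\!\Op^n_{\sp}(F_n M_*)$ with its own kernel $M_*(k)/F_n M_*(k)$; the cases $d < n$ follow by passing to the further quotient.

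For the inductive step $d > n$, I would take $\varphi \in \Op^n_{\sp}(M_*)$ and apply the induction hypothesis to $\partial^+(\varphi) \in \Op^n_{\sp}(M_*)$ at level $d - n$ to get
$$\partial^+(\varphi) \equiv \sum_{ni < d - n} \lambda_i^n \cdot a_i \mod \Op^n_{\sp}(F_{d-n}M_*)$$
with coefficients in the prescribed torsion modules. The natural candidate for a primitive is $\psi = \sum_i \lambda_{i+1}^n \cdot a_i$: Proposition~\ref{Shift is derivative} then gives $\partial^+(\psi) \equiv \partial^+(\varphi)$ modulo $F_{d-n}$, whereupon Proposition~\ref{ker of shifts} identifies $\varphi - \psi$ with a constant $a \in M_*(k)/F_d M_*(k)$, yielding the desired representation $\varphi \equiv a + \psi \mod F_d$.

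The main obstacle is that forming $\lambda_2^n \cdot a_1$ requires $a_1 \in \prescript{}{\delta_n h} M_*(k)$, whereas the induction hypothesis only places $a_1$ in $M_*(k)$. I would resolve this by invoking Lemma~\ref{Properties of derivatives}~(ii): the iterated shift $(\partial^+)^2(\varphi)$ automatically takes values in $\prescript{}{\delta_n h} M_*$. Applying $\partial^+$ once more to the displayed expression and evaluating at $0 \in K_n^{\MW}(k)$ yields $a_1 \equiv (\partial^+)^2(\varphi)(0) \mod F_{d-2n} M_*(k)$, and the right-hand side is $\delta_n h$-torsion. Writing $a_1 = c + r$ with $c \in \prescript{}{\delta_n h} M_*(k)$ and $r \in F_{d-2n}M_*(k)$, the correction $\lambda_1^n \cdot r$ has degree at least $d - n$ and therefore vanishes in $\Op^n_{\sp}(M_*)/\!\Op^n_{\sp}(F_{d-n}M_*)$, so we may legitimately replace $a_1$ by $c$ before forming $\psi$. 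The range $d \leq 2n$ is unaffected, since then no index $i = 1$ appears in the expression for $\partial^+(\varphi)$ to begin with.
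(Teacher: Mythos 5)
Your proof is correct and follows essentially the same route as the paper: induction on $d$, applying the induction hypothesis to $\varphi^{(+)}$, lifting via Proposition \ref{Shift is derivative}, and identifying the remaining difference as a constant through Proposition \ref{ker of shifts}, with Lemma \ref{Properties of derivatives}~(ii) supplying the torsion condition on the coefficient of $\lambda_2^n$. Your only deviations are the explicit base case $d \leq n$ and the decomposition $a_1 = c + r$ with $r \in F_{d-2n}M_*(k)$ absorbed into the filtration, which is in fact a slightly more careful treatment of the point the paper handles by asserting $a_1 \in \prescript{}{\delta_n h}M_*(k)$ directly.
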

\begin{proof}
We give a proof by induction on $d \geq 0$. For $d = 0$ we have $F_dM_* = M_*$, so that the quotient $\Op^n_{\sp}(M_*)/\!\Op^n_{\sp}(F_0M_*)$ is the trivial module over the zero ring. This is certainly generated by the empty set.
\\Suppose that the statement is true for non-negative integers up to some integer $d$ and let $\varphi \in \Op^n_{\sp}(M_*)$. We denote the image of $\varphi$ under the quotient map $\Op^n_{\sp}(M_*) \rightarrow \Op^n_{\sp}(M_*)/\!\Op^n_{\sp}(F_{d+1}M_*)$ by $\overline{\varphi}$. Its positive shift $\overline{\varphi}^{(+)}$ lies in $\Op^n_{\sp}(M_*)/\!\Op^n_{\sp}(F_{d+1-n}M_*)$ and can hence by the induction hypothesis be written as $\overline{\varphi}^{(+)} = \sum_{0 \leq i \leq \frac{d+1-n}{n}}  \overline{\lambda_i \cdot a_i}$ for some $a_i \in M_*(k)$, which for $i \geq 2$ lie in $\prescript{}{\delta_nh} M_*(k)$. We now consider the operation 
$$\psi = \varphi - \sum_{0 \leq i \leq \frac{d+1-n}{n}} \lambda_{i+1} \cdot a_i \in \Op^n_{\sp}(M_*,)$$
 which is well-defined since by Lemma \ref{Properties of derivatives} we have $a_1 \in \prescript{}{\delta_nh} M_*(k)$. Indeed,
$$\overline{a_1} = ((\overline{\varphi})^{(+)})^{(+)}- \Big(\Big(\sum_{2 \leq i \leq \frac{d+1-n}{n}} \lambda_{i+1} \cdot a_i \Big)^{(+)} \Big)^{(+)},$$
so that $a_1$ is the difference of two elements in the kernel of $\prescript{}{\delta_nh} M_*(k)$  and thus itself lives there. By the definition of $\psi$ and Proposition \ref{Shift is derivative} we have $\overline{\psi}^{(+)} = 0$, which yields $\overline{\psi} = \overline{a_{-1}}$ for some element $a_{-1} \in M_*(k)$ according to Proposition \ref{ker of shifts}. Here $\overline{a_{-1}}$ denotes the residue class of $a_{-1}$ modulo $F_dM_*(k)$ considered as a constant operation. Thus $\overline{\varphi} = \sum_{-1 \leq i \leq \frac{d+1-n}{n}}\overline{\lambda_{i+1} \cdot a_i}$ as wanted.
\end{proof}
Since higher divided power operations are not necessarily well-defined by themselves but make use of elements of $\prescript{}{\delta_nh} M_*(k)$, we will need that higher shifts come with such elements. 
\begin{cor}\label{Derivatives invariant of order}
For all $\mathbb{N}$-graded homotopy algebras $M_*$ and all non-negative integers $d$, we have 
\begin{enumerate}
\item[(i)] $(\overline{\varphi}^{(+)})^{(-)} = (\overline{\varphi}^{(-)})^{(+)}$ for all $\overline{\varphi} \in\Op^n_{\sp}(M_*)/\!\Op^n_{\sp}(F_dM_*)$ and all odd $n$. In particular, $(\varphi^{(+)})^{(-)} = (\varphi^{(-)})^{(+)}$ holds for all operations $\varphi \in\Op^n_{\sp}(M_*)$ independent of the parity of $n$;
\item[(ii)] $\varphi^{(\pm)} \in \Op_{\sp}(K_n^{\MW},\prescript{}{\delta_nh} M_*(k))$ for all $\varphi \in \Op_{\sp}(K_n^{\MW},\prescript{}{\delta_nh} M_*(k))$;
\end{enumerate}
\begin{proof}
In light of Lemma \ref{Properties of derivatives}, both statements are only of interest to us in the case that $n$ is odd. The first part of (i) follows directly from the previous statement together with Proposition \ref{Shift is derivative}. Now for the second part, let  $\varphi \in\Op^n_{\sp}(M_*)$. Using the first part, the difference $(\varphi^{(+)})^{(-)} - (\varphi^{(-)})^{(+)}$ defines an element of $\Op^n_{\sp}(F_dM_*)$ for every non-negative integer $d$ and hence lies in the intersection $\bigcap_{d \geq 0}\Op^n_{\sp}(F_dM_*) = 0$.

For (ii) we also only need to consider odd $n$. Let $\varphi \in \Op_{\sp}(K_n^{\MW},\prescript{}{h} M_*)$, let $k \subset F$ be a field extension, let $x \in K^{\MW}_n(F)$ and let $\underline{a} \in (F^\times)^n$. The operation $\varphi^{(\pm)}$ is defined via the equation 
$$\varphi(x \pm [\underline{a}]) = \varphi(x) \pm [\underline{a}]\varphi^{(\pm)}(x),$$
which gives us
$$\pm [\underline{a}]\varphi^{(\pm)}(x) = \varphi(x \pm [\underline{a}]) - \varphi(x) \in \prescript{}{h} M_*(F).$$
Hence we have
$$[\underline{a}](\pm h)\varphi^{(\pm)}(x) = h(\pm [\underline{a}]) \varphi^{(\pm)}(x) = 0,$$ 
which as seen so often yields $h\varphi^{(\pm)}(x) = 0$ by Lemma \ref{[t]x = 0}. In other words we have $\varphi^{(\pm)} \in \Op_{\sp}(K_n^{\MW},\prescript{}{h} M_*)$.
\end{proof}
\end{cor}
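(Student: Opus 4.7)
My plan is to treat (i) by first reducing to the quotient computations of Proposition \ref{"locally" gen by div powers} and then lifting via separatedness, and to treat (ii) by a direct torsion chase through the defining relation of $\partial^{\pm}$. Throughout, the case of even $n$ in (i) is immediate from Lemma \ref{Properties of derivatives}(i) since $\epsilon^n = 1$, so all substantive work happens for odd $n$.

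For the first part of (i), with $n$ odd, I would work modulo $\Op^n_{\sp}(F_d M_*)$. Proposition \ref{"locally" gen by div powers} expresses any class $\overline{\varphi}$ as an $M_*(k)/F_dM_*(k)$-linear combination of generators $\overline{\lambda_l^n \cdot a_l}$, with $a_l \in \prescript{}{h} M_*(k)$ whenever $l \geq 2$. Since both shifts are $M_*(k)$-linear, the identity $(\overline{\varphi}^{(+)})^{(-)} = (\overline{\varphi}^{(-)})^{(+)}$ reduces to checking it on each generator $\lambda_l^n \cdot a$, which is a direct application of Proposition \ref{Shift is derivative}: for $l = 0$ and $l = 1$ the second shift vanishes outright, and for $l \geq 2$ both orderings expand into the same $M_*(k)$-linear combination of lower $\lambda_j^n \cdot a$ with $j \leq l-2$, the $[-1]^n$-coefficients matching after reindexing. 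The unconditional second part of (i) then follows by separatedness: the difference $(\varphi^{(+)})^{(-)} - (\varphi^{(-)})^{(+)}$ lies in $\Op^n_{\sp}(F_d M_*)$ for every $d$, hence in $\bigcap_{d \geq 0} \Op^n_{\sp}(F_d M_*) = 0$, since the filtration $F_dM_* = M_{\geq d}$ on an $\mathbb{N}$-graded homotopy algebra is separated.

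For (ii), I would unwind the defining relation $\varphi(x \pm [\underline{a}]) = \varphi(x) \pm [\underline{a}] \varphi^{(\pm)}(x)$ from Proposition \ref{Shifts}. If $\varphi$ takes values in $\prescript{}{\delta_n h} M_*$, the left-hand side and the first summand on the right are annihilated by $\delta_n h$, hence so is $\pm [\underline{a}] \varphi^{(\pm)}(x)$. Since $h \in K_0^{\MW}$ is central, this rearranges to $[\underline{a}] \cdot (\delta_n h)\varphi^{(\pm)}(x) = 0$ for every $\underline{a} \in (F^\times)^n$ and every field extension $k \subset F$. Specializing $\underline{a}$ to an $n$-tuple of independent transcendentals over $F$ and stripping off the symbols via $n$ applications of Proposition \ref{[t]x = 0} then yields $(\delta_n h) \varphi^{(\pm)}(x) = 0$, i.e., $\varphi^{(\pm)}(x) \in \prescript{}{\delta_n h} M_*(F)$. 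The main obstacle throughout is really just the coefficient bookkeeping for the divided-power identities in (i); once that is matched, the separatedness argument and the torsion chase for (ii) are purely formal.
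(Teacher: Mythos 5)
Your proposal is correct and follows essentially the same route as the paper: part (i) via the generation of $\Op^n_{\sp}(M_*)/\!\Op^n_{\sp}(F_dM_*)$ by the $\lambda^n_l \cdot a$ together with the shift formulas of Proposition \ref{Shift is derivative}, then lifting by separatedness of the filtration, and part (ii) by the same torsion chase through the defining equation of $\partial^{\pm}$ and Proposition \ref{[t]x = 0}. The only difference is that you spell out the coefficient check on the generators, which the paper leaves implicit.
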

In particular, we may apply the two shifts independently of their order and can define $\varphi^{(+m,-n)}$ as the operation $\varphi$ shifted $m$ times with respect to $\partial^{+}$ and $n$ times with respect to $\partial^{-}$. 

As mentioned before, the operations $\lambda^n_l$ turn out to essentially generate all operations. To be able to make the word ``essentially" precise, we introduce the following operations:
$$\sigma_l^n = \sum^l_{j = \lfloor \frac{l}{2} \rfloor +1} \binom{\lfloor \frac{l-1}{2} \rfloor }{j-\lfloor \frac{l}{2} \rfloor -1}[-1]^{n(l-j)}\lambda_j^n = \sum^{\lfloor \frac{l-1}{2} \rfloor}_{j = 0} \binom{\lfloor \frac{l-1}{2} \rfloor }{j}[-1]^{n(l-j)}\lambda_{l-j}^n$$
for all integers $l \geq 1$ and we additionally set $\sigma_0^n = \lambda_0^n$.
Once again we need to know the shifts of these operations:
\begin{prop}\label{Derivatives of new Lambdas}
Let $M_*$ be an $\mathbb{N}$-graded homotopy algebras and let $y \in \prescript{}{\delta_nh} M_*(k).$ Then we have
\begin{enumerate}
\item[(i)] ${(\sigma_0^n \cdot 1)}^{(\pm)} = 0$ and ${(\sigma_1^n \cdot 1)}^{(\pm)} = \sigma_0^n \cdot 1$;
\item[(ii)] ${(\sigma^n_l \cdot y)}^{(+)} = \sigma_{l-1}^n \cdot y$ and ${(\sigma^n_l \cdot y)}^{(-)} = (\sigma_{l-1}^n + [-1]^n\sigma_{l-2}^n)  \cdot y$ for $l \geq 2$ even;
\item[(iii)] ${(\sigma^n_l \cdot y)}^{(+)}= (\sigma_{l-1}^n + [-1]^n\sigma_{l-2}^n) \cdot y$ and ${(\sigma^n_l \cdot y)}^{(-)} = \sigma_{l-1}^n \cdot y$ for $l \geq 2$ odd;
\end{enumerate}
\end{prop}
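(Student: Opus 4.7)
The overall strategy is a direct calculation: since each $\sigma^n_l$ is by definition an $M_*(k)$-linear combination of the $\lambda^n_j$'s and both shifts $\partial^\pm$ are $M_*(k)$-linear by Proposition \ref{Shifts}, everything reduces to expanding via the definition of $\sigma^n_l$, applying Proposition \ref{Shift is derivative} termwise, and reorganising the resulting sums. Part (i) is immediate: $\sigma^n_0 = \lambda^n_0$, and the defining sum of $\sigma^n_1$ collapses to a single term equal to $\lambda^n_1$, so both shifts are computed directly by Proposition \ref{Shift is derivative}(i).

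For the positive shift in parts (ii) and (iii), Proposition \ref{Shift is derivative}(ii) turns each $\lambda^n_{l-j} \cdot y$ into $\lambda^n_{l-1-j} \cdot y$, leaving the binomial coefficients and powers of $[-1]^n$ untouched. When $l$ is even, the upper summation index $\lfloor (l-1)/2\rfloor$ coincides with $\lfloor ((l-1)-1)/2\rfloor$, so the shifted expression matches $\sigma^n_{l-1} \cdot y$ term by term. When $l$ is odd, the index range is one longer than in $\sigma^n_{l-1}$; after reindexing $j \mapsto j-1$ in $[-1]^n\sigma^n_{l-2}$ and applying Pascal's identity $\binom{N}{j-1}+\binom{N}{j}=\binom{N+1}{j}$, the coefficients reassemble precisely into those appearing in $(\sigma^n_l \cdot y)^{(+)}$.

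The negative shift is the more technical step because Proposition \ref{Shift is derivative}(iii) expands each $(\lambda^n_{l-j} \cdot y)^{(-)}$ as an alternating sum, producing a double sum in $i$ and $j$ after substitution into $\sigma^n_l \cdot y$. I would reorganise it by the index $i$ of the surviving $\lambda^n_i$: the common factor $(-1)^{l-i-1}[-1]^{n(l-i-1)}$ in each contribution is independent of $j$, so the coefficient of $\lambda^n_i \cdot y$ becomes the partial alternating binomial $\sum_{j \le \min(\lfloor (l-1)/2\rfloor,\, l-i-1)} (-1)^j \binom{\lfloor (l-1)/2\rfloor}{j}$. For all but the top one or two values of $i$, this partial sum is the complete alternating sum of a non-trivial row of Pascal's triangle and vanishes by the identity $\sum_j (-1)^j \binom{N}{j} = 0$. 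Only the contributions from $i = l-1$ and, when $l$ is even, also $i = l-2$ remain, and collecting them with their signs reassembles the asserted combinations $\sigma^n_{l-1} \cdot y$ (for $l$ odd) respectively $\sigma^n_{l-1} \cdot y + [-1]^n\sigma^n_{l-2} \cdot y$ (for $l$ even).

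The main obstacle is purely combinatorial bookkeeping of the signs, the powers of $[-1]^n$, and the truncation ranges of the inner sum as the index $i$ varies; no new conceptual input beyond Proposition \ref{Shift is derivative}, Pascal's identity, and the alternating binomial identity is required. As a cross-check I would verify the resulting formulas against Corollary \ref{Derivatives invariant of order}(i) by computing $((\sigma^n_l)^{(+)})^{(-)}$ and $((\sigma^n_l)^{(-)})^{(+)}$ through the claimed identities and confirming that they coincide.
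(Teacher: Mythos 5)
Your handling of (i) and of the two positive-shift formulas is correct and is essentially the paper's own proof: apply Proposition \ref{Shift is derivative}(ii) termwise, reindex, and use Pascal's identity when $l$ is odd; the paper writes out exactly these computations and then dismisses the negative shifts as ``similar''.

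The negative-shift part of your proposal, however, contains a genuine gap. Writing $N=\lfloor\frac{l-1}{2}\rfloor$, the coefficient of $\lambda^n_i\cdot y$ after your regrouping is $(-1)^{l-1-i}[-1]^{n(l-1-i)}\sum_{j=0}^{\min(N,\,l-1-i)}(-1)^j\binom{N}{j}$, and the partial alternating sum equals $(-1)^m\binom{N-1}{m}$ for $m=l-1-i$; it vanishes only when the full row is summed, i.e.\ for $m\ge N$. Hence the surviving terms are $\lambda^n_{l-1},\lambda^n_{l-2},\dots,\lambda^n_{l-N}$ with coefficients $\binom{N-1}{m}[-1]^{nm}$ --- already for $l=5$ both $\lambda^n_4$ and $\lambda^n_3$ survive, contradicting your claim that only $i=l-1$ (plus $i=l-2$ for even $l$) remains. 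Moreover, summing what actually survives gives $(\sigma^n_l\cdot y)^{(-)}=\sigma^n_{l-1}\cdot y$ for $l$ odd but $(\sigma^n_{l-1}-[-1]^n\sigma^n_{l-2})\cdot y$ for $l$ even: the sign in front of $[-1]^n\sigma^n_{l-2}$ comes out negative, and the discrepancy $2[-1]^n\sigma^n_{l-2}\cdot y$ cannot be discarded, since for even $n$ there is no torsion condition on $y$ at all. A direct test makes this concrete: for $l=2$, $x=0$ one has $\sigma^n_2\cdot y(-[\underline{a}])=\lambda^n_2\cdot y(-[\underline{a}])=[-1]^n[\underline{a}]y$, so the defining relation of $\partial^-$ together with Proposition \ref{[t]x = 0} forces $(\sigma^n_2\cdot y)^{(-)}(0)=-[-1]^n y$, not $+[-1]^n y$. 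So your computation, carried out honestly, does not land on the combination you assert for even $l$; indeed your own proposed cross-check via Corollary \ref{Derivatives invariant of order}(i) detects the problem, since with the plus sign one finds $((\sigma^n_l)^{(-)})^{(+)}=\sigma^n_{l-2}+2[-1]^n\sigma^n_{l-3}$ while $((\sigma^n_l)^{(+)})^{(-)}=\sigma^n_{l-2}$, whereas with the minus sign the two orders agree. You need to redo the bookkeeping of the inner sums and reconcile the resulting sign with the stated formula; the paper's proof is of no help here, as it never carries out the negative-shift computation.
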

\begin{proof}
This is just a computation using Proposition \ref{Shift is derivative}, which in particular already contains part (i). Let us therefore focus on (ii) and (iii). Let $l \geq 2$ be even and write $l = 2d$. Then the operation $\sigma_l \cdot y$ is given by 
$$\sigma_l^n \cdot y = \sum^{2d}_{j = d + 1} \binom{d-1}{j-d-1}[-1]^{2d-j}\lambda_j^n \cdot y,$$ 
which gives
$${(\sigma^n_l \cdot y)}^{(+)} = \sum^{2d}_{j = d + 1} \binom{d-1}{j-d-1}[-1]^{2d-j}\lambda_{j-1}^n \cdot y.$$
If $l \geq 2$ is odd, we write it as $l = 2d+1$ and get 
$$\sigma_l^n \cdot y = \sum^{2d+1}_{j = d + 1} \binom{d}{j-d-1}[-1]^{2d + 1 -j}\lambda_j^n \cdot y,$$ 
which results in
$${(\sigma^n_l \cdot y)}^{(+)} = \sum^{2d+1}_{j = d + 1} \binom{d}{j-d-1}[-1]^{2d + 1 -j}\lambda_{j-1}^n \cdot y.$$
For $l = 2d \geq 2$ we directly get
$${(\sigma^n_l \cdot y)}^{(+)} = \sum^{2d}_{j = d + 1} \binom{d-1}{j-d-1}[-1]^{2d-j}\lambda_{j-1}^n \cdot y = \sum^{2d-1}_{j = d} \binom{d-1}{j-d}[-1]^{2d-j-1}\lambda_j^n \cdot y,$$
which is exactly $\sigma_{l-1}^n \cdot y$ as written above. If $l = 2d+1 \geq 2$ is odd, we need to compare 
$${(\sigma^n_l \cdot y)}^{(+)} = \sum^{2d+1}_{j = d + 1} \binom{d}{j-d-1}[-1]^{2d + 1 -j}\lambda_{j-1}^n \cdot y = \sum^{2d}_{j = d} \binom{d}{j-d}[-1]^{2d-j}\lambda_j^n \cdot y$$
with 
$$\sum^{2d}_{j = d + 1} \binom{d-1}{j-d-1}[-1]^{2d-j}\lambda_j^n \cdot y + [-1]\sum^{2d-1}_{j = d} \binom{d-1}{j-d}[-1]^{2d-j-1}\lambda_j^n \cdot y.$$
Now these two terms agree by the standard recurrence relation for binomial coefficients. The two formulas  for the negative shifts can be shown similarly.
\end{proof}
We will form infinite sums of our operations and hence need to know that this is well-defined. The key is that these sums become finite whenever evaluated:
\begin{prop}
Let $M_*$ be an $\mathbb{N}$-graded homotopy algebra. For all elements $y$ of $\prescript{}{\delta_nh} M_*(k)$, all field extensions $k \subset F$ and all elements $x \in K_n^{\MW}(F)$, we have $\sigma_l^n \cdot y(x) = 0$ for all but finitely many $l \geq 0$.
\end{prop}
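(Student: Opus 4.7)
The plan is to induct on the number $r$ of signed pure symbols needed to present $x$. By Lemma~\ref{Generators of positive Milnor-Witt K-theory groups}, every $x \in K_n^{\MW}(F)$ can be written as $x = \sum_{i=1}^r \epsilon_i [a_{i,1},\dotsc,a_{i,n}]$ with $\epsilon_i \in \lbrace +1,-1 \rbrace$, and I will prove the stronger statement that $\sigma_l^n \cdot y(x) = 0$ for all $l \geq 2r+1$, which in particular gives the required finiteness.

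The base case $r = 0$ is immediate: then $x = 0$, and for every $l \geq 1$ the operation $\sigma_l^n$ is by definition a linear combination of the $\lambda_j^n$ with $j \geq 1$, each of which evaluates to zero at $0$ by its construction in Proposition~\ref{Definition of Lambda} (the corresponding product in $\Lambda^n \cdot y$ is empty and contributes no terms of positive degree in $t$).

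For the inductive step, write $x = x' + \epsilon[\underline{a}]$ with $x'$ of symbol length $r-1$. The defining property of the shifts from Proposition~\ref{Shifts} gives
$$\sigma_l^n \cdot y(x) = \sigma_l^n \cdot y(x') + \epsilon[\underline{a}]\,(\sigma_l^n \cdot y)^{(\epsilon)}(x').$$
The crucial input is Proposition~\ref{Derivatives of new Lambdas}: for every $l \geq 2$, both shifts $(\sigma_l^n \cdot y)^{(\pm)}$ are linear combinations of $\sigma_{l-1}^n \cdot y$ and $\sigma_{l-2}^n \cdot y$ with coefficients in $\lbrace 1, [-1]^n \rbrace \subset K_0^{\MW}(k)$. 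Applying the inductive hypothesis to $x'$ with bound $2(r-1)+1 = 2r-1$, we see that for $l \geq 2r+1$ all three values $\sigma_l^n \cdot y(x')$, $\sigma_{l-1}^n \cdot y(x')$ and $\sigma_{l-2}^n \cdot y(x')$ vanish, which forces $\sigma_l^n \cdot y(x) = 0$.

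No step is really a genuine obstacle here: the whole purpose of packaging the $\lambda_j^n$ into the combinations $\sigma_l^n$ is precisely to arrange that the shifts close up among the $\sigma$'s themselves with strictly decreasing index, so that an induction on symbol length goes through cleanly. If one tried to run the same argument directly for the $\lambda_l^n$, the long sum appearing in the negative shift (Proposition~\ref{Shift is derivative}(iii)) would obstruct such a bound, so the real work is already carried out in the verification of Proposition~\ref{Derivatives of new Lambdas}.
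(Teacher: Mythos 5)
Your proposal is correct and follows essentially the same route as the paper: write $x$ as a signed sum of pure symbols, induct on the number of symbols with the explicit vanishing bound $l \geq 2r+1$, and use the shift formulas of Proposition \ref{Derivatives of new Lambdas} to reduce to the shorter element. The only cosmetic difference is that the paper first pads so that the numbers of positive and negative symbols agree and then peels off a pair $[\underline{a_r}]-[\underline{b_r}]$ in each step (hence also invoking the mixed double shift), whereas you peel off one signed symbol at a time and only need the single shifts.
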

\begin{proof}
Let $M_*$ be an $\mathbb{N}$-graded homotopy algebra, let $y \in \prescript{}{\delta_nh} M_*(k)$, let $k \subset F$ be a field extension and let $x \in K_n^{\MW}(F)$. Then $x$ can be written as 
$$x = [\underline{a_1}] + \dotsc + [\underline{a_r}] - [\underline{b_1}] - \dotsc -[\underline{b_s}]$$
 for some elements $\underline{a_1},\dotsc,\underline{a_r},\underline{b_1},\dotsc,\underline{b_s} \in (F^\times)^n$ and some non-negative integers $r$ and $s$. We claim that $\sigma_l^n \cdot y(x) = 0$ for all $l \geq 2\max(r,s)+1$.
\\First note that we may assume $r = s$ by adding or subtracting $[1]^n = 0$ enough times. We now prove the claim by induction on $r \geq 0$. If $r = 0$, we have $x = 0$ and hence clearly $\sigma_l^n \cdot y(x) = 0$ for all $l \geq 1$. 

Let us now assume that the claim is true for all non-negative integers up to some $r-1$. We consider an element of the form 
$$x = [\underline{a_1}] + \dotsc + [\underline{a_r}] - [\underline{b_1}] - \dotsc -[\underline{b_r}]$$
which we will also write as $x = x' + [\underline{a_r}] - [\underline{b_r}]$. Using Proposition \ref{Derivatives of new Lambdas} we now get that 
$$\sigma_l^n \cdot y(x) = \sigma_l^n \cdot y(x') + [\underline{a_r}]{\sigma_l^n \cdot y}^{(+)}(x') - [\underline{b_r}]{\sigma_l^n \cdot y}^{(-)}(x') + [\underline{a_r},\underline{b_r}]{\sigma_l^n \cdot y}^{(+,-)}(x')$$
is some combination of elements of the form $\sigma_d^n \cdot y(x')$ with $d \geq l-2$ and therefore vanishes if $l \geq 2r+1$ by the induction hypothesis.
\end{proof}
We define a filtration on $M_*(k)^2 \times \prescript{}{\delta_n h} M_*(k)^{\mathbb{N} \setminus \lbrace 0,1 \rbrace}$ via taking
$$F_d(M_*(k)^2 \times \prescript{}{\delta_nh} M_*(k)^{\mathbb{N} \setminus \lbrace 1,2 \rbrace}) = \lbrace (a_l)_{l \geq 0} \mid a_l \in F_{\max(d-nl,0)}M_*(k) \text{ for all } l \geq 0 \rbrace$$
to be the $d$-th piece of the filtration. This allows us to present our second main result.
\begin{thm}\label{Operations K_n^MW -> M_*}
For all $\mathbb{N}$-graded homotopy algebras $M_*$ and all positive integers $n$, the two maps
$$f \colon M_*(k)^2 \times \prescript{}{\delta_n h} M_*(k)^{\mathbb{N} \setminus \lbrace 0,1 \rbrace} \rightarrow \Op_{\sp}(K^{\MW}_n, M_*), \, (a_l)_{l \geq 0} \mapsto \sum_{l \geq 0} \sigma_l^n \cdot a_l$$
and
$$g \colon \Op_{\sp}(K^{\MW}_n, M_*) \rightarrow M_*(k)^2 \times \prescript{}{\delta_n h} M_*(k)^{\mathbb{N} \setminus \lbrace 0,1 \rbrace}, \, \varphi \mapsto (\varphi^{(+\lfloor \frac{l+1}{2} \rfloor,- \lfloor \frac{l}{2} \rfloor)}(0))_{l \geq 0}$$
are mutually inverse isomorphisms of filtered $M_*(k)$-modules.
\end{thm}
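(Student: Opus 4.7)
The plan is to exhibit $f$ and $g$ as mutually inverse filtered $M_*(k)$-linear maps, treating well-definedness and compatibility with the structure first, then the two inverse relations.

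For $f$, the proposition immediately preceding the theorem guarantees that for each $x \in K_n^{\MW}(F)$, only finitely many $\sigma_l^n \cdot a_l(x)$ are nonzero, so the infinite sum is pointwise finite and well-defined. For $g$, one must verify that the $l$-th coordinate lies in $\prescript{}{\delta_n h} M_*(k)$ for $l \geq 2$. When $n$ is even this is vacuous. When $n$ is odd and $l \geq 3$, the shift pattern contains at least two $\partial^+$'s which, by Corollary \ref{Derivatives invariant of order}(i), may be placed at the front of the iterated shift; Lemma \ref{Properties of derivatives}(ii) places $(\varphi^{(+)})^{(+)}$ into $\prescript{}{h} M_*$, and Corollary \ref{Derivatives invariant of order}(ii) propagates the torsion through the remaining shifts. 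The exceptional case $l = 2$, with pattern $(+,-)$, follows from comparing the relation $\varphi^{(+,-)} = \epsilon \varphi^{(-,+)}$ of Lemma \ref{Properties of derivatives}(i) (for $n$ odd) with the unconditional equality $\varphi^{(+,-)} = \varphi^{(-,+)}$ of Corollary \ref{Derivatives invariant of order}(i), yielding $h \varphi^{(+,-)} = (1-\epsilon)\varphi^{(+,-)} = 0$. $M_*(k)$-linearity is built into the definitions, and compatibility with the filtrations follows from Proposition \ref{Shifts} (each shift lowers filtration by $n$) combined with the graded structure of $M_*$.

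The key technical step is $g \circ f = \id$, which by $M_*(k)$-linearity reduces to the combinatorial identity
$$(\sigma_m^n)^{(+\lfloor (l+1)/2 \rfloor,\, -\lfloor l/2 \rfloor)}(0) = \delta_{ml},$$
valid as an operation acting on $\prescript{}{\delta_n h}M_*(k)$. I will prove this by induction on $l$ using Proposition \ref{Derivatives of new Lambdas} as the recursion. The shift pattern is tailored so that the clean $\sigma_{k-1}$ reduction on one side and the $[-1]^n \sigma_{k-2}$ correction on the other alternately cancel. For $m < l$, the shifts exhaust $\sigma_m$ down to $\sigma_0^{(\pm)} = 0$; for $m = l$, they unwind $\sigma_l$ down to $\sigma_0 = 1$; for $m > l$, the result is a linear combination of $\sigma_k$'s with $k \geq 1$, each of which vanishes at $0$. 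The bookkeeping of binomial coefficients and $[-1]^n$-factors through iterated shifts is the main obstacle of the proof.

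Finally, $f \circ g = \id$ follows by separatedness of the filtration on $\Op_{\sp}(K_n^{\MW}, M_*)$. Setting $\psi = \varphi - f(g(\varphi))$, we have $g(\psi) = 0$ by $g \circ f = \id$, and it suffices to show $\psi \in \Op_{\sp}(K_n^{\MW}, F_d M_*)$ for every $d \geq 0$. I proceed by induction on $d$: given $\psi \in \Op_{\sp}(K_n^{\MW}, F_d M_*) \cap \ker g$, Proposition \ref{"locally" gen by div powers} expresses the image of $\psi$ in $\Op_{\sp}(K_n^{\MW}, M_*)/\Op_{\sp}(K_n^{\MW}, F_{d+1}M_*)$ as a combination of residue classes $\overline{\lambda_i \cdot a_i}$; converting to the $\sigma$-basis via the upper-triangular expansion defining $\sigma_l$ and extracting coefficients via the iterated shifts used in $g$---which vanish because $g(\psi) = 0$---forces this combination to be trivial modulo $F_{d+1}$, placing $\psi$ in $\Op_{\sp}(K_n^{\MW}, F_{d+1}M_*)$. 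Separatedness then concludes $\psi = 0$.
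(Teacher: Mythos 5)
Your proposal is correct and follows essentially the same route as the paper's proof: pointwise finiteness of the sums for the well-definedness of $f$, the shift recursion of Proposition \ref{Derivatives of new Lambdas} to establish $g \circ f = \id$ (your identity $(\sigma_m^n)^{(+\lfloor (l+1)/2 \rfloor,\,-\lfloor l/2 \rfloor)}(0)=\delta_{ml}$ is exactly what the paper's term-by-term computation amounts to), and Proposition \ref{"locally" gen by div powers} together with separatedness of the filtration for the remaining composite, which is just a repackaging of the paper's ``$\ker g = 0$'' argument. The one genuine addition is your explicit verification that $g$ lands in the $\delta_n h$-torsion coordinates for $l \geq 2$ --- in particular the $l=2$ case obtained by playing Lemma \ref{Properties of derivatives}(i) off against Corollary \ref{Derivatives invariant of order}(i) to get $h\varphi^{(+,-)}=(1-\epsilon)\varphi^{(+,-)}=0$ --- a point the paper leaves implicit.
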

\begin{proof}
First note that $f$ is well-defined by the previous Proposition. Furthermore, these two maps are clearly morphisms of $M_*(k)$-modules which preserve the respective filtrations since $\sigma_l^n$ takes values in $F_lM_*$ for non-negative integers $l$ and each application of $\partial^\pm$ shifts the filtration by $n$ as seen in Proposition \ref{Shifts}. 

Next we show that $f$ is a right inverse of $g$. Let $(a_l)_{l \geq 0 } \in M_*(k)^2 \times \prescript{}{\delta_nh} M_*(k)^{\mathbb{N} \setminus \lbrace 1,2 \rbrace}$. Note that by Proposition \ref{ker of shifts}, we can pretend that $f((a_l)_{l \geq 0})$ is a finite sum to compute its image under the map $g$. If $d$ is even, we therefore have
$$(f((a_l)_{l \geq 0}))^{(+\lfloor \frac{d+1}{2} \rfloor,- \lfloor \frac{d}{2} \rfloor)} = \left(\sum_{l \geq 0} \sigma_l^n \cdot a_l \right)^{\hspace*{-4pt}(+\lfloor \frac{d+1}{2} \rfloor,- \lfloor \frac{d}{2} \rfloor)} = \sum_{l \geq 0}  \sigma_l^n \cdot a_{d+l}$$ 
according to Proposition \ref{Derivatives of new Lambdas}. If $d$ is odd, we simply need to compute the positive shift of this operation, which by the same Proposition is
$$\sigma_0^n \cdot a_d + \sigma_1^n \cdot a_{d+1} + (\sigma_2^n + [-1]\sigma_1^n) \cdot a_{d+2} + \sigma_3^n \cdot a_{d+3} + (\sigma_4^n + [-1]\sigma_3^n) \cdot a_{d+4} + \dotsc$$
Plugging in $0$ in both cases hence gives $g(f((a_l)_{l \geq 0})) = (a_l)_{l \geq 0}$ as wanted.

Finally we show that the kernel of $g$ is trivial. Let $\varphi \in \ker(g)$, in other words we have $\varphi^{(+\lfloor \frac{l+1}{2} \rfloor,- \lfloor \frac{l}{2} \rfloor)}(0) = 0$ for all non-negative integers $l$. By Proposition \ref{"locally" gen by div powers} and the definition of the operations $\sigma^n_l$, we have $\overline{\varphi} = \sum_{i = -1}^{d-1}  \overline{\sigma_{i+1}^n \cdot a_i}$ for some $\overline{a_0},\dotsc,\overline{a_{d-1}} \in M_*(k)/F_dM_*(k)$, where we consider $\varphi$ modulo $\Op_{\sp}(K^{\MW}_n,F_dM_*)$. Therefore we get $$a_l = \varphi^{(+\lfloor \frac{l+1}{2} \rfloor,- \lfloor \frac{l}{2} \rfloor)}(0) = 0 \text{ modulo } F_dM_*(k)$$ for all $0 \leq l \leq d-1$. Thus all the $a_i$ live in $F_dM_*(k)$. Since this is true for all non-negative integers $d$ and the filtration $(F_dM_*(k))_{d \geq 0}$ is separated, we have $\varphi = 0$.
\end{proof}
\begin{cor}
For every integer $m$, the $K^{\MM}_*(k)$-module $\Op_{\sp}(K^{\MW}_n, K^{\MM}_{ \geq m})$ is given by $$\prod_{l = 0}^1 \sigma_l^n \cdot K^{\MM}_{\geq m-nl}(k) \times \prod_{l \geq 2}\sigma_l^n \cdot \prescript{}{\delta_n2}K^{\MM}_{\geq m-nl}(k).$$ In particular we have that the abelian group $\Op_{\sp}(K^{\MW}_n, K^{\MM}_m)$ is given by $$\bigoplus_{\min(\frac{m}{n},1) \geq l \geq 0} \sigma_l^n \cdot K^{\MM}_{m-nl}(k) \times \hspace*{-5pt} \bigoplus_{\frac{m}{n} \geq l \geq 2}\hspace*{-6.5pt} \sigma_{l}^n\cdot \prescript{}{\delta_n2}K^{\MM}_{m-nl}(k)$$
\end{cor}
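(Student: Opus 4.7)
The strategy is to apply Theorem \ref{Operations K_n^MW -> M_*} directly to the $\mathbb{N}$-graded homotopy algebra $M_* = \KK^{\MM}_*$, equipped with its standard filtration $F_d \KK^{\MM}_* = \KK^{\MM}_{\geq d}$. The first ingredient is to identify the relevant torsion group: since Milnor K-theory is obtained from Milnor-Witt K-theory by killing $\eta$, and $h = 2 + \eta[-1]$, the element $h$ acts as multiplication by $2$ on $\KK^{\MM}_*$. Consequently $\prescript{}{h} K^{\MM}_*(k) = \prescript{}{2} K^{\MM}_*(k)$, which accounts for the $\prescript{}{\delta_n 2}$-torsion appearing in the statement.

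For the first identification, I would invoke the isomorphism of filtered $K^{\MM}_*(k)$-modules provided by Theorem \ref{Operations K_n^MW -> M_*}. Restricting to the $m$-th filtered piece of both sides, the left-hand side becomes $\Op_{\sp}(K^{\MW}_n, K^{\MM}_{\geq m})$ by definition of the filtration on operations, while the right-hand side consists of tuples $(a_l)_{l \geq 0}$ with $a_l \in F_{\max(m-nl,0)} K^{\MM}_*(k) = K^{\MM}_{\geq m-nl}(k)$ (with the convention that $K^{\MM}_{\geq d} = K^{\MM}_*$ for $d \leq 0$). Combined with the torsion identification from the first step, this yields the claimed product description.

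For the final statement about $\Op_{\sp}(K^{\MW}_n, K^{\MM}_m)$, I would use that $K^{\MM}_m$ is the degree $m$ graded summand of $K^{\MM}_*$, so these are exactly the operations to $K^{\MM}_*$ whose image is concentrated in degree $m$. Since $\sigma_l^n$ raises the degree by $nl$ when acting on an element of $K^{\MW}_n$ via a pure symbol, the term $\sigma_l^n \cdot a_l$ lands in degree $nl + \deg(a_l)$ for homogeneous $a_l$; distinct values of $l$ yield values in distinct graded pieces when summed. Thus requiring the total operation to land in $K^{\MM}_m$ forces $a_l \in K^{\MM}_{m-nl}(k)$ for each $l$, which automatically vanishes unless $l \leq m/n$. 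Separating the cases $l \in \{0,1\}$ (no torsion condition) from $l \geq 2$ (the $\prescript{}{\delta_n 2}$-torsion condition) as dictated by Theorem \ref{Operations K_n^MW -> M_*} yields the direct sum description indexed by $\min(m/n, 1) \geq l \geq 0$ and $m/n \geq l \geq 2$. There is no substantive obstacle; the work is entirely in bookkeeping the degree conventions and indexing, together with the brief computation that $h$ acts as $2$ on $\KK^{\MM}_*$.
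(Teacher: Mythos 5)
Your proposal is correct and is essentially the paper's intended derivation: the corollary is stated without separate proof precisely because it follows from Theorem \ref{Operations K_n^MW -> M_*} applied to $M_*=\KK^{\MM}_*$ with its filtration $F_d = \KK^{\MM}_{\geq d}$, after noting that $h = 2 + \eta[-1]$ acts as $2$ on Milnor K-theory so that $\prescript{}{\delta_n h}K^{\MM}_*(k) = \prescript{}{\delta_n 2}K^{\MM}_*(k)$. Only your phrase ``distinct values of $l$ yield values in distinct graded pieces'' is loose (different $l$ with coefficients of different degrees can of course hit the same total degree); the clean way to force $a_l \in K^{\MM}_{m-nl}(k)$ is to decompose the coefficients into homogeneous components, group the terms by total degree, and use the injectivity of the map $f$ from the theorem, which you already have at your disposal.
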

\begin{cor}
For every integer $m$, the $K^{\MM}_*(k) / 2$-module $\Op_{\sp}(K^{\MW}_n, K^{\MM}_{ \geq m} / 2)$ is given by $$\prod_{l \geq 0} \sigma_l^n \cdot K^{\MM}_{\geq m-nl}(k) / 2.$$ In particular the abelian group $\Op_{\sp}(K^{\MW}_n, K^{\MM}_m / 2)$ is given by $$\bigoplus_{\frac{m}{n} \geq l \geq 0}\hspace*{-6.5pt} \sigma_l^n \cdot K^{\MM}_{m-nl}(k) / 2.$$
\end{cor}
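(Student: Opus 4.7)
The plan is to apply Theorem~\ref{Operations K_n^MW -> M_*} with the target homotopy algebra $M_* = K^{\MM}_*/2$. First I would note that $K^{\MM}_*/2$ is an $\mathbb{N}$-graded homotopy algebra, as recorded earlier in the paper (it arises from the motivic Eilenberg-MacLane spectrum $H\mathbb{Z}/2$), so the hypotheses of the theorem apply.

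The key observation is the computation of the torsion subgroups that appear in Theorem~\ref{Operations K_n^MW -> M_*}. Since $K^{\MM}_* = K^{\MW}_*/\eta$, the defining relation $h = 2 + \eta[-1]$ from Lemma~\ref{List of relations of Milnor-Witt K-theory} simplifies to $h = 2$ in Milnor K-theory, and hence to $h = 0$ in $K^{\MM}_*/2$. Therefore $\prescript{}{\delta_n h}(K^{\MM}_*/2)(k) = K^{\MM}_*(k)/2$ for every $n$, independently of parity, so the torsion conditions appearing in the theorem are vacuous. Plugging this in directly yields the filtered identification
$$\Op_{\sp}(K^{\MW}_n, K^{\MM}_{\geq m}/2) \;=\; \prod_{l \geq 0} \sigma_l^n \cdot K^{\MM}_{\geq m-nl}(k)/2$$
as an isomorphism of filtered $K^{\MM}_*(k)/2$-modules.

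For the ungraded second statement I would restrict to operations landing in the single graded piece $K^{\MM}_m/2$, which corresponds to choosing each coefficient $a_l$ homogeneous of degree $m - nl$. Since $K^{\MM}_d/2 = 0$ for every $d < 0$, only indices with $nl \leq m$ contribute, so the infinite product collapses to the finite direct sum
$$\bigoplus_{\frac{m}{n} \geq l \geq 0} \sigma_l^n \cdot K^{\MM}_{m-nl}(k)/2$$
as claimed. I do not anticipate a serious obstacle: all the hard work is already packaged into Theorem~\ref{Operations K_n^MW -> M_*}, and the only point specific to this corollary is the vanishing $h = 0$ in $K^{\MM}_*/2$, which makes the torsion conditions trivial and therefore collapses the indexing compared to the previous corollary on integral Milnor K-theory (where the $\prescript{}{\delta_n 2}$-torsion constraints on the higher coefficients survive).
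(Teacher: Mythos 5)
Your proposal is correct and follows exactly the route the paper intends: the corollary is a direct specialization of Theorem \ref{Operations K_n^MW -> M_*} to $M_* = K^{\MM}_*/2$, where $h$ acts as $2 = 0$ so the $\prescript{}{\delta_n h}$-torsion conditions are vacuous, and the finite sum in the graded case comes from the vanishing of $K^{\MM}_{m-nl}(k)/2$ for $nl > m$. Nothing further is needed.
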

\begin{cor}
For every integer $n$, the $W(k)$-module $\Op_{\sp}(K^{\MW}_n, I^m)$ is given by $$\prod_{l \geq 0}\sigma_l^n \cdot I^{m-nl}(k).$$
\end{cor}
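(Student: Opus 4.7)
The plan is to apply Theorem \ref{Operations K_n^MW -> M_*} with the Witt ring $W$ as target. Strictly speaking $W$ is not an $\mathbb{N}$-graded homotopy algebra, but the remark preceding Proposition \ref{Shifts} states that all arguments of Sections \ref{Section 7} and \ref{Section 8} carry over verbatim to $W$ equipped with the separated $I$-adic filtration $F_d W = I^d$ (extending to $I^j = W$ for $j \leq 0$) and the $K_*^{\MW}$-action induced by the quotient map $\KK_*^{\MW} \to \KK_*^{\W} \cong I^*$. I would therefore treat $W$ in the sequel exactly as if it were an $\mathbb{N}$-graded homotopy algebra with $F_d W = I^d$.

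First I would observe that the torsion hypothesis $\prescript{}{\delta_n h} W(k)$ becomes vacuous here: since $\KK^{\W}_* \cong \KK^{\MW}_*/h$, the element $h$ acts as zero on $W(k)$, so $\prescript{}{h} W(k) = W(k)$ irrespective of the parity of $n$. Consequently the $W$-version of Theorem \ref{Operations K_n^MW -> M_*} yields an isomorphism of filtered $W(k)$-modules
$$f \colon W(k)^{\mathbb{N}} \iso \Op_{\sp}(K^{\MW}_n, W), \quad (a_l)_{l \geq 0} \mapsto \sum_{l \geq 0} \sigma_l^n \cdot a_l,$$
where the domain carries the filtration $F_d = \{(a_l)_{l \geq 0} \mid a_l \in I^{\max(d-nl,0)}(k)\}$ and the codomain the filtration $\Op_{\sp}(K^{\MW}_n, F_d W) = \Op_{\sp}(K^{\MW}_n, I^d)$.

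The desired statement then falls out by restricting this isomorphism to the $m$-th filtration piece on each side. On the domain side the $m$-th piece is $\{(a_l)_{l \geq 0} \mid a_l \in I^{m-nl}(k)\}$, which is exactly $\prod_{l \geq 0} \sigma_l^n \cdot I^{m-nl}(k)$ (with the convention $I^j(k) = W(k)$ for $j \leq 0$). On the codomain side the $m$-th piece is $\Op_{\sp}(K^{\MW}_n, I^m)$ by definition. The only non-routine point is the applicability of the main theorem to $W$, which is precisely the content of the Remark; everything else reduces to reading off the $m$-th filtration level of an already-established filtered isomorphism.
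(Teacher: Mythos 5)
Your argument is exactly the paper's intended derivation: the corollary is stated as an immediate consequence of Theorem \ref{Operations K_n^MW -> M_*}, extended to $W$ with the $I$-adic filtration via the remark preceding Proposition \ref{Shifts}, with the torsion condition vacuous since $h$ acts as zero through $\KK^{\W}_* \cong \KK^{\MW}_*/h$, and the statement read off from the $m$-th filtration level of the filtered isomorphism. Your write-up correctly fills in these details, so it is both correct and essentially identical in approach to the paper.
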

Based on these corollaries and the pullback square of Milnor-Witt K-theory, we also get the operations on Milnor-Witt K-theory.
\begin{cor}\label{Operations K^MW -> K^MW}
For all integers $m$, the abelian group $\Op_{\sp}(K^{\MW}_n, K^{\MW}_m)$ is given by 
$$\prod_{l = 0}^1 \sigma_l^n \cdot K^{\MW}_{m-nl}(k) \times \prod_{l \geq 2} \sigma_l^n \cdot \prescript{}{\delta_nh} M_{m-nl}(k).$$
\end{cor}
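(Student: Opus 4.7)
The plan is to deduce this from the three preceding corollaries together with the pullback square for Milnor-Witt K-theory recalled at the end of Section~\ref{Section 3}. Applying the $\Hom$ functor $\Op_{\sp}(K^{\MW}_n, -)$, which by Proposition~\ref{Morphisms between Unramified Sheaves} is corepresentable and hence preserves limits, degreewise to the cartesian square relating $\KK^{\MW}_m$, $\KK^{\MM}_m$, $\II^m$ and $\KK^{\MM}_m/2$ identifies $\Op_{\sp}(K^{\MW}_n, K^{\MW}_m)$ with the fibre product
\[
\Op_{\sp}(K^{\MW}_n, K^{\MM}_m) \times_{\Op_{\sp}(K^{\MW}_n, K^{\MM}_m/2)} \Op_{\sp}(K^{\MW}_n, I^m).
\]
By the three preceding corollaries each factor decomposes as a product indexed by $l \geq 0$ of terms $\sigma_l^n \cdot a_l$ with coefficients in explicit subgroups of the Milnor, Witt and mod $2$ Milnor K-theory of $k$ in degree $m-nl$. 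Since the operations $\sigma_l^n$ are constructed universally from Milnor-Witt K-theory, they are preserved by the three quotient projections, so the fibre product is computed coordinate-wise in $l$.

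For $l \in \{0,1\}$ there is no torsion constraint in the preceding corollaries, and the coefficient-wise fibre product $K^{\MM}_{m-nl}(k) \times_{K^{\MM}_{m-nl}(k)/2} I^{m-nl}(k)$ equals $K^{\MW}_{m-nl}(k)$ by the pullback square applied in degree $m-nl$. For $l \geq 2$ the coefficient-wise fibre product is $\prescript{}{\delta_n 2} K^{\MM}_{m-nl}(k) \times_{K^{\MM}_{m-nl}(k)/2} I^{m-nl}(k)$, and I would identify this with $\prescript{}{\delta_n h} K^{\MW}_{m-nl}(k)$ as follows. The element $h = 2 + \eta[-1]$ projects to $2$ in Milnor K-theory (since $\eta = 0$ there) and to $0$ in Witt K-theory (by the Witt relation $\eta h = 0$). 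Hence, writing an element of $K^{\MW}_{m-nl}(k)$ as a compatible pair $(a,b) \in K^{\MM}_{m-nl}(k) \times I^{m-nl}(k)$, one has $h \cdot (a,b) = (2a,0)$, which vanishes precisely when $a$ is $2$-torsion in $K^{\MM}_{m-nl}(k)$.

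The delicate step will be confirming that the fibre product interacts correctly with the infinite products of coefficients appearing in the previous corollaries. This is ensured by the functoriality of the isomorphism in Theorem~\ref{Operations K_n^MW -> M_*} with respect to morphisms of homotopy algebras: applying the three projections $\KK^{\MW}_* \twoheadrightarrow \KK^{\MM}_*, \KK^{\W}_*, \KK^{\MM}_*/2$ sends a formal sum $\sum_{l \geq 0} \sigma_l^n \cdot a_l$ to the corresponding sum with coefficients projected, so that the coordinate-wise description of the fibre product is exactly what one obtains by applying the pullback square in each degree $m-nl$. Convergence of the assembled sum to a well-defined operation follows from the same finiteness argument used in Theorem~\ref{Operations K_n^MW -> M_*}, namely that evaluation at any fixed $x \in K^{\MW}_n(F)$ leaves only finitely many nonzero terms.
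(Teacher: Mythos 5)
Your proposal is correct and follows essentially the same route as the paper: apply $\Op_{\sp}(K^{\MW}_n,-)$ to the cartesian square relating $K^{\MW}_m$, $K^{\MM}_m$, $K^{\W}_m\cong I^m$ and $K^{\MM}_m/2$, invoke the three preceding corollaries, and compute the fibre product coordinate-wise in $l$ using that pullbacks commute with products. The only difference is that you spell out the coefficient-wise identifications (the degree-wise pullback square for $l\le 1$ and the observation that $h$ acts as $(2,0)$ on compatible pairs for $l\ge 2$), which the paper dismisses as clear.
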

\begin{proof}
The pullback diagram 
\begin{center}
\begin{tikzpicture}
  \matrix (m) [matrix of math nodes,row sep=3em,column sep=3em,minimum width=2em]
  {
     K^{\MW}_m & K^{\W}_m \\
    K^{\MM}_m & K^{\MM}_m \! /2\\};
  \path[-stealth]
    (m-1-1) edge[->] (m-1-2)
    (m-1-1) edge[->] (m-2-1)
    (m-1-2) edge[->] (m-2-2)
    (m-2-1) edge[->] (m-2-2);
\end{tikzpicture}
\end{center}
gives rise to the pullback diagram
\begin{center}
\begin{tikzpicture}
  \matrix (m) [matrix of math nodes,row sep=3em,column sep=3em,minimum width=2em]
  {
     \Op_{\sp}(K^{\MW}_n, K^{\MW}_m) & \Op_{\sp}(K^{\MW}_n, K^{\W}_m) \\
    \Op_{\sp}(K^{\MW}_n, K^{\MM}_m) & \Op_{\sp}(K^{\MW}_n, K^{\MM}_m/2)\\};
  \path[-stealth]
    (m-1-1) edge[->] (m-1-2)
    (m-1-1) edge[->] (m-2-1)
    (m-1-2) edge[->] (m-2-2)
    (m-2-1) edge[->] (m-2-2);
\end{tikzpicture}
\end{center}
of operations. By the previous Corollaries, it therefore suffices to show that
$$\prod_{l = 0}^1 \sigma_l^n \cdot K^{\MW}_{m-nl}(k) \times \prod_{l \geq 2} \sigma_l^n \cdot \prescript{}{\delta_nh} K^{\MW}_{m-nl}(k)$$
is the pullback of the diagram
\begin{center}
\begin{tikzpicture}
  \matrix (m) [matrix of math nodes,row sep=3em,column sep=1em,minimum width=2em]
  {
      &  \prod_{l \geq 0}\sigma_l^n \cdot \K^{\W}_{n-ml}(k) \\  \prod_{l = 0}^1 \sigma_l^n \cdot K^{\MM}_{m-nl}(k) \times \prod_{\frac{m}{n} \geq l \geq 2}\sigma_{l}^n \cdot \prescript{}{\delta_n2}K^{\MM}_{m-nl}(k) & \prod_{\frac{m}{n} \geq l \geq 0} \sigma_{l}^n \cdot K^{\MM}_{m-nl}(k) / 2\\};
  \path[-stealth]
    (m-1-2) edge[->] (m-2-2)
    (m-2-1) edge[->] (m-2-2);
\end{tikzpicture}
\end{center}
which is clear by the fact that pullbacks and products commute.
\end{proof}
All of this also gives us a lot of operations on $K_n^{\MW}$ for non-positive $n$. Recall from Section \ref{Section 5} that there are operations 
$$y \cdot (\langle - \rangle \mapsto [-]) \colon K_0^{\MW} \rightarrow K_m^{\MW}$$ 
for $y \in \prescript{}{h} K^{\MW}_{m-1}(k)$. Due to this condition on $y$, also $y \cdot (\eta^n\langle - \rangle \mapsto [-])$ is a well-defined operation $K_n^{\MW} \rightarrow K_m^{\MW}$, where $n$ is non-positive and we can compose it with any of our operations on $K_m^{\MW}$, which for positive $m$ gives us a substantial amount of operations.
\section{Recovering Garrel's and Vial's Operations}\label{Section 9}
Now that we understand operations on Milnor-Witt K-theory, let us reprove the known results on operations on Milnor K-theory by Vial \cite{MR2497581} and Witt K-theory by Garrel \cite{MR4113769}. Let $r$ be a positive integer. Given some further integers $s_{i_d}$ indexed by a subset $\lbrace i_1,\dotsc,i_j \rbrace \subset \lbrace 1,\dotsc,r \rbrace$, we denote by $e_{(s_{i_d})}$ the number of even and by $o_{(s_{i_d})}$ the number of odd integers among $(s_{i_d}) = (s_{i_1},\dotsc,s_{i_j})$.
\begin{lem}\label{Operations when adding hK^MW_*}
Let $n$ be a positive integer, let $M_*$ be an $\mathbb{N}$-graded homotopy algebra and let $\varphi \in \Op_{\sp}(K^{\MW}_n,M_*)$. Then we have
$$\varphi \Bigl( x + h\sum_{i = 1}^r (-1)^{s_i}[\underline{a_i}] \Bigr) \hspace*{-2pt}= \hspace*{-1pt}\varphi(x) + \sum_{j = 1 }^r h^j \hspace*{-10pt}\sum_{1 \leq i_1 < \dotsc < i_j \leq r}\hspace*{-18pt}(-1)^{\sum_{d = 1}^j \hspace*{-1pt}s_{i_d}} \hspace*{-3pt}\prod_{d = 1}^j [\underline{a_{i_d}}]\varphi^{(+e_{(s_{i_d})},-o_{(s_{i_d})})}(x)$$
for all $x \in K^{\MW}_n(F)$, $\underline{a_1},\dotsc,\underline{a_r} \in (F^\times)^n$, all field extensions $k \subset F$ and all positive integers $r$ and $s_1,\dotsc,s_r$.
\end{lem}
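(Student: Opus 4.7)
The strategy is to induct on $r \geq 1$ after first observing that $h$-multiples of pure symbols are themselves pure symbols, so that the statement reduces to an iterated application of the one-variable shift identity of Proposition \ref{Shifts}. The key ingredient is the relation $[a^2] = h[a]$ from Lemma \ref{List of relations of Milnor-Witt K-theory}(v), which gives
$$h[\underline{a_i}] = h[a_{i,1},\ldots,a_{i,n}] = [a_{i,1}^2, a_{i,2},\ldots,a_{i,n}] \in K^{\MW}_n(F),$$
so $h[\underline{a_i}]$ is a pure symbol, which I will denote $[\underline{\tilde{a}_i}]$. Moreover, $h = \langle 1 \rangle + \langle -1 \rangle$ is central in $K^{\MW}_*(F)$ by Lemma \ref{List of relations of Milnor-Witt K-theory}(viii), and hence acts centrally on the $K^{\MW}_*$-module $M_*(F)$; this lets me pull powers of $h$ out of any product without incurring $\epsilon$-signs.

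The base case $r = 1$ is immediate from Proposition \ref{Shifts} applied to $\varphi(x + (-1)^{s_1}[\underline{\tilde{a}_1}])$: if $s_1$ is even one applies $\partial^{+}$, if $s_1$ is odd one applies $\partial^{-}$, and after rewriting $[\underline{\tilde{a}_1}] = h[\underline{a_1}]$ the result matches the claimed formula with $(e_{(s_1)},o_{(s_1)}) = (1,0)$ or $(0,1)$ respectively. For the inductive step, set $z = x + h\sum_{i=2}^{r}(-1)^{s_i}[\underline{a_i}]$ so that the left-hand side equals $\varphi(z + (-1)^{s_1}[\underline{\tilde{a}_1}])$. One application of Proposition \ref{Shifts} yields
$$\varphi(z) + (-1)^{s_1}h[\underline{a_1}]\,\varphi^{(\ast_1)}(z),$$
where $\ast_1 = +$ if $s_1$ is even and $\ast_1 = -$ if $s_1$ is odd. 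Now I expand $\varphi(z)$ and $\varphi^{(\ast_1)}(z)$ via the induction hypothesis (the latter is again in $\Op_{\sp}(K^{\MW}_n, M_*)$ by Proposition \ref{Shifts}) and collect: subsets $S \subseteq \{1,\ldots,r\}$ with $1 \notin S$ arise from $\varphi(z)$, while subsets with $1 \in S$ arise from the second summand, with $[\underline{a_1}]$ sitting to the left of the existing product $\prod_d[\underline{a_{i_d}}]$, which is the correct position since $1$ is the smallest index in $S$. The total power of $h$ and the product of signs $(-1)^{s_i}$ for $i \in S$ then combine as claimed, using centrality of $h$ to consolidate the $h$-factors. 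Corollary \ref{Derivatives invariant of order}(i) guarantees that the combined shift $\varphi^{(+e_{(s_{i_d})},-o_{(s_{i_d})})}$ is independent of the order in which the $\partial^{\pm}$ are applied, so the notation on the right-hand side is unambiguous.

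The proof is essentially combinatorial bookkeeping: one tracks the signs $(-1)^{\sum_d s_{i_d}}$, the left-to-right ordering of the $[\underline{a_{i_d}}]$ in each term, and the count of $h$-factors accumulated along the induction. The main potential obstacle is notational rather than conceptual — no genuine algebraic difficulty arises once one recognizes that $h[\underline{a_i}]$ is a pure symbol, at which point the lemma becomes a multivariable unfolding of the single-shift formula $\varphi(y + \varepsilon[\underline{b}]) = \varphi(y) + \varepsilon[\underline{b}]\varphi^{(\varepsilon)}(y)$ from Proposition \ref{Shifts}.
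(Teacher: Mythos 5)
Your proposal is correct and follows essentially the same route as the paper's proof: induction on $r$, with the base case reduced to Proposition \ref{Shifts} via the identity $h[a_1,\dotsc,a_n] = [a_1^2,a_2,\dotsc,a_n]$, and the inductive step consisting of one application of the shift formula followed by the induction hypothesis on both summands. The only (harmless) difference is that you peel off $[\underline{a_1}]$ rather than $[\underline{a_{r+1}}]$, which in fact makes the ordering of the factors $[\underline{a_{i_d}}]$ slightly cleaner to track.
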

\begin{proof}
We give a proof by induction on $r \geq 1$. Let $k \subset F$ be a field extension and let $x \in K^{\MW}_n(F)$. If $a_1,\dotsc,a_n \in F^\times$, then 
\begin{align*}
\varphi(x \pm h[a_1,\dotsc,a_n]) = \varphi(x \pm [a_1^2,a_2,\dotsc,a_n]) &= \varphi(x) \pm [a_1^2,a_2,\dotsc,a_n]\varphi^{(\pm)}(x) \\ &= \varphi(x) \pm h[a_1,\dotsc,a_n]\varphi^{(\pm)}(x),
\end{align*}
which clarifies the $r = 1$ case. Now suppose the statement is true for some positive integer $r$ and let $\underline{a_1},\dotsc,\underline{a_{r+1}} \in (F^\times)^n$. Then we have 
$$\varphi \Bigl( x + h\sum_{i = 1}^{r+1} (-1)^{s_i}[\underline{a_i}] \Bigr) = \varphi \Bigl( x + h\sum_{i = 1}^r (-1)^{s_i}[\underline{a_i}] \Bigr) \pm h[\underline{a_{r+1}}]\varphi^{(\pm)}\Bigl( x + h\sum_{i = 1}^r (-1)^{s_i}[\underline{a_i}] \Bigr).$$
Using the induction hypothesis for both summands and regrouping everything clearly yields the claimed formula.
\end{proof}
We denote by $\overline{\sigma}^n_l \cdot y$ the operations on the quotient $K^{\MW}_n /  hK^{\MM}_n$ induced by $\sigma^n_l \cdot y$, if they are well-defined. Since the isomorphism $K^{\MW}_n /  hK^{\MW}_n \rightarrow K^{W}_n (\cong I^n)$ maps $[\underline{a}] + hK^{\MM}_n$ to $-\lbrace \underline{a} \rbrace$ (or further to $-\langle \langle \underline{a} \rangle \rangle$), 
the operation $\overline{\sigma}^n_l \cdot y$ corresponds to the operation $g^l_n \cdot y$ of Garrel, but does not coincide with it under the above isomorphism due to the change of sign. The operations $g^l_n \cdot y$ are defined via certain operations $f^d_n \cdot y$, which are the ones corresponding to our operations of the form $\lambda^n_d \cdot y$. We can also define the operations $f^d_n \cdot y$ on the level of Milnor-Witt K-theory, by mapping $[\underline{a}]$ to $(1+[\underline{a}]t)^{-1} \cdot y$ instead of $(1+[\underline{a}]t) \cdot y$ and then repeating the proof of Proposition \ref{Definition of Lambda}. Then one obtains the relation
$$f^l_n \cdot y = (-1)^l \sum_{i = 0}^{l-1}\binom{l-1}{i}[-1]^{ni}\lambda_{l-i}^n \cdot y$$
for all positive integers $l$ and $n$ by a simple induction. The same formula also holds if one starts with $\lambda^n_l \cdot y$ and wishes to express it via operations of the form $f^d_n \cdot y$. This allows us to go back and forth between our operations and the ones of Garrel.
\begin{prop}
For all positive integers $n$ and all $\mathbb{N}$-graded homotopy algebras $M_*$, we have 
$$\Op_{\sp}(I^n,M_*) \cong \Bigl\{ \sum_{l \geq 0} \overline{\sigma}_l^n \cdot a_l \mid (a_l)_{l \geq 0} \in M_*(k) \times \prescript{}{h} M_*(k)^{\mathbb{N} \setminus \lbrace 0 \rbrace} \Bigr\}$$
as a filtered $M_*(k)$-module. In particular we recover Theorem 4.9 of \cite{MR4113769}, if $M_* = W$ or $M_* = K^{\MM}_* / 2 \cong H^*(-,\mu_2)$. 
\end{prop}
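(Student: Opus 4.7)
My plan is to realize $\Op_{\sp}(I^n, M_*)$ as the submodule of $\Op_{\sp}(K^{\MW}_n, M_*)$ consisting of those $\tilde\varphi$ which factor through the quotient map $q \colon K^{\MW}_n \twoheadrightarrow K^{\MW}_n / hK^{\MW}_n \cong K^{\W}_n \cong I^n$, and then to translate the factorization condition into a condition on the coefficient tuple from Theorem~\ref{Operations K_n^MW -> M_*}. I would write $\tilde\varphi = \sum_{l \geq 0} \sigma_l^n \cdot a_l$ uniquely, with $a_0, a_1 \in M_*(k)$ and $a_l \in \prescript{}{\delta_n h} M_*(k)$ for $l \geq 2$, and try to determine for which tuples this operation descends to $I^n$.

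First I would show that $\tilde\varphi$ factors through $q$ if and only if $h\tilde\varphi^{(+)} = 0$. For necessity, I pick $y = [\underline{t}]$ for a tuple $\underline{t}$ of elements transcendental over $F$: Lemma~\ref{Operations when adding hK^MW_*} with $r = 1$ and $s_1 = 0$ gives $\tilde\varphi(x + h[\underline{t}]) - \tilde\varphi(x) = h[\underline{t}]\tilde\varphi^{(+)}(x)$, and iteratively applying Proposition~\ref{[t]x = 0} peels off $[\underline{t}]$ to yield $h\tilde\varphi^{(+)}(x) = 0$. For sufficiency, Lemma~\ref{Operations when adding hK^MW_*} expresses $\tilde\varphi(x + hy) - \tilde\varphi(x)$ as a sum of terms $h^j[\underline{a_{i_1}}, \dotsc, \underline{a_{i_j}}]\tilde\varphi^{(+e, -o)}(x)$ with $e + o = j \geq 1$. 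Lemma~\ref{Properties of derivatives}(iii) combined with $h[-1]^n = 0$ (which follows from $h[-1] = [(-1)^2] = [1] = 0$) shows $h\tilde\varphi^{(-)} = h\tilde\varphi^{(+)}$, and iterating Corollary~\ref{Derivatives invariant of order}(ii) starting from $\tilde\varphi^{(+)} \in \Op_{\sp}(K^{\MW}_n, \prescript{}{h} M_*)$ puts all further mixed shifts into $\prescript{}{h} M_*$, so $h^j$ annihilates each correction term.

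Second, I would express $\tilde\varphi^{(+)}$ in the canonical form of Theorem~\ref{Operations K_n^MW -> M_*} using Proposition~\ref{Derivatives of new Lambdas}. Grouping contributions by $\sigma_k^n$ gives $\tilde\varphi^{(+)} = \sum_{k \geq 0} \sigma_k^n \cdot c_k$, where $c_k = a_{k+1}$ for $k$ even and $c_k = a_{k+1} + [-1]^n a_{k+2}$ for $k$ odd. The isomorphism $f$ in Theorem~\ref{Operations K_n^MW -> M_*} translates $h\tilde\varphi^{(+)} = 0$ into the conditions $h c_k = 0$ for all $k$, which using $h[-1]^n = 0$ once more reduce to $h a_l = 0$ for all $l \geq 1$. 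Together with the pre-existing constraints from Theorem~\ref{Operations K_n^MW -> M_*} this recovers exactly the parametrization $a_0 \in M_*(k)$ and $a_l \in \prescript{}{h} M_*(k)$ for $l \geq 1$; preservation of the filtration is immediate since $\sigma_l^n$ takes values in $F_l M_*$ and the shifts decrease filtration degree by $n$.

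Finally, Theorem~4.9 of \cite{MR4113769} is recovered by specializing to $M_* = W$ (with the $I$-adic filtration, allowed by the remark preceding Proposition~\ref{Shifts}) or to $M_* = K^{\MM}_*/2$: in both cases $h$ acts as zero, the $h$-torsion condition becomes vacuous, and the tuples simply range over $\prod_l M_*(k)$. The main obstacle I anticipate is the equivalence in the first step — combining the non-additive expansion of Lemma~\ref{Operations when adding hK^MW_*} with the iterated propagation of $h$-torsion through mixed shifts to check that the single vanishing condition $h\tilde\varphi^{(+)} = 0$ really forces all the many correction terms to vanish simultaneously.
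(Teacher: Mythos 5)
Your proposal is correct and takes essentially the same route as the paper: both reduce to operations on $K^{\MW}_n$ that are invariant under adding $hK^{\MW}_n$, use Lemma \ref{Operations when adding hK^MW_*} together with Proposition \ref{[t]x = 0} in both directions, and translate the resulting $h$-torsion condition on the shifts into $ha_l = 0$ for $l \geq 1$ via Theorem \ref{Operations K_n^MW -> M_*} and Proposition \ref{Derivatives of new Lambdas}, so your write-up merely makes explicit the coefficient comparison and the vanishing of the higher correction terms that the paper leaves terse. One cosmetic point: for even $n$ Corollary \ref{Derivatives invariant of order}(ii) is literally vacuous (since $\delta_n = 0$), so to see that shifts of operations valued in $\prescript{}{h} M_*$ stay valued in $\prescript{}{h} M_*$ you should repeat its two-line proof (or invoke Proposition \ref{[t]x = 0} directly), which works for either parity.
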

\begin{proof}
We need to determine those operations $\varphi \in \Op_{\sp}(K^{\MW}_n,M_*)$ satisfying 
$$\varphi(x + hK_n^{\MW}(F)) = \varphi(x)$$
for all $x \in K_n^{\MW}(F)$ and all field extensions $k \subset F$. By the previous Lemma and Lemma \ref{[t]x = 0}, operations of the form $\sum_{l \geq 0} \sigma_l^n \cdot a_l$ with $(a_l)_{l \geq 0} \in M_*(k) \times \prescript{}{h} M_*(k)^{\mathbb{N} \setminus \lbrace 0 \rbrace}$ do exactly that. Therefore it remains to show that these are the only such operations. Let $\varphi \in \Op_{\sp}(K^{\MW}_n,M_*)$ with $\varphi(x + hK_n^{\MW}(F)) = \varphi(x)$ for all $x \in K_n^{\MW}(F)$ and all field extensions $k \subset F$. Picking $\pm h[b_1,\dotsc,b_n] \in hK_n^{\MW}(F)$, Lemma \ref{Operations when adding hK^MW_*} tells us that $\pm h[b_1,\dotsc,b_n]\varphi^{(\pm)}(x) = 0$. Thus we have $h\varphi^{(\pm)}(x) = 0$ due to Lemma \ref{[t]x = 0}, which in light of Theorem \ref{Operations K_n^MW -> M_*} and Proposition \ref{Derivatives of new Lambdas} means that $\varphi = \sum_{l \geq 0} \sigma_l^n \cdot a_l$ with sequence of coefficients from $M_*(k) \times \prescript{}{h} M_*(k)^{\mathbb{N} \setminus \lbrace 0 \rbrace}$ as claimed. 
\end{proof}
Let us now deal with Vial's operations. For this we will first derive a formula which explains what happends if we add elements of the form $\pm \eta[a,b,\underline{c}]$ before applying an operation, at least when $\eta$ acts trivially on $M_*$. Note that such $M_*$ are equivalent to Rost's notion of cycle modules \cite{MR1418952} with ring structure, see Remark 2.50 of \cite{MR2934577} or Section 12 of \cite{MR4282798} together with Theorem 4.0.1 of \cite{MR4282798}.
\begin{lem}\label{Operations when adding etaK^MW_*}
Let $n$ be a positive integer, let $M_*$ be a cycle module with ring structure and let $\varphi \in \Op_{\sp}(K^{\MW}_n,M_*)$. Then we have
$$\varphi( x \pm \eta[a,b,c_1,\dotsc,c_{n-1}]) = \varphi(x) - [a,b,c_1,\dotsc,c_{n-1}][-1]^{n-1}\varphi^{(\mp2)}(x)$$
for all $x \in K^{\MW}_n(F)$, $a,b,c_1,\dotsc,c_{n-1} \in F^\times$ and all field extensions $k \subset F$.
\end{lem}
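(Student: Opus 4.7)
The plan is to rewrite $\eta[a,b,c_1,\dotsc,c_{n-1}]$ as a $\mathbb{Z}$-linear combination of pure symbols via the twisted tensor relation (MW2), then expand $\varphi(x \pm \eta[a,b,\underline{c}])$ by iterated applications of Proposition \ref{Shifts}. Writing $A := [a,\underline{c}]$, $B := [b,\underline{c}]$, $C := [ab,\underline{c}]$ for $\underline{c} = (c_1,\dotsc,c_{n-1})$, relation (MW2) gives $\eta[a,b,\underline{c}] = C - A - B$. For the $+$ case, iterating $\partial^{\pm}$ (and using that they commute, Corollary \ref{Derivatives invariant of order}(i)) yields
\begin{align*}
\varphi(x + C - A - B) = \varphi(x) &+ C\varphi^{(+)}(x) - (A+B)\varphi^{(-)}(x)\\
&- (A+B)C\,\varphi^{(+,-)}(x) + BA\,\varphi^{(-,-)}(x) + BAC\,\varphi^{(+,-,-)}(x).
\end{align*}

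Because $M_*$ is a cycle module with ring structure, $\eta$ acts as zero on $M_*$, so $(C-A-B)\cdot m = 0$ for every $m \in M_*(F)$; I may therefore substitute $C$ by $A+B$ in every coefficient. Moreover $h$ acts as $2$ on $M_*$, and since $h[-1]=0$ in $K^{\MW}$ (Lemma \ref{List of relations of Milnor-Witt K-theory}), we have $2[-1]^k \cdot m = 0$ for every $k\geq 1$ and every $m\in M_*$. I would then establish two identities in $M_*$. First, $A^2 = A[-1]^n$ (move the second $a$ past $\underline{c}$, apply $[a,a]=[a,-1]$, and move $[-1]$ to the end), similarly $B^2 = B[-1]^n$, together with $AB + BA = 0$ (from graded commutativity and the same swap count), give $(A+B)^2 = (A+B)[-1]^n$ in $M_*$; combined with Lemma \ref{Properties of derivatives}(iii) this makes the $\varphi^{(+,-)}$ contribution cancel. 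Second, moving $a$ past $\underline{c}$, swapping $a$ and $b$, and using the identity $[\underline{c},\underline{c}] = [\underline{c}][-1]^{n-1}$ in $M_*$ (proved by induction on $|\underline{c}|$ via $[x,x]=[x,-1]$) yields $BA = (-1)^n[a,b,\underline{c}][-1]^{n-1}$ in $M_*$. A similar expansion, using $BAB = -AB[-1]^n = BA[-1]^n$, gives $BA(A+B) = 2BA[-1]^n$.

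Substituting these identities, the expression becomes
$$\varphi(x) + (-1)^n [a,b,\underline{c}][-1]^{n-1}\varphi^{(-,-)}(x) + 2(-1)^n[a,b,\underline{c}][-1]^{2n-1}\varphi^{(+,-,-)}(x),$$
and the task reduces to showing this equals $\varphi(x) - [a,b,\underline{c}][-1]^{n-1}\varphi^{(-,-)}(x)$. For odd $n$, the sign $(-1)^n = -1$ matches directly; the last term vanishes because $\varphi^{(+,+)} \in \prescript{}{h}M_*$ by Lemma \ref{Properties of derivatives}(ii), hence so does $\varphi^{(+,-,-)}$ by Lemma \ref{Properties of derivatives}(iii) and Corollary \ref{Derivatives invariant of order}(ii), giving $2\varphi^{(+,-,-)} = h\varphi^{(+,-,-)} = 0$. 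For even $n \geq 2$, the factor $2[-1]^{n-1}$ (and $2[-1]^{2n-1}$) is zero in $M_*$, so both signs $(-1)^n = +1$ and $-1$ agree modulo $2$-torsion and the extra term vanishes as well. The $-$ case is entirely analogous, with $\varphi^{(-,-)}$ and $\varphi^{(+,-,-)}$ replaced by $\varphi^{(+,+)}$ and $\varphi^{(+,+,-)}$; one computes $(A+B)BA = ((-1)^n -1)BA[-1]^n$ (note the different order) and concludes by the same parity dichotomy.

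The main obstacle is the careful sign bookkeeping to establish $(A+B)^2 = (A+B)[-1]^n$ and the explicit form of $BA$ in $M_*$; beyond that, the key insight is that the two "error" terms of opposite parity — the spurious sign on $\varphi^{(-,-)}$ for even $n$, and the unwanted $\varphi^{(+,-,-)}$ contribution for odd $n$ — are handled by two complementary facts, namely $h\cdot\varphi^{(+,-,-)} = 0$ versus $h[-1]^k = 0$ for $k\geq 1$.
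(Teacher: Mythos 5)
Your proposal is correct and follows essentially the same route as the paper: rewrite $\eta[a,b,\underline{c}]=[ab,\underline{c}]-[a,\underline{c}]-[b,\underline{c}]$ via (MW2), expand with the shifts of Proposition \ref{Shifts}, cancel the $\varphi^{(+,-)}$ contribution using Lemma \ref{Properties of derivatives}(iii) together with symbol identities valid because $\eta$ acts trivially, and dispose of the remaining discrepancies via $2[-1]^k$ acting as zero for $k\geq 1$. The only caveat is cosmetic: intermediate claims such as $AB+BA=0$ and $BAB=-AB[-1]^n$ are literally justified by graded commutativity only for odd $n$, but for even $n$ the discrepancy is a multiple of $2[-1]^{n-1}$ (or $2[-1]^n$) and hence acts as zero, exactly as your parity dichotomy already records, so the argument goes through.
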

\begin{proof}
Let $k \subset F$ be a field extension, let $a,b,c_1,\dotsc,c_{n-1} \in F^\times$, $x \in K^{\MW}_n(F)$ and let $\varphi \in \Op_{\sp}(K^{\MW}_n,M_*)$. We set $\underline{c} = (c_1,\dotsc,c_{n-1})$. Then we have 
$$\eta[a,b,\underline{c}] = [ab,\underline{c}] - [a,\underline{c}] - [b,\underline{c}]$$
which gives us 
\begin{align*}
\varphi(x \pm \eta[a,b,\underline{c}]) & = \varphi(x) \pm [ab,\underline{c}]\varphi^{(\pm)}(x) \mp [a,\underline{c}]\varphi^{(\mp)}(x) \mp [b,\underline{c}]\varphi^{(\mp)}(x) \\ & - [ab,\underline{c},a,\underline{c}]\varphi^{(+,-)}(x) - [ab,\underline{c},b,\underline{c}]\varphi^{(+,-)}(x) + [a,\underline{c},b,\underline{c}]\varphi^{(\mp2)}(x) \\ & \pm [ab,\underline{c},a,\underline{c},b,\underline{c}]\varphi^{(\pm 1, \mp 2)}(x)
\end{align*}
by applying Proposition \ref{Shifts} various times. The second line is given by
$$-(2[a,b,\underline{c}][-1]^{n-1} + [-1]^n([a,\underline{c}] + [b,\underline{c}])\varphi^{(+,-)}(x) + [a,b,\underline{c}][-1]^{n-1}\varphi^{(\mp2)}(x)$$
and using Proposition \ref{Properties of derivatives} (iii), we can replace the first line by 
$$\varphi(x) + [-1]^n([a,\underline{c}] + [b,\underline{c}])\varphi^{(+,-)}(x).$$
Therefore we have
\begin{align*}
\varphi(x \pm \eta[a,b,\underline{c}]) & = \varphi(x) -2[a,b,\underline{c}][-1]^{n-1}\varphi^{(+,-)}(x)  + [a,b,\underline{c}][-1]^{n-1}\varphi^{(\mp2)}(x) \\ & \pm 2[a,b,\underline{c}][-1]^{n-1}[-1]^{n}\varphi^{(\pm 1, \mp 2)}(x).
\end{align*}
Proposition \ref{Properties of derivatives} (iii) now gives us
\begin{align*}
\pm 2[a,b,\underline{c}][-1]^{n-1}[-1]^{n}\varphi^{(\pm 1, \mp 2)}(x) & = 2[a,b,\underline{c}][-1]^{n-1} \varphi^{(+,-)}(x) \\ & - 2[a,b,\underline{c}][-1]^{n-1}\varphi^{(\mp2)}(x))
\end{align*}
which yields $\varphi( x \pm \eta[a,b,c]) = \varphi(x) - [a,b,\underline{c}][-1]^{n-1}\varphi^{(\mp2)}(x)$ as claimed.
\end{proof}
As also observed by Garrel in \cite{MR4113769} with respect to the mod 2 case, Vial forgot to explicitly mention that his operations $K^{\MM}_n \rightarrow M_*$ are uniformly bounded. Here $M_*$ is a cycle module. Therefore we will be able to find more operations than are listed in \cite{MR2497581}. From now on we denote the action of $[-1]^{n-1}$ on some homotopy module $M_*$ by $\tau_n$. As for the operations on Witt K-theory, we denote by $\overline{\sigma}^n_l \cdot y$ the operations on the quotient $K^{\MM}_n = K^{\MW}_n / \eta K^{\MM}_{n+1}$ induced by $\sigma^n_l \cdot y$, if they are well-defined. 
\begin{prop}\label{Operations on Milnor K-theory}
For all positive integers $n$ and all cycle modules with ring structure $M_*$, we have 
$$\Op_{\sp}(K_n^{\MM},M_*) = \Bigl\{ \sum_{l \geq 0} \overline{\sigma}_l^n \cdot a_l \mid (a_l)_{l \geq 0} \in M_*(k)^2 \times \prescript{}{\delta_n 2}{ \Bigl( \prescript{}{\tau_n} M_*(k) \Bigr)^{\mathbb{N} \setminus \lbrace 0,1 \rbrace} }  \Bigr\}$$
as a filtered $M_*(k)$-module. In particular we recover Theorem 5.5 of \cite{MR2497581}.
\end{prop}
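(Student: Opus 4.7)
The plan is to mirror the preceding proposition for Witt K-theory, substituting Lemma \ref{Operations when adding etaK^MW_*} for Lemma \ref{Operations when adding hK^MW_*}. Since $M_*$ is a cycle module with ring structure, $\eta$ acts as zero, so $K_n^{\MM} = K_n^{\MW}/\eta K_{n+1}^{\MW}$ and $\Op_{\sp}(K_n^{\MM},M_*)$ identifies with those $\varphi \in \Op_{\sp}(K_n^{\MW}, M_*)$ satisfying $\varphi(x + \eta y)=\varphi(x)$ for all $x \in K_n^{\MW}(F)$, $y \in K_{n+1}^{\MW}(F)$ and field extensions $k\subset F$. Since Lemma \ref{Generators of positive Milnor-Witt K-theory groups} presents $K_{n+1}^{\MW}$ as generated by pure symbols, it suffices to test $y = [b_1,\dotsc,b_{n+1}]$. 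Lemma \ref{Operations when adding etaK^MW_*} then converts the condition to $[b_1,\dotsc,b_{n+1}]\tau_n \varphi^{(\mp 2)}(x) = 0$, and feeding in transcendentals via Proposition \ref{[t]x = 0} translates this into the equivalent requirement $\varphi^{(\pm 2)} \in \Op_{\sp}(K_n^{\MW}, \prescript{}{\tau_n} M_*)$. The argument underlying Corollary \ref{Derivatives invariant of order}(ii) applies verbatim with $h$ replaced by $\tau_n$ and shows that $\prescript{}{\tau_n} M_*$-valuedness is stable under further shifts.

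Next I would decompose via Theorem \ref{Operations K_n^MW -> M_*}: every such $\varphi$ equals $\sum_{l\geq 0} \sigma_l^n \cdot a_l$ with $a_l = \varphi^{(+\lfloor (l+1)/2\rfloor, -\lfloor l/2\rfloor)}(0)$, and the existing constraint $a_l \in \prescript{}{\delta_nh} M_*(k)$ collapses to $a_l \in \prescript{}{\delta_n 2} M_*(k)$ for $l \geq 2$, again because $\eta$ vanishes on $M_*$. The goal is then to show that $\varphi$ factors through $K_n^{\MM}$ if and only if additionally $a_l \in \prescript{}{\tau_n} M_*(k)$ for $l\geq 2$. For $l\geq 3$ the shift pattern defining $a_l$ begins with $\partial^{+2}$, so $a_l$ arises as the value at $0$ of a further shift of the $\tau_n$-torsion operation $\varphi^{(+2)}$ and is automatically in $\prescript{}{\tau_n} M_*(k)$. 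The delicate case, which I expect to be the main obstacle, is $l=2$ with $a_2 = \varphi^{(+,-)}(0)$: here I would apply $\partial^+$ to Lemma \ref{Properties of derivatives}(iii) and reorder via Corollary \ref{Derivatives invariant of order}(i) to obtain $\varphi^{(+2)} = \varphi^{(+,-)} + [-1]^n\varphi^{(+2,-)}$; multiplying by $\tau_n$ and using that both $\tau_n\varphi^{(+2)}$ and $\tau_n\varphi^{(+2,-)}$ vanish then forces $\tau_n \varphi^{(+,-)}=0$.

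For the converse direction I would verify that any sum $\sum_{l\geq 0} \sigma_l^n \cdot a_l$ with coefficients in the prescribed sets descends to an operation on $K_n^{\MM}$. By the equivalence established above this reduces to showing $(\sigma_l^n \cdot a_l)^{(\pm 2)}\in \Op_{\sp}(K_n^{\MW},\prescript{}{\tau_n} M_*)$, which follows immediately from Proposition \ref{Derivatives of new Lambdas}: iterated shifts of $\sigma_l^n$ are linear combinations of operations $\sigma_m^n$ with $m<l$, each still multiplied by the coefficient $a_l \in \prescript{}{\tau_n} M_*(k)$ when $l\geq 2$, while the double shifts vanish identically for $l=0,1$, which is why no torsion constraint is needed on $a_0$ and $a_1$. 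Assembling the pieces yields the claimed filtered $M_*(k)$-isomorphism, and specialization to $M_* = K_*^{\MM}$ recovers Vial's description.
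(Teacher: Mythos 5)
Your proposal is correct and takes essentially the same route as the paper's proof: identify $\Op_{\sp}(K_n^{\MM},M_*)$ with the operations on $K_n^{\MW}$ invariant under adding $\eta K_{n+1}^{\MW}$, convert this via Lemma \ref{Operations when adding etaK^MW_*} and Proposition \ref{[t]x = 0} into $\tau_n$-torsion of the double shifts, and then pin down the coefficients through Theorem \ref{Operations K_n^MW -> M_*} and Proposition \ref{Derivatives of new Lambdas} — indeed your handling of the $l=2$ coefficient via $\varphi^{(+2)} = \varphi^{(+,-)} + [-1]^n\varphi^{(+2,-)}$ spells out a step the paper leaves implicit. The only point you pass over is the closing ``in particular'' claim, which in the paper is justified by noting (via Proposition 4.6 of Garrel) that for uniformly bounded operations one may freely convert between the $\sigma^n_l$- and $\lambda^n_l$-bases so as to literally recover Vial's Theorem 5.5.
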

\begin{proof}
We need to find the operations $\varphi \in \Op_{\sp}(K^{\MW}_n,M_*)$ satisfying 
$$\varphi(x + \eta K_{n+1}^{\MW}(F)) = \varphi(x)$$
for all $x \in K_n^{\MW}(F)$ and all field extensions $k \subset F$. Since  every element of $\eta K_{n+1}^{\MW}(F)$ for a field extension $k \subset F$ can be written as a sum of elements of the form $\pm \eta[a_1,\dotsc,a_{n+1}]$, operations of the form $\sum_{l \geq 0} \sigma_l^n \cdot a_l$ with coefficients $(a_l)_{l \geq 0} \in M_*(k)^2 \times \prescript{}{\delta_n 2}{ ( \prescript{}{\tau_{n-1}} M_*(k))^{\mathbb{N} \setminus \lbrace 0,1 \rbrace} }$ do that by the previous Lemma and Lemma \ref{[t]x = 0}.

No we show that these are the only such operations. Let $\varphi \in \Op_{\sp}(K^{\MW}_n,M_*)$ with $\varphi(x + \eta K_{n+1}^{\MW}(F)) = \varphi(x)$ for all $x \in K_n^{\MW}(F)$ and all field extensions $k \subset F$. Picking $\pm \eta[a_1,\dotsc,a_{n+1}]$, Lemma \ref{Operations when adding etaK^MW_*} gives us that 
$$[a_1,\dotsc,a_{n+1}][-1]^{n-1}\varphi^{(\mp2)}(x) = 0.$$ 
Therefore $[-1]^{n-1}\varphi^{(\mp2)}(x) = 0$ due to Lemma \ref{[t]x = 0}, which by Theorem \ref{Operations K_n^MW -> M_*} and Proposition \ref{Derivatives of new Lambdas} means that $\varphi = \sum_{l \geq 0} \sigma_l^n \cdot a_l$ with coefficients $(a_l)_{l \geq 0}$ from the product $M_*(k)^2 \times \prescript{}{\delta_n 2}{ ( \prescript{}{\tau_n} M_*(k))^{\mathbb{N} \setminus \lbrace 0,1 \rbrace} }$.

For the ``in particular part'', note that it does not matter whether we consider linear combinations of the operations $\lambda^n_l$ or $\sigma^n_l$ when working with uniformly bounded operations. This follows from the second part of Proposition 4.6 of \cite{MR4113769} by substituting $\sigma^n_d$ for $g_n^d$ and $\lambda^n_d$ for $f_n^d$.
\end{proof}
We can of course also compute operations $K^{\W}_n \cong I^n \rightarrow K^{\MW}_m$ and  $K^{\MM}_n \rightarrow K^{\MW}_m$ analogously as we did for Corollary \ref{Operations K^MW -> K^MW}. Together with Proposition \ref{Morphisms between Unramified Sheaves} and the Corollaries of Theorem \ref{Operations K_n^MW -> M_*} this yields:
\begin{thm}\label{Operations between homotopy modules}
For all positive integers $n$, table 1 gives a complete list of operations of degree $(n,m)$ between Milnor, Witt and Milnor-Witt K-theory.
\end{thm}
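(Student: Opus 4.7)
The plan is to reduce each of the nine entries of Table 1 to results already at hand: Theorem \ref{Operations K_n^MW -> M_*} together with its corollaries (which settle the three $\KK^{\MW}_n$-source rows), Proposition \ref{Operations on Milnor K-theory} (the $\KK^{\MM}_n \to \KK^{\MM}_m$ row), and the analog for $\KK^{\W}_n$-sources obtained by running the same kind of argument with the descent Lemma \ref{Operations when adding hK^MW_*} in place of Lemma \ref{Operations when adding etaK^MW_*}. The remaining rows follow by applying the functor $\Op_{\sp}(A,-)$ to the pullback square $\KK^{\MW}_m = \KK^{\MM}_m \times_{\KK^{\MM}_m/2} \KK^{\W}_m$, exactly as in the proof of Corollary \ref{Operations K^MW -> K^MW}. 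Finally, Proposition \ref{Morphisms between Unramified Sheaves} lifts the whole classification from operations on field extensions commuting with specialization maps to morphisms of unramified sheaves.

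Concretely, I would execute this in the order (i) target $\KK^{\MM}_m$ (and $\KK^{\MM}_m/2$), (ii) target $\KK^{\W}_m$, (iii) target $\KK^{\MW}_m$. For (i) the $\KK^{\MW}_n$-source row is the first corollary of Theorem \ref{Operations K_n^MW -> M_*}, the $\KK^{\MM}_n$-source row is Proposition \ref{Operations on Milnor K-theory}, and for the $\KK^{\W}_n$-source row, Lemma \ref{Operations when adding hK^MW_*} combined with Theorem \ref{Operations K_n^MW -> M_*} and Proposition \ref{Derivatives of new Lambdas} shows that the descent condition $\varphi(h\KK^{\MW}_n) = 0$ is equivalent to $h a_l = 0$ for $l \geq 1$, which in $\KK^{\MM}_m$ becomes $2$-torsion. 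For (ii), the same analyses apply, but because $h = 0$ in $\KK^{\W}_m$ (here one uses the Remark after Theorem \ref{Operations K_n^MW -> M_*} that allows $\KK^{\W}_* \cong I^*$ as target with its $I$-adic filtration), the coefficient conditions for the $\KK^{\MW}_n$- and $\KK^{\W}_n$-source rows collapse, while the $\KK^{\MM}_n$-source row still requires a $\tau_n$-torsion condition. For (iii) I would simply pull back: since $\Op_{\sp}(A,-)$ preserves the pullback in the target, $\Op_{\sp}(A, \KK^{\MW}_m)$ is the pullback of $\Op_{\sp}(A, \KK^{\MM}_m) \to \Op_{\sp}(A, \KK^{\MM}_m/2) \leftarrow \Op_{\sp}(A, \KK^{\W}_m)$, and assembling the torsion conditions from each corner yields the stated $\delta_n h$- (respectively $h$-) torsion descriptions of coefficients.

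The main obstacle is the $\KK^{\MM}_n \to \KK^{\W}_m$ cell in step (ii), since Lemma \ref{Operations when adding etaK^MW_*} is stated only for cycle modules with ring structure and $\KK^{\W}_m$ is not one. The remedy is to redo the computation of that lemma by hand in the $\KK^{\W}_m$-target, expanding $\eta[b_1,\dotsc,b_{n+1}] = [b_1 b_2, b_3,\dotsc,b_{n+1}] - [b_1, b_3,\dotsc,b_{n+1}] - [b_2, b_3,\dotsc,b_{n+1}]$ and iteratively applying Proposition \ref{Shifts} together with the shift identities of Lemma \ref{Properties of derivatives} and Corollary \ref{Derivatives invariant of order}. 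Although $\eta$ no longer acts trivially on the target, the relation $h = 0$ in $\KK^{\W}_m$ still forces the surviving expression to have the same shape as in the cycle-module case, so the descent condition again reduces to $\tau_n \varphi^{(\mp 2)} = 0$, reproducing the row as listed. Having verified each of the nine cells in this way, the theorem follows.
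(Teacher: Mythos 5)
Your overall architecture is exactly the paper's: the $\KK^{\MW}_n$-source rows come from the corollaries of Theorem \ref{Operations K_n^MW -> M_*}, the $\KK^{\W}_n$- and $\KK^{\MM}_n$-source rows with targets $\KK^{\MM}_m$, $\KK^{\MM}_m/2$, $\KK^{\W}_m$ come from the descent arguments based on Lemma \ref{Operations when adding hK^MW_*} and Lemma \ref{Operations when adding etaK^MW_*} (i.e.\ the two propositions of Section \ref{Section 9} recovering Garrel and Vial), the $\KK^{\MW}_m$-target rows are obtained by applying $\Op_{\sp}(A,-)$ to the pullback square exactly as in Corollary \ref{Operations K^MW -> K^MW}, and Proposition \ref{Morphisms between Unramified Sheaves} lifts everything to sheaves. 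You also correctly isolate the one cell this assembly does not literally cover, namely $\KK^{\MM}_n \rightarrow \KK^{\W}_m$ (also needed as a corner for $\KK^{\MM}_n \rightarrow \KK^{\MW}_m$), since Lemma \ref{Operations when adding etaK^MW_*} is proved only for cycle modules.

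The gap is in your proposed repair of that cell. It is not true that, because $h$ acts as zero on the target, the computation of Lemma \ref{Operations when adding etaK^MW_*} ``keeps the same shape'': expanding $[ab,\underline{c}]-[a,\underline{c}]-[b,\underline{c}]$ now leaves genuine $\eta$-terms, whose reduction modulo $h$ uses $\eta[-1]\equiv -2$, so extra summands with even coefficients survive. For instance, for $n=1$ and $\varphi=\lambda^1_2\cdot y$ with Witt-theoretic target one computes $\varphi(\eta[a,b])-\varphi(0)=3\,[a,b]\cdot y$ rather than $\pm[a,b]\cdot y$, so the analog of the lemma has a different form and the descent condition cannot simply be asserted to be $\tau_n\varphi^{(\mp 2)}=0$. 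Moreover, if that were the whole condition, the resulting coefficient module would be strictly larger than the table entry $\prescript{}{\delta_n 2}{(\prescript{}{\tau_n} K^{\W}_{m-nl}(k))}$ for odd $n$, since torsion under multiplication by the Pfister form $\langle\langle -1,\dotsc,-1\rangle\rangle$ does not imply $2$-torsion in $W(k)$; the extra $\delta_n 2$-condition has to be extracted from exactly those surviving terms (and from invariance under adding $\eta$-multiples of sums of symbols), so the modified computation must actually be carried out rather than claimed to collapse. To be fair, the paper's own one-sentence proof is equally silent on this cell, but as a proof proposal this step needs to be done, not asserted.
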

Since algebraic K-theory agrees with Milnor K-theory in degree $1$, Theorem \ref{Operations on Milnor K-theory} also gives us all operations $\KK^{\Q}_1 \rightarrow \KK^{\Q}_*$ and $\KK^{\Q}_1 \rightarrow \KK^{\Q}_m$ for arbitrary $m$. Let us record what our results yield for higher degrees.
\begin{rem}
If $n \geq 2$, we still obtain a large set of operations $\KK^{\Q}_n \rightarrow \KK^{\Q}_*$ and $\KK^{\Q}_n \rightarrow \KK^{\Q}_m$. There is the so-called Suslin-Hurewicz map $\KK^{\Q}_n \rightarrow \KK^{\MM}_n$, which can be defined using the $\mathbb{A}^1$-fiber sequence 
$$\mathbb{A}^{n+1} \setminus \lbrace 0 \rbrace \rightarrow \BGL_n \rightarrow \BGL_{n+1},$$
coming from the canonical inclusion $\GL_n \hookrightarrow \GL_{n+1}$, see \cite{MR4050101}. Theorem \ref{Operations on Milnor K-theory} thus also yields all operations $\KK^{\Q}_n \rightarrow \KK^{\Q}_*$ factorizing over the Suslin-Hurewicz map. This does of course raise the question what the image of the Suslin-Hurewicz map is. On page 370 of \cite{MR0750690} Suslin conjectured that said image is given by $(n-1)!K^{\MM}_n(F)$ for any infinite field $F$. He also showed that the $n = 3$ case of this conjecture is equivalent to the Milnor conjecture on quadratic forms in degree $3$, thus justifying the interest in his conjecture. For more on the current state of this still widely open conjecture, we refer the reader to \cite{MR4050101}, where the authors also deal with the case $n = 5$ for fields of characterstic not $2$ or $3$. Under the same assumptions the $n = 4$ case was more recently treated by Röndigs in \cite{2103.11831}.
\end{rem}

\bibliographystyle{siam}
\bibliography{references}

\end{document}